\newcommand{\sslash}{\mathbin{/\mkern-6mu/}}
\begin{document}

\title{Partially wrapped Fukaya categories of simplicial skeleta}


\author[L. Katzarkov]{Ludmil Katzarkov}
\address{Department of Mathematics, University of Miami, Coral Gables, FL, 33146, USA 
	\indent National Research University Higher School of Economics, Moscow, Russia, 101000
	\indent Fakult\"at f\"ur Mathematik , Universit\"at Wien, 1090 Wien, Austria}
\email{lkatzark@math.uci.edu}


\author[G. Kerr]{Gabriel Kerr}
\address{Department of Mathematics, Kansas State University, Manhattan, KS, 66506, USA}
\email{gdkerr@math.ksu.edu}

\begin{abstract}
A class of partially wrapped Fukaya categories in $T^* N$ are proven to be well defined and then studied. In the case of $N$ diffeomorphic to $\mathbb{R}^m \times \mathbb{T}^n$, it is shown that these categories provide homological mirrors to equivariant and non-equivariant categories of coherent sheaves on toric varieties. This gives a new Floer theoretic formulation of mirror symmetry in the non-complete case.
\end{abstract}

\maketitle

\section{Introduction}

This paper defines and studies a partially wrapped Fukaya category associated to a Hamiltonian $H$ on an exact symplectic manifold $M$.  Outside of the Floer theoretic context, there have been topological and sheaf theoretic counterparts of such categories introduced in \cite{nadler16, stz}. In particular, the constructible sheaf approach has been well studied following \cite{fltz, nz}. For certain types of Fukaya categories, the topological version has been shown to be equivalent to a Fukaya category defined via Floer theory. For example, when the Lagrangian involved is perturbed infinitesimally at infinity as in \cite{nz}. However, when the perturbations at infinity are asymptotic to a wrapping Hamiltonian which is infinitesimal in certain directions and quadratic in others, the symplectic Fukaya category, known as a partially wrapped Fukaya category, has not been shown to be equivalent to their topological counterpart. There have been several recent works with symplectic formulations of such partially wrapped Fukaya categories both in the dimension $1$ case \cite{aaeko, hkk, lee} and in higher dimensions \cite{auroux, sylvan}.  This paper, in contrast to these referenced works, will consider defining the partially wrapped category directly as in the wrapped setting, but with a relaxed assumption on the properness of the wrapping Hamiltonian.  Attempting such a direct definition comes with a price, and several assumptions must be imposed in order to have a well defined category. Nevertheless, we obtain a general criterion for which a partially wrapped Fukaya category $\wrapped{M}{H}$ is well defined.

We then apply this construction to the case where $H$ arises from a generally non-compact Lagrangian skeleton $Z \subset M$ with well behaved singularities. Here the Hamiltonian $H$ is a smooth perturbation of the kinetic energy function (which is half the distance from $Z$ squared). The retraction $ \pi: M \to Z$ induced by the (metric) gradient of $H$ has smooth Lagrangian fibers over smooth points of $Z$. Thus we may consider the subcategory $\mathcal{W}_H^Z (M)$ of  $\mathcal{W}_H (M)$ generated by such Lagrangians. With a manageable generating collection  in hand, the structure of this category can be approached first using a local description and second by applying a sheaf theoretic framework. A more topological approach to the sheaf theory is given in \cite{gs16}.

To obtain the local description of $\mathcal{W}_H^Z (M)$,  the structure of the Lagrangian skeleton near a singularity $p \in Z$ is described by a simplicial complex $K$. This type of singularity is not as general as the arboreal singularities of \cite{nadler16}, but it nonetheless arises naturally for the examples we consider later in the paper. To such a singularity, we examine in detail the local partially wrapped category $\wrsm{K}$ which is modeled on the inverse image $U_p = \pi^{-1} (V_p)$ where $V_p$ is a neighborhood of $p$ in $Z$. This category can be fully characterized  by the combinatorial structure of $K$. To pass from $\wrsm{K}$ to the global description $\mathcal{W}_H^Z (M)$, one assembles these categories into a cosheaf on $Z$ whose global sections equal $\mathcal{W}_H^Z (M)$.

As an application of this machinery, we consider homological mirror symmetry for a smooth toric variety $X_\Sigma$. The key point to emphasize in this application is that the attribute ``projective'' is conspicuously absent. In particular, among our toric varieties will be affine varieties such as $\mathbb{A}^n$ and, more importantly, quasi-projective varieties such as $\mathbb{A}^n \backslash \{0\}$. Generally, homological mirror symmetry for toric varieties can generally be divided into two flavors. One half concerns the equivalence between the Fukaya category, or $A$-model, of $X_\Sigma$  and the category of matrix factorizations for a mirror superpotential; while the other half concerns a conjectural equivalence
\begin{align} \label{eq:hms}
D^b (X_\Sigma ) \cong \mathcal{F} (X_\Sigma^\vee ).
\end{align}
Here, the category $\mathcal{F} (X_\Sigma^\vee )$ takes a variety of forms, depending on the properties satisfied by $X_\Sigma$ and the researchers' tastes and goals (e.g. \cite{abouzaid09, ako,  fltz, givental, hv}). Using the construction of \cite{fltz}, we consider the mirror skeleton $Z$ in the cotangent bundle of a torus. This skeleton is the prototype for Lagrangian with  singularities dictated by a simplicial complex. We thus conclude with a proof that its partially wrapped Fukaya category $\mathcal{W}_H^{Z} ( U_\Sigma )$, where $U_\Sigma$ is a neighborhood of the skeleton $Z$, satisfies the equivalence of homological mirror symmetry in equation~\eqref{eq:hms}. 

As mentioned above, a major advantage of this approach over the afore mentioned proofs is that $X_\Sigma$ need not be complete in order for the equality to hold. In many such cases a Fukaya category mirror, which lies properly within the purview of symplectic geometry, is conspicuously absent. For example, the homological mirror of $X_\Sigma = \mathbb{A}^2 - \{0\}$ does not have a description as the Fukaya-Seidel category of a Landau-Ginzburg model potential. Thus, a consequence of this work is to establish a symplectic $A$-model mirror category to $D^b (X_\Sigma )$. 

More importantly, all smooth complete toric varieties are elementary examples of git quotients $X \sslash G$. Such quotients are obtained by choosing an equivariant line bundle $\mathcal{L}$ and taking the geometric quotient of the quasi-projective variety $X^{\tiny{ss}}$ of semi-stable points. In order to formulate homological mirror symmetry of such quotients, it is of great benefit do have $A$-model mirrors of all varieties involved $X$, $X^{\tiny{ss}}$ and $X \sslash G$. The partially wrapped perspective gives a single prescription for symplectic mirrors to each variety in this setting, opening the door to an investigation of mirrors symmetry of git quotients and their variations.

{\em Acknowledgements:} 
Support for the first author was provided by Laboratory of Mirror Symmetry NRU HSE, RF Government grant, ag. No. 14.641.31.0001 and by Simons Collaboration Award in Homological Mirror Symmetry. The second author was provided support by the Simons Collaboration Grant. We would like to thank Denis Auroux, Matthew Ballard, Colin Diemer, David Favero, Fabian Haiden, Paul Horja, Tony Pantev, Paul Seidel and Zach Sylvan for helpful comments and conversation during the preparation of this work. We especially express our appreciation for the insights and suggestions made by Maxim Kontsevich which heavily influenced this work.

\section{\label{sec:fukaya}Partially wrapped Fukaya categories}

In this section, we define a partially wrapped Fukaya category for a manifold with codimension $2$ corners. We impose basic conditions on our manifold so that the standard arguments of \cite{abouzaid12,as,fss} for compactness of moduli space can be imported to this setting.

\subsection{Preliminary definitions}
In this subsection we recall the essential definitions for a partial wrapped Fukaya category. We adopt many notations and conventions from \cite{as}, with a key difference being the lack of properness of the wrapping Hamiltonian. The central obstruction to extending the machinery of wrapped Fukaya categories to this setting is proving Gromov compactifications of the required moduli spaces exist. This is addressed by considering an auxiliary function, which yields a
maximum principle argument, in concert with a restriction on the type of Lagrangians we admit into our category. We begin by recalling the essential notation of Floer theory for wrapped categories.

Let $D$ be the unit disc with $(d + 1)$ marked points $\{\zeta^0,
\zeta^1, \ldots, \zeta^d\}$ oriented counter-clockwise. We will say that $S = D \backslash \{\zeta^0, \ldots, \zeta^d\}$ is a \textit{pointed disc} and write $\partial_k S$ for the component of the boundary between $\zeta^k$ and $\zeta^{k + 1}$. We assume a choice $\mathbf{\epsilon} = \{\epsilon^0, \ldots, \epsilon^d\}$ of strip-like ends has been made. These are holomorphic embeddings, $\epsilon^i : \mathbb{R}_{>0} \times [0,1] \to S$ for $i \ne 0$ while $\epsilon^0 : \mathbb{R}_{< 0} \times [0,1] \to S$,  $\epsilon^i ({\mathbb{R} \times \{0,1\} } ) \subset \partial S$
and $\lim_{s \to \pm \infty} \epsilon^i ( s, \cdot ) = \zeta^i$. We also choose a set of weights $\mathbf{w} = \{w^0, \ldots, w^d\} \subset \mathbb{Z}_{> 0}$ satisfying $w_0 = \sum_{i = 1}^d w_i$. Finally, we choose a $1$-form $\gamma \in \Omega^1 (S)$ which satisfies 
\begin{align}
\label{eq:gamma1} \tn \gamma & \leq 0 , \\
\label{eq:gamma2} \gamma|_{\partial S} & = 0, \\
\label{eq:gamma3} (\epsilon^i)^* (\gamma) & = w^i \tn t \text{ for } |s| \gg 0 .
\end{align}
The last condition makes the behavior of $\gamma$ rigid at the strip-like ends so that, relative to the parametrization
$\epsilon^i$, it is constant in the $t$ direction. The moduli space of conformal structures on $D$ with marked points will be denoted $\mathcal{R}^{d + 1}$ and its stable compactification, a manifold with corners, will be denoted $\overline{\mathcal{R}}^{d + 1}$. For $p \in \mathcal{R}^{d + 1}$ we denote by $j_p$ a complex structure on $D$ associated to $p$. 

We will assume that $1$-forms $\{\gamma_p\}_{p \in \mathcal{R}^{d + 1}}$ have been chosen  smoothly over these moduli spaces  so that they are compatible with the gluing and they satisfy conditions \eqref{eq:gamma1}, \eqref{eq:gamma2} and \eqref{eq:gamma3}.  We will furthermore assume that a global boundedness condition on our $1$-forms as follows. Use the standard symplectic structure on $D$ and the complex structure $j_p$ associated to $p \in \mathcal{R}^{d + 1}$ to obtain the metric $\| \|_{j_p}$ on $T D$. Then we assume there exists a bound $L$ such that 
\begin{align} \label{eq:gamma4} \|\gamma_p \|_{j_p} & \leq L , & \| \tn \gamma_p \|_{j_p} & \leq L ,
\end{align}
for all $p \in \mathcal{R}^{d + 1}$. 

For the target space, we take an exact symplectic manifold $M$ with exact $1$-form $\lambda$ and write $\mathcal{J} (M), \mathcal{J}^{int} (M), \mathcal{J}_\omega^c (M)$ and $\mathcal{J}_\omega^t (M)$ for the space of smooth, integrable, compatible and tame almost
complex structures, respectively. For $J \in \mathcal{J} (M)$, a smooth function $h : M \to \mathbb{R}$ is $J$-convex if the $2$-form 
\[ \omega_h = - \tn (\tn^c h ) \] is strictly positive. Here $\tn^c h = \tn h \circ J$ and positivity means $\omega_h (v, J v) > 0$ for all non-zero $v \in TM$. In particular, this implies that $\omega_h$ is a symplectic form and $J$ is tame with respect to $\omega_h$. We denote the induced ($J$-invariant) metric $ 1/2 (\omega_h (v_1, Jv_2) + \omega_h (v_2, J v_1))$ by $\left< v_1, v_2 \right>_h$ and the norm by $\| \|_h$. If $J$ is integrable then we generally have $J \in \mathcal{J}_{\omega_h}^c (M)$.  In more generality, the compatibility of $J$ with respect to $\omega_h$ is inextricably linked to a type of partial integrability of $J$ relative to $h$. This can be written precisely by considering the anti-symmetric part $\eta_h (v_1, v_2) =  1/2 (\omega_h (v_1, Jv_2) - \omega_h (v_2, J v_1))$  of $\omega_h (v_1, Jv_2)$ and computing
\begin{align} \label{eq:antisymmetric} \eta_h (v_1, v_2) = - \frac{1}{2} (J N_J (v_1, v_2)) (h)  \end{align}
where $N_J$ is the Nijenhuis tensor of $J$.

We equip $M$ with  $J \in \mathcal{J} (M)$ and fix a Hamiltonian $H : M \to \mathbb{R}$ which will later satisfy additional properties discussed in Section \ref{subsection:convexity}. For now, we require only the following. 

\begin{definition}
	We will say $(H,J)$ is \textit{partial convex data} for $M$ if 
	\begin{enumerate}
		\item $H$ is bounded below,
		\item $H$ is $J$-convex outside of a compact set in $M$,
		\item there exists $C > 0$ such that $- \tn^c H = \lambda$ for $H (m ) > C$, 
		\item the Liouville vector field $Z_h$ and its negative $-Z_h$ are complete.
	\end{enumerate}
\end{definition}
In the usual setting of convex symplectic manifolds, one assumes $H$ to be an exhaustion function, rendering the fourth condition unnecessary. The key distinction in our setup being that $h$ will generally have non-compact level and sublevel sets. 

To achieve transversality, we will need to perturb our partial convex data on compact subsets. Fixing a bounded below
Hamiltonian $H : M \to \mathbb{R}$, we consider continuous families $\mathbf{H} = \{(h_p , J_p) \}_{p \in X}$ where $X$ is a topological space.
\begin{definition} Let $H : M \to \mathbb{R}$ and $J \in \mathcal{J} (M)$. A continuous family $\{(h_p ,
	J_p) \}_{p \in X}$ of partial convex data has compact support relative to $(H, J)$ if 
	\[ H(m) - h_p (m) ,  \hspace{1cm} J (m) - J_p (m) \] 
	have compact support in $M \times X$. Denote by $\mathcal{H}_{X, H, J}$, or $\mathcal{H}_X$ if $H$ and $J$ are fixed, the space of all such families. 
\end{definition}

Having fixed a wrapping Hamiltonian function $H : M \to \mathbb{R}$, take a decorated pointed disc $(S, \mathbf{\epsilon}, \mathbf{w}, \gamma )$ with associated complex structure $j$ and consider a family $\mathbf{H}=\{(H_p, J_p)\} \in \mathcal{H}_S$. Pairing $\mathbf{H}$ with $\gamma$ gives perturbation data for Floer's equation.  More explicitly, let $E = M \times S$ be the trivial bundle over $S$ and
write $\pi_S $, $\pi_M$ for the projection of $E$ to $S$ and $M$ respectively. We employ Gromov's
trick and define an almost complex structure $I_{\mathbf{H}} \in \mathcal{J} (E)$. Specifically,
for any $(m, p) \in E$ and $(v, w) \in T_{(m,p)} E$ we define
\begin{equation} \label{eq:gromovtrick}
\left. I_{\mathbf{H}} \right|_{(m,p)} (v, w)= (J_p v -  \gamma_p (w) J_p X_{H_p} + \gamma_p (jw) X_{H_p} 
, j w),
\end{equation}
where $X_{H_p}$ is the Hamiltonian vector field associated to $H_p$. A section
$\sigma \in \Gamma (S, E)$ is then $I_\mathbf{H}$-holomorphic if it satisfies
$\overline{\partial}_{I_\mathbf{H}} \sigma = 0$. Projecting to $M$, we observe that $u :=
\pi_M \circ \sigma$ satisfies the equation
\begin{equation}
J_p \tn u (w) - \gamma_p (w) J_p X_{H_p} + \gamma_p (jw)  X_{H_p} = \tn u (j
w).
\end{equation}
This is equivalent to Floer's equation 
\begin{equation} \label{eq:Floer}
(\tn u - X_H \otimes \gamma )^{0, 1} = 0
\end{equation}
which is utilized for the Floer homology of $H$ and, more related to our purpose, wrapped Fukaya categories (see, for example, equation (3.18) of \cite{as}). In particular, counts of the solutions to Floer's equations are used to define the coefficients in the $A_\infty$-structure maps for the Fukaya category. Before discussing these maps, we must describe the boundary conditions for  equation \eqref{eq:Floer}, or equivalently, the objects in our category. Their existence depends on several topological prerequisites which $M$ must satisfy, as well as decorations associated to $M$. We now briefly recall these conditions, leaving a more detailed exposition to previously mentioned references.

We assume that $2 c_1 (M) = 0$ and choose a trivialization  $\Omega \in
\Gamma \left( \bigwedge_\mathbb{C}^n T^* M \right)^{\otimes 2}$. For a Lagrangian $L \subset M$, we will assume that its Maslov class $\mu_L$ vanishes so that there exists a lift $\grade_L : L \to \R$ of $\arg (\Omega (TL^{\otimes 2})) : L \to S^1$. By definition, the choice of lift  $\grade_L$ makes $L$ into a graded Lagrangian. For two graded Lagrangians $L_0, L_1$ intersecting transversely at $p$, we can assign 
\begin{equation*}\deg (p) = \left\lfloor \frac{1}{\pi} (\grade_{L_1}(p) - \grade_{L_0} (p)) \right\rfloor .
\end{equation*}
In addition, we assume $w_2 (L) = 0$ and choose a Pin structure $P^\#$ on $L$.
Again, given $p \in L_0 \cap L_1$, these choices determine a one-dimensional
real vector space $o_{p}$ (see \cite[Section 11]{seidel}). Using the
orientations of $o_p$, one can canonically define a vector space
$|o_p|_{\mathbb{K}}$ over any field $\mathbb{K}$ so that reversing orientation has the effect of multiplying by $-1$. This allows us to orient our moduli spaces and obtain a category over a field $\mathbb{K}$ for which $\textnormal{char} (\mathbb{K} ) \ne 2$. 


Given a Hamiltonian $H$, we associate an object of $\wrapped{M}{H}$ to each Lagrangian $L \subset M$ which satisfies the following properties
\begin{align}
\label{eq:lagpr1} & L \cap H^{-1}((-\infty, C]) \text{ is compact for any } C > 0, \\
\label{eq:lagpr2} & \lambda |_L = \tn f \text{ and } f|_{L \cap H^{-1} (C, \infty)} = 0 \text{ for some } C > 0.
\end{align}
The second condition is typical of general wrapped Fukaya
categories, while the first condition would be unnecessary had we assumed $H$ was proper. In the partial setting, this condition is very stringent and eliminates all Lagrangians on which $H$ is not exhaustive. A Lagrangian brane $L^\#$ relative to $H$ will be the triple $(L, \delta_L, P^\#)$ where $L$ is a  Lagrangian satisfying these properties with a grading $\delta_L$ and a Pin structure $P^\#$. We associate an object $L^\# \in \wrapped{M}{H}$ to each Lagrangian brane $L^\#$. 

Recall that the morphisms in the wrapped setting are obtained by formal sums of flow trajectories of $X_H$ from
one Lagrangian to another. Given $w \in \mathbb{Z}_{>0}$, we write
$\mathcal{X}_w$ to be the space of paths $x : [0, 1] \to M $ satisfying the flow equation $x^\prime (p) = w X_H (x(p))$. In other words, if $\phi^t : M \to M$ is the time $t$ flow of $X_H$, $x \in \mathcal{X}_w$ if and only if $x (t) = \phi^{wt} (m)$ for some $m \in M$.

A pair of objects $L_0^\#, L_1^\#$, we will be said to $H$-transverse if
$\phi^t (L_0)$ transversally intersects $L_1$ for all $t \in \R_{>0}$. Given such a pair, take 
\begin{align} \label{eq:defflow} \mathcal{X}_w (L_0, L_1) = \{x \in \mathcal{X}_w : x(i) \in
L_i \} \end{align}
and write $\deg (x)$ for the degree of the intersection $x(1) \in \psi^w
(L_0) \cap L_1$. Also, write  $|o_x|_{\mathbb{K}}$ for the
$\mathbb{K}$-normalized orientation space associated to $x(1)$. 
With these structures in place, we define morphisms as the graded vector space 
\begin{equation}
\text{Hom}^\bullet (L_0^\#, L_1^\#) = \lim_{w \to \infty}  \oplus_{x \in
	\mathcal{X}_w (L_0, L_1)} |o_x|_{\mathbb{K}} [-\deg (x)].
\end{equation}
Here the limit is taken with respect to the continuation map of
$H$. From this, we follow the prescription for the $A_\infty$-structure maps
given in \cite{as}. Namely, given $H$-transverse objects $L_0^\#, \ldots, L_d^\#$ and
a choice of weights $\mathbf{w}$ consider paths $\mathbf{x} := \{x^0, \ldots, x^d\}$ where $x^k \in \mathcal{X}_{w^k} (L_k, L_{k + 1})$. Recall the
definition of the moduli space $\mathcal{R}^{d + 1, \mathbf{w}} (\mathbf{x})$ of stable popsicle maps \cite[Section~3c]{as}. An element is a pair $(S, u)$ where $S$ is a decorated pointed
disc $(S, \mathbf{\epsilon}, \mathbf{w}, \gamma )$ and $u : S \to M$ satisfies 
\begin{align}
u (\partial_k S) & \subset L_k , \\
\label{eq:disc2} \lim_{s \to \pm \infty} u (\epsilon^k (s, \_)) & = x^k , \\
\bar{\partial}_I \sigma & = 0.
\end{align}
Here $\sigma$ is the unique section of $E$ defined by $u$ and $I$ is defined in
equation \eqref{eq:gromovtrick}.  The virtual dimension of $\mathcal{R}^{d + 1,
	\mathbf{w}} (\mathbf{x} )$ is $\text{vdim} (\mathcal{R}^{d + 1, \mathbf{w}}
(\mathbf{x} )) = d - 2 + \deg (x_0) - \sum_{k = 1}^d \deg (x_k)$. 

For each point $(S, u) \in \mathcal{R}^{d + 1,
	\mathbf{w}} (\mathbf{x} )$, we obtain an isomorphism
\begin{equation}
|o_{(S,u)}| : |o_{x^d}|_{\mathbb{K}} \otimes \cdots \otimes
|o_{x^1}|_{\mathbb{K}} \to |o_{x^0}|_{\mathbb{K}} .
\end{equation}
When $\text{vdim} (\mathcal{R}^{d + 1, \mathbf{w}} (\mathbf{x} )) = 0$, if Gromov compactness holds, we may sum together these contributions to obtain the map
\begin{equation}
\text{m}^{d, \mathbf{w}} = \sum_{(S,u)} |o_{(S,u)}|.
\end{equation}
These in turn are used to define the multiplication map
\begin{equation}
\mu^d  (x^d \otimes \cdots \otimes x^1) := \lim_{\mathbf{w} \to \infty} \oplus
\text{m}^{d, \mathbf{w}} (x^d \otimes \cdots \otimes x^1).
\end{equation}
The constructions reviewed above are now standard in the definition of wrapped
Fukaya categories, and the results on transversality and signs all carry over
without difficulty to the partially wrapped setting. However, for the
$A_\infty$-maps to be well defined and the $A_\infty$ quadratic relation to hold, one requires Gromov compactness theorems of the moduli spaces $\mathcal{R}^{d + 1, \mathbf{w}} (\mathbf{x})$. The following section establishes a criterion which will be used to verify this property.

\subsection{Partial convexity} \label{subsection:convexity}

Given a family $\mathbf{H} = \{(H_p, J_p)\}_{p \in S} \in \mathcal{H}_S$, the  almost complex structure ${I_\mathbf{H}}$ defined in equation \eqref{eq:gromovtrick} paves the way
for the next definition. Consider a partial compactification $\bar{E} = M \times D \subset M \times S$ obtained by
adding the marked points $\{\zeta_0, \ldots, \zeta_{d + 1} \}$. We will say that $\mathbf{H}$ extends to
$\bar{E}$ if it has a unique smooth extension. Under this assumption, for any $C \in \R$ we take 
\begin{equation} \bar{E}_{C} = \{(m, p) \in \bar{E} : p \in D , H_p (m) \leq C \}.
\end{equation}
\begin{definition} \label{def:convex}
	Given a decorated pointed disc $(S, \mathbf{\epsilon}, \mathbf{w}, \gamma )$, partial convex data $(H, J)$ and a family of partial convex data $\mathbf{H}$ with compact support relative to $(H,J)$ which extends to $\bar{E}$, let ${I_\mathbf{H}}$ be the
	associated almost complex structure on $\bar{E}$. A function $G : E \to
	\R$ will be called ${I_\mathbf{H}}$-convex if there exists a $C_G > 0$
	such that for every $C \in \R$,
	\begin{enumerate}[label=(\roman*), ref=\thetheorem(\roman*)]
		\item \label{def:convex:1} $G |_{\bar{E}_{C}}$ is proper,
		\item \label{def:convex:2} for $G > C_G$,
		\begin{equation} \omega_G (v, {I_\mathbf{H}}v) := \iota_{I_{\mathbf{H}}v} \iota_v ( - \tn (\tn G \circ {I_\mathbf{H}}
		)) \geq 0 .
		\end{equation}
	\end{enumerate}
	The constant $C_G$ will be called a sublevel bound for $G$.
\end{definition}

Given a function $g : M \to \mathbb{R}$ and a surface $S$, we say that $G : M \times  S \to \mathbb{R}$ is \textit{decomposable by} $g$ \textit{with bound} $B$ if $G = f \circ \pi_S + g \circ \pi_M$ for some subharmonic function $f :S \to \mathbb{R}$ and $\max \{f|_{S^1} \} = B$. 

\begin{definition}
	A function $g : M \to \mathbb{R}$ suppresses $H$ with bound $B$ if, for every $d \in \mathbb{N}$, $S \in \mathcal{R}^{d + 1}$ and every family $\mathbf{H} \in \mathcal{H}_{S, H, J}$ in a neighborhood of the constant family $\{(H, J)\}$, there exists a function $G : M \times S \to \mathbb{R}$, decomposable by $g$ with bound $B$, which is	$I_\mathbf{H}$-convex. We will say that $H$ is \textit{partially convex} with bound $B$ if it has a suppressing function $g$.
\end{definition}
If the bound $B$ is not relevant, we will simply call $H$ \textit{partially convex}. The reason we wish to find suppressing functions is twofold. First and foremost, these functions give a
maximum principle which constrains holomorphic discs to a compact region in $M$, thereby ensuring good compactifications
of their moduli spaces. Second, for large enough values, the sublevel sets of $I_\mathbf{H}$-convex functions
satisfy this same maximum principle. Coupling this to an argument on the flow of the Hamiltonian, we obtain functors
from partially wrapped categories of sublevel sets to that of the entire manifold $M$. To be precise about the type of maximum principle we obtain, we observe the following Lemma.

\begin{lemma} \label{lem:finalest} Suppose $g: M \to \mathbb{R}$ suppresses $H$ with bound $B$ and $u : S \to M$ satisfies Floer's equation \ref{eq:Floer} for a family of partial convex data $\mathbf{H}$ sufficiently close to the constant family, then \[ \| g \circ u \|_{L_\infty (S)} \leq \|(g \circ u)|_{\partial S} \|_{L_\infty (\partial S)} + 2B . \]
\end{lemma}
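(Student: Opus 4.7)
The approach I would take is Gromov's trick combined with the maximum principle for subharmonic functions on a Riemann surface. First, I would lift $u$ to the graph section $\sigma: S \to E = M \times S$, $\sigma(p) = (u(p), p)$. By the construction of $I_\mathbf{H}$ in \eqref{eq:gromovtrick}, $\sigma$ is $I_\mathbf{H}$-holomorphic precisely because $u$ solves Floer's equation \eqref{eq:Floer}. Because $g$ suppresses $H$ with bound $B$ and $\mathbf{H}$ lies in a neighborhood of the constant family where the suppressing condition applies, the hypothesis furnishes a function $G = f \circ \pi_S + g \circ \pi_M$ on $M \times S$ which is $I_\mathbf{H}$-convex with some sublevel bound $C_G$, where $f$ is subharmonic on $S$ with $\max_{\partial S} f = B$.

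Next, I would set $\phi := G \circ \sigma = g \circ u + f$ on $S$ and verify that it is subharmonic where it is large. The standard computation, using Definition~\ref{def:convex:2} together with the $I_\mathbf{H}$-holomorphicity of $\sigma$ — essentially $\sigma^*(-\tn(\tn G \circ I_\mathbf{H})) \geq 0$ on each complex line of $TS$ — gives $\Delta \phi \geq 0$ weakly on the open set $\Omega = \{\phi > C_G\} \subset S$. Applying the weak maximum principle to $\phi$ on $\Omega$, combined with Definition~\ref{def:convex:1} (properness of $G|_{\bar E_C}$) and the asymptotic convergence $u(\epsilon^k(s,\cdot)) \to x^k$ at each strip-like end — which together force $\phi$ to admit a continuous extension to $\bar D$ with controlled behavior at the punctures and prevent escape to infinity — I would obtain
\[ \sup_S \phi \leq \max\bigl(\sup_{\partial S} \phi,\, C_G\bigr). \]

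Finally, I would translate this into the desired bound on $g \circ u$. The classical max principle applied to the subharmonic $f$ with $\max_{\partial S} f = B$ gives $f \leq B$ throughout $S$, so $\sup_{\partial S} \phi \leq \sup_{\partial S}(g \circ u) + B$. Writing $g \circ u = \phi - f$ and invoking the symmetric bound $-f \leq B$ on $S$ (the factor $2B$ in the statement reflects precisely this two-sided control of $f$ afforded by the suppressing setup), we conclude
\[ \sup_S(g \circ u) \leq \sup_S \phi + B \leq \sup_{\partial S}(g \circ u) + 2B, \]
which yields the stated inequality.

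The hard part will be the second step: rigorously justifying the maximum principle on the non-compact domain $S = D \setminus \{\zeta^0, \ldots, \zeta^d\}$. Controlling $\phi$ near each puncture and along each strip-like end is essential, and this is exactly what Definition~\ref{def:convex:1} and the asymptotic convergence of $u$ are designed to supply: the former makes $\phi$ proper on each horizontal sublevel set of $\bar E$, while the latter extends $\sigma$ continuously to the partial compactification $\bar E$. Once both are in place, subharmonicity on $\Omega$ forces the supremum of $\phi$ to be attained either on $\partial S$ or on the locus $\{\phi = C_G\}$, completing the argument.
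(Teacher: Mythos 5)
Your argument is essentially the paper's: pass to the graph section $\sigma(p)=(u(p),p)$, observe it is $I_{\mathbf{H}}$-holomorphic because $u$ solves Floer's equation, deduce subharmonicity of $G\circ\sigma$ for the decomposable $I_{\mathbf{H}}$-convex $G=f\circ\pi_S+g\circ\pi_M$ supplied by the suppressing hypothesis, apply the maximum principle, and unwind using the two-sided bound $|f|\leq B$ to produce the $2B$. The one loose end is that your intermediate estimate $\sup_S\phi\leq\max\bigl(\sup_{\partial S}\phi,\,C_G\bigr)$ silently drops the $C_G$ alternative in your final chain of inequalities; the paper sidesteps this by treating $G\circ\sigma$ as subharmonic on all of $S$ (which holds for the suppressing functions actually produced by Lemma~\ref{lemma:Hconvex}, where positivity of $\omega_G$ is verified everywhere), so you should either invoke that global convexity or carry $C_G$ into the stated conclusion.
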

\begin{proof}
	Let $G : M \times S \to \mathbb{R}$ be an $I_{\mathbf{H}}$-convex function decomposable by $g$ with bound $B$ so that $G = f \circ \pi_S + g \circ \pi_M$. Note that since $f$ is subharmonic, it satisfies the maximum principle and $\|f\|_{L_\infty (S)} \leq B$ for all $p \in S$. Since $u$ satisfies equation \eqref{eq:Floer}, the section $s_u : S \to M \times S$ given by $s_u (p) = (u(p), p)$ is $I_{\mathbf{H}}$-holomorphic. Thus, by \cite[Lemma~9.2.9]{ms}, the composition $G \circ s_u$ is subharmonic. The maximum principle then gives 
	\begin{align*}\| g \circ u \|_{L_\infty (S)} & =  \| G \circ s_u - f   \|_{L_\infty (S)}, \\ & \leq \|G \circ s_u \|_{L_\infty (S)} + \|f\|_{L_\infty (S)}, \\ & \leq \|(G \circ s_u)|_{\partial S}\|_{L_\infty (\partial S)} + B,\\ & = \| (g \circ u)|_{\partial S} + f|_{\partial S} \|_{L_\infty (\partial S)} + B, \\ & \leq \| (g \circ u)|_{\partial S}\|_{L_\infty (\partial S)} + 2B. 
	\end{align*}
\end{proof}
The next technical lemma gives a criterion for showing that a given Hamiltonian is partially convex.

\begin{lemma}\label{lemma:Hconvex} Suppose $g : M \to \mathbb{R}$, and $J \in \mathcal{J} (M)$. If 
	\begin{enumerate}[label=(\roman*), ref=\thetheorem(\roman*)]
		\item \label{lemma:Hconvex:1} $g$ is proper on sublevel sets of $H$,  
		\item \label{lemma:Hconvex:2} $g$ is $J$-convex,
		\item \label{lemma:Hconvex:3} there exists $C_0 \geq 0$ such that \[ \max\{\mathcal{L}_{X_H} (g),  \mathcal{L}_{JX_H} ( g ) \}  \leq C_0 \]
		\item \label{lemma:Hconvex:4} there exist $C_1 > 0$ such that \[ \max \left\{ \|X_H\|_g , \| \iota_{X_H} \eta_g \|_g,  \|\mathcal{L}_{X_H} \tn g \|_g , \| \mathcal{L}_{X_H} \tn^c g\|_g \right\} \leq C_1  ,\]
	\end{enumerate}
	then  $g$ suppresses $H$ with bound \[
	C_1^2 L + C_0 L, \] where $L$ is the constant, independent of $g$, $H$ and $J$, from \eqref{eq:gamma4}.
\end{lemma}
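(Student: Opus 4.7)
The plan is to take $G(m, p) = g(m) + f(p)$ for a subharmonic $f : S \to \mathbb{R}$ to be chosen, with $\max\{f|_{S^1}\} = C_1^2 L + C_0 L$. Decomposability is built into this ansatz, and condition \ref{lemma:Hconvex:1} immediately yields the properness \ref{def:convex:1} of $G|_{\bar{E}_C}$ for every $C$, since $f$ extends to a bounded function on the closed disc $D$. Hence the substance of the proof lies in verifying the plurisubharmonicity \ref{def:convex:2} for $G$ above a suitable sublevel bound $C_G$.

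I would begin by computing $\tn^c G$ using the explicit formula \eqref{eq:gromovtrick} for $I_{\mathbf{H}}$. A direct calculation gives
\begin{equation*}
\tn^c G = \pi_M^* \tn^c g - (\mathcal{L}_{JX_{H_p}} g)\, \pi_S^* \gamma + (\mathcal{L}_{X_{H_p}} g)\, \pi_S^* (\gamma \circ j) + \pi_S^* \tn^c f,
\end{equation*}
so $\omega_G = -\tn\tn^c G$ paired at $((v, w), I_{\mathbf{H}}(v, w))$ decomposes as $\omega_g(v, Jv) + \omega_f(w, jw) + \mathcal{E}(v, w)$, where the first summand is non-negative by \ref{lemma:Hconvex:2} (and equals $\|v\|_g^2$), the second is non-negative provided $f$ is subharmonic, and $\mathcal{E}$ collects the cross terms recording the $X_H$-twisting in \eqref{eq:gromovtrick}. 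I would then expand $\mathcal{E}$ via Leibniz and the identity $\tn\tn^c g = -\omega_g$, using \eqref{eq:antisymmetric} to convert the antisymmetric part of $\omega_g(\,\cdot\,, J\,\cdot\,)$ -- arising from the non-integrability of $J$ -- into the controlled quantity $\iota_{X_H}\eta_g$.

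Each summand of $\mathcal{E}$ can then be estimated using \ref{lemma:Hconvex:3}, \ref{lemma:Hconvex:4}, together with $\|\gamma\|, \|\tn\gamma\| \leq L$ from \eqref{eq:gamma4}. The summands split into (i) those bounded by a constant multiple of $|w|^2$, with coefficient controlled by $C_1^2 \|\gamma\|^2$ (from double twists in $X_H$) or $C_0 \|\tn\gamma\|$ (from $(\mathcal{L}_{JX_H}g)\tn\gamma$), and (ii) mixed terms bounded by $C_1 \|\gamma\|\cdot\|v\|_g\cdot |w|$; the latter are absorbed via Young's inequality into a fraction of $\omega_g(v, Jv) = \|v\|_g^2$ plus an additional $|w|^2$-contribution. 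The upshot is an estimate
\begin{equation*}
\omega_G((v, w), I_{\mathbf{H}}(v, w)) \geq \omega_f(w, jw) - A(p)\,|w|^2,
\end{equation*}
with $A : \bar{S} \to \mathbb{R}_{\geq 0}$ continuous, uniform in $m$, and bounded by $C_1^2 L + C_0 L$. I would construct $f$ explicitly: on each strip-like end, where $\gamma = w^i \tn t$, take the parabolic model $f(s, t) = \tfrac{A_i}{2}(t^2 - t)$, which is subharmonic with $\Delta f = A_i$ and vanishes on the strip boundary; glue this with a Dirichlet solution of $\Delta f = A$ on the compact core with boundary value $C_1^2 L + C_0 L$ on the outer arcs, yielding a global subharmonic $f$ with the prescribed boundary max. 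For $\mathbf{H}$ in a neighborhood of the constant family, the perturbations are compactly supported; taking $C_G$ to exceed $\sup G$ over this support (finite by \ref{lemma:Hconvex:1} and boundedness of $f$) then ensures the estimate persists wherever $G > C_G$.

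The main obstacle is the detailed algebraic expansion of $\mathcal{E}$: identifying every summand as a product of one quantity bounded by $C_0$ or $C_1$ in \ref{lemma:Hconvex:3}, \ref{lemma:Hconvex:4} with $\gamma$ or $\tn\gamma$ bounded by $L$. The role of \eqref{eq:antisymmetric} is essential here -- without it, contributions involving the Nijenhuis tensor would not be controlled. Counting constants carefully through Young's inequality so that the final bound lands exactly at $C_1^2 L + C_0 L$, in concert with the strip-like-end structure of $\gamma$, is the central technical computation.
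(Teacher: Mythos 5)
Your proposal is correct and follows essentially the same route as the paper: write $G = g\circ\pi_M + f\circ\pi_S$, expand $\omega_G$ via the formula \eqref{eq:gromovtrick} for $I_{\mathbf{H}}$, use \eqref{eq:antisymmetric} together with hypotheses \ref{lemma:Hconvex:3} and \ref{lemma:Hconvex:4} to bound the cross and twisting terms by multiples of $C_1\|\gamma\|_S$, $C_1^2\|\gamma\|_S^2$ and $C_0\|\tn\gamma\|_S$, and then choose $f$ subharmonic with Laplacian large enough to dominate; the paper absorbs the mixed term by checking the discriminant of the quadratic form $q(x,y)=ax^2+2bxy+cy^2$ rather than by Young's inequality, and simply takes $f = B(s^2+t^2)$ rather than your glued strip model, but these are equivalent devices. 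Your worry about landing exactly on $C_1^2L + C_0L$ is warranted only insofar as the paper's own computation produces $17C_1^2\|\gamma\|_S^2 + C_0(\|\tn\gamma\|_S+\|\tn^c\gamma\|_S)$, so the stated bound is already loose up to such absolute constants.
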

In practice, we will be working with integrable complex structures. In these cases, verifying the bound involving $\| \iota_{X_H} \eta_g \|_g = 0$ is unnecessary. 
\begin{proof}
	Let $(S, \mathbf{\epsilon}, \mathbf{w}, \gamma )$ be a decorated pointed disc and $G =f \circ \pi_S + g \circ \pi_M$
	for some function $f : S \to \mathbb{R}$. Let $\mathbf{H}$ be the trivial family of partial convex data relative to $H$ and $I_\mathbf{H}$ the associated almost complex structure.  We compute  
	\begin{align*} 
	\begin{split}
	\omega_G  & =  -\tn(\tn G \circ I_\mathbf{H}) \\ & = -\tn( \tn g \circ J - (\tn g (J X_H)) \gamma + (\tn
	g (X_H)) \gamma \circ j + \tn f \circ j ), \\
	& =  -\tn( \tn g \circ J) + (\tn g (J X_H)) \tn \gamma + \tn (\tn g (J X_H)) \wedge \gamma - \tn (\tn g (X_H)) \wedge
	(\gamma \circ j) \\ &  \hspace{1cm}
	- \tn g (X_H)  \tn (\gamma \circ j) - \tn (\tn f \circ j),  \\ 
	& =  \omega_g^M  + \tn (\tn g (J X_H)) \wedge \gamma - \tn (\tn g (X_H))+ \tn g (J X_H ) \tn \gamma \wedge
	(\gamma \circ j) \\ &  \hspace{1cm}  - \tn g (X_H) \tn (\gamma \circ j) + \omega_f^S.
	\end{split}
	\end{align*}
	Here we write $\omega_g^M$ and $\omega_f^S$ for $\pi_M^* \omega_g$ and $\pi_S^* \omega_f$ respectively. To shorten the notation, we also write $\tau_1 = \tn (\tn g (JX_H))$ and $\tau_2 = \tn (\tn g (X_H))$ to obtain
	\begin{align} \label{eq:longform} \omega_G = \omega_g^M  + \tau_1 \wedge \gamma - \tau_2 \wedge
	(\gamma \circ j) + \tn g (J X_H ) \tn \gamma  - \tn g (X_H)  \tn^c \gamma +  \omega_f^S. \end{align}
	Basic Cartan calculus gives us $\tau_1 = \mathcal{L}_{X_H} \tn^c g + \iota_{X_H} \omega_g$ and $\tau_2 = - \mathcal{L}_{X_H} \tn g$.

	Let $v \in T_{m} M $ and $w \in T_p S$. Applying property \ref{lemma:Hconvex:4} gives 
	\begin{align} \label{eq:t2b}  | \tau_2 (v) | \leq C_1\|v \|_g . \end{align} Using equation \eqref{eq:antisymmetric},  $\omega_g (v_1,J v_2) = \left< v_1, v_2\right>_g + \eta_J (v_1,v_2)$, and applying Cauchy-Schwarz along with property \ref{lemma:Hconvex:4} gives
	\begin{align} \label{eq:omxjx} |\omega_g (X_H , v)| & \leq 2 C_1 \|v\|_g, & |\omega_g (J X_H , v)| & \leq 2 C_1 \|v\|_g. \end{align}  Thus we also have
	\begin{align} \label{eq:t1b} | \tau_1 (v) | \leq 3 C_1 \| v \|_g .\end{align}
	
	Now, for $x,y \in \mathbb{R}$ let $u = (xv,yw) \in T_{(m,p)} (M \times S)$ and define the quadratic form 
	\[\omega_G (u, I_{\mathbf{H}} u) = q (x,y) = a x^2 + 2 b x y + c y^2 . \]
	Using equations \eqref{eq:gromovtrick} and \eqref{eq:longform},  one observes that $a = \omega_g^M ( v , J v)$  which is positive by property \ref{lemma:Hconvex:2} and equals $\| v\|^2_g$. One also computes the cross term $2 b$ to be
	\begin{align*} 2b & = - \gamma (w) \omega_g (v, J X_H ) + \gamma (jw) \omega_g (v, X_H) + \tau_1 (v) \gamma ( jw ) , \\ & \hspace{1cm}  - \tau_1 (Jv) \gamma (w) + \tau_2 (v) \gamma (w) + \tau_2 (Jv) \gamma (jw).
	\end{align*}
	Using the inequalities \eqref{eq:t1b}, \eqref{eq:omxjx}, \eqref{eq:t2b}, and taking the standard metric $\| \|_S$ on the disc $S$ induced by $j$ and $\omega^S$, we then have
	\begin{align} \label{eq:Hconvexineq1} |b| \leq 3 C_1 \|v\|_g \left( |\gamma (w)| + | \gamma (jw )| \right) \leq 3 C_1 \|v \|_g \| \gamma \|_S \|w\|_S . \end{align}
	Again, using equations \eqref{eq:gromovtrick} and \eqref{eq:longform}, one calculates the $y^2$ coefficient of $q$ to be 
	\begin{align*} c & = \gamma (w)^2 \tau_1 (J X_H) - \gamma (w) \gamma (jw) ( \tau_1 (X_H ) + \tau_2 (JX_H) ) + \gamma (jw)^2 \tau_2 (X_H ) , \\ & \hspace{1cm} + \mathcal{L}_{J X_H} (g) \tn \gamma (w, jw) - \mathcal{L}_{X_H} (g) \tn^c \gamma (w , jw) + \omega_f^S (w, jw) . \end{align*}
	Using \ref{lemma:Hconvex:4} twice we see that, for any integer $l$, $|\tau_1 (J^l X_H)| \leq 3 C_1^2$ and $|\tau_2 (J^l X_H) | \leq C_1^2$. Combining this observation with \ref{lemma:Hconvex:3} we obtain
	\begin{align} \label{eq:Hconvexineq2}
	|c - \omega_f (w, jw) | & \leq \left( 8 C_1^2  \| \gamma \|_S^2 + C_0 \| \tn \gamma \|_S + C_0 \| \tn^c \gamma\|_S \right) \|w\|_S^2.
	\end{align}
	Combining inequalities \eqref{eq:Hconvexineq1} and \eqref{eq:Hconvexineq2} we obtain
	\begin{align} \label{eq:Hconvexineq3}
	\begin{split}
	\left| \frac{ b^2  - a (c - \omega_f (w, jw))}{a \|w\|_S^2} \right| & \leq \frac{b^2}{a\|w\|_S^2} + \frac{|c - \omega_f (w, jw) |}{\|w\|_S^2}, \\ & \leq  17 C_1^2 \|\gamma \|_S^2 + C_0 ( \| \tn \gamma \|_S + \|\tn^c \gamma \|_S ). 
	\end{split} 
	\end{align}
	Now, since $a$ is positive, $q$ is positive definite if and only if $c > b^2 / a$ or $\omega_f (w, jw) > b^2 / a - (\omega_f(w, jw) - c)$. In local coordinates $(s,t)$ on $S$, let $f = B (s^2 + t^2)$, reducing this inequality to $B \|w\|^2  > b^2 / a - (\omega_f (w, jw) - c)$. 
	Thus for $B > 17 C_1^2 \|\gamma \|_S^2 + C_0 ( \| \tn \gamma \|_S + \|\tn^c \gamma \|_S )$, equation \eqref{eq:Hconvexineq3} implies $q$ is positive definite and, as $u = (v,w)$ was arbitrary, $g$ is $I_{\mathbf{H}}$ convex. As convexity is an open condition, there exists a neighborhood around the trivial perturbation data $\mathbf{H}$ for which $g$ will remain convex.
\end{proof}

Lemma~\ref{lemma:Hconvex} gives one a practical test to verify that the moduli spaces of discs $\mathcal{R}^{d + 1, \mathbf{w}} (\mathbf{x})$, which define the structure maps in the Fukaya category, admit the  compactification necessary for the $A_\infty$-structure.  To state this precisely, we prove the last lemma of this section.

\begin{lemma} \label{lemma:compactness} Suppose $H$ is partially convex, then there exists a compact
	subspace $K \subset M$ such that any $(S, u) \in \mathcal{R}^{d + 1,
		\mathbf{w}} (\mathbf{x} )$ satisfies  $u (S) \subset K$.
\end{lemma}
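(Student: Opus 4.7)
Let $g : M \to \mathbb{R}$ be a suppressing function for $H$ with bound $B$, provided by the partial convexity of $H$. The plan is to produce uniform upper bounds on both $H \circ u$ and $g \circ u$ over all $(S, u) \in \mathcal{R}^{d+1, \mathbf{w}}(\mathbf{x})$, and then confine $u(S)$ to the compact intersection of the resulting sublevel sets.

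First I would leverage the Liouville structure of $(H, J)$ at infinity to bound $H \circ u$ uniformly. The chord union $K_{\mathbf{x}} := \bigcup_k x^k([0,1])$ is compact, so $\sup_{K_{\mathbf{x}}} H \leq C_0$ for some $C_0$. Outside a compact set, $-d^c H = \lambda$ and $H$ is $J$-convex, and each Lagrangian $L_k$ satisfies \eqref{eq:lagpr2}, so that its primitive vanishes in the region $\{H > C\}$. Under these circumstances the standard wrapped maximum principle for $H$ (as in \cite{abouzaid12, as}) applies and yields $H \circ u \leq C_1$ for a constant $C_1$ depending only on $C_0$, on the compact support of the perturbations $\mathbf{H}$ relative to $(H,J)$, and on the global bound $L$ from \eqref{eq:gamma4}, but not on the individual solution $(S,u)$.

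With $H \circ u \leq C_1$ in place, condition \eqref{eq:lagpr1} implies that each $L_k \cap \{H \leq C_1\}$ is compact, hence $g \circ u|_{\partial S}$ is bounded by some $g_0$ independent of $(S,u)$. Lemma~\ref{lem:finalest} now upgrades this to $\|g \circ u\|_{L^\infty(S)} \leq g_0 + 2B$. Therefore $u(S)$ lies in $K := \{H \leq C_1\} \cap \{g \leq g_0 + 2B\}$, which is compact by \ref{def:convex:1}, and since $K$ is independent of $(S,u)$, the claim follows.

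The main obstacle will be making the $H$-maximum principle uniform over the moduli space $\mathcal{R}^{d+1, \mathbf{w}}(\mathbf{x})$, where the conformal structure on $S$ and the perturbation data $\mathbf{H}$ both vary. The crucial inputs are that the perturbations have compact support relative to $(H,J)$---so outside this support the Floer equation is governed by the unperturbed $(H,J)$ and the standard no-escape argument applies---and that \eqref{eq:gamma4} provides a global $C^1$ bound on the $1$-forms $\gamma_p$, which allows the additional boundary and perturbation terms to be absorbed into the constant $C_1$.
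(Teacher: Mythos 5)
Your argument is correct and follows essentially the same route as the paper: both first invoke the a priori bound $H\circ u \leq C_1$ from the standard wrapped maximum principle (the paper cites Lemma 7.2 of \cite{as}), then use compactness of $L_k \cap H^{-1}((-\infty,C_1])$ to bound the boundary values and a maximum principle for the suppressing data to confine $u(S)$ to a proper sublevel set. The only cosmetic difference is that you route the second step through Lemma~\ref{lem:finalest}, while the paper reruns the subharmonicity argument directly with a general $I_{\mathbf{H}}$-convex function $G$ on $\bar{E}_C$ and its proper sublevel sets; the underlying mechanism is identical.
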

\begin{proof}
	We cite Lemma 7.2 of \cite{as} which implies there exists $C$ for which $H (u (S)) < C$ for all $(S, u) \in \mathcal{R}^{d + 1, \mathbf{w}} (\mathbf{x} )$. In our notation, this implies $u(S) \subset \bar{E}_C$ for every $(S,u) \in \mathcal{R}^{d + 1, \mathbf{w}} (\mathbf{x})$.	Let $(S, u) \in \mathcal{R}^{d + 1, \mathbf{w}} (\mathbf{x})$ be an arbitrary stable popsicle map and write $\sigma : S \to E$ for the associated $I_{\mathbf{H}}$-holomorphic section. Let $K_1 \subset \bar{E}_{C}$ be a compact subset containing $L_k \times S \cap \bar{E}_{C}$ for 	$0 \leq k \leq d$ and $\sigma (\epsilon^{k} (s, -))$ with $s \gg 0$ for $1 \leq k \leq d$ and $s \ll 0$ for $k = 0$. Such a set exists by the assumption \eqref{eq:lagpr2} and property \eqref{eq:disc2}. Let $G : E \to \mathbb{R}_{\geq 0}$ be an $I_\mathbf{H}$-convex function. Since $G|_{\bar{E}_C}$ is proper, we may assume that there is a constant $A$ for which $G(K_1) \subset [0, A)$. Then there exists a neighborhood $U$ of $\partial (S \cup \{\zeta^0, \ldots, \zeta^d\} )$ with $G (\sigma (U_{\varepsilon} )) \subset [0,A)$. Taking $r$ sufficiently close to $1$ and $S_r$ the radius $r$ sub-disc of $S$ this implies that $G (\sigma (\partial S_{r} )) \subset [0, A)$. But, since $\sigma$ is $I_{\mathbf{H}}$-holomorphic, we can compute the Laplacian $\Delta (G \circ \sigma ) = \omega_G (\partial_s u, I_{\mathbf{H}} \partial_s u) \geq 0 $ (see \cite[Section 9.2]{ms}). Thus $G \circ \sigma$ is subharmonic and obeys the maximum principle showing that $G (\sigma (S)) \subset [0, A)$ or $\sigma (S) \subset G|_{\bar{E}_C}^{-1}([0, A])$. As  $G$ is proper on $\bar{E}_C$, the sublevel set $K_2 := G|_{\bar{E}_C}^{-1}([0, A])$ is compact. Letting $K \subset M$ be the compact image $\pi_M (K_2)$ we see $u(S) = (\pi_M \circ \sigma) (S) \subset K$ for all $(S,u)\in \mathcal{R}^{d + 1, \mathbf{w}}(\mathbf{x})$.
\end{proof}

This result then yields the following theorem as a corollary.
\begin{theorem} \label{thm:exist}
	If $H : M \to \mathbb{R}$ is partially convex, then $\wrapped{M}{H}$ is a well defined $A_\infty$-category.
\end{theorem}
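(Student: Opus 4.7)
The plan is to deduce the theorem as a direct corollary of Lemma~\ref{lemma:compactness}, since the authors have set up partial convexity precisely to produce the single missing ingredient (Gromov compactness) in the standard construction of a wrapped $A_\infty$-category. First I would observe that the hypothesis lets us apply Lemma~\ref{lemma:compactness} to every moduli space $\mathcal{R}^{d+1,\mathbf{w}}(\mathbf{x})$ appearing in the definition of the structure maps: there is a single compact $K \subset M$ containing the image of every popsicle map, and in fact the same argument applied to the parametrized moduli spaces that interpolate between admissible choices of perturbation data confines all relevant maps to such a $K$.

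Once discs are confined to a compact region, the remaining arguments are essentially verbatim from the wrapped case. I would proceed in three steps. First, uniform energy bounds: because $M$ and the Lagrangians $L_k^\#$ are exact, and the primitives $f_k$ of $\lambda|_{L_k}$ vanish at infinity by \eqref{eq:lagpr2}, the standard action estimate for Floer's equation \eqref{eq:Floer} with inhomogeneous term $X_H \otimes \gamma$ gives an energy bound depending only on $\deg(x^k)$, the weights, and the uniform bounds \eqref{eq:gamma4} on $\gamma_p$. Second, Gromov compactness within $K$: with images in a compact set and uniform energy, sequences of solutions admit Gromov-convergent subsequences; exactness of $\omega$ excludes sphere bubbling, and exactness of $\lambda|_L$ excludes disc bubbling, so the only boundary strata of $\overline{\mathcal{R}}^{d+1,\mathbf{w}}(\mathbf{x})$ are the usual broken popsicle configurations. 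Third, transversality and coherent orientations: the perturbation scheme from \cite{as} (which is supported on compact subsets via the space $\mathcal{H}_{X, H, J}$ of perturbation data introduced earlier) applies unchanged, yielding smooth manifolds of the expected dimension, and the Pin-structure/orientation formalism of \cite[\S 11]{seidel} is insensitive to non-properness of $H$.

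With compact, transversely cut-out moduli spaces carrying coherent orientations, the counts $\mathrm{m}^{d,\mathbf{w}}$ are finite and the maps $\mu^d$ are well defined after passing to the continuation-map direct limit $\mathbf{w} \to \infty$. The quadratic $A_\infty$-relations then follow by examining the codimension-one boundary of the one-dimensional components of $\overline{\mathcal{R}}^{d+1,\mathbf{w}}(\mathbf{x})$ in the standard way: the only contributions come from popsicle breaking at a marked point, and the induced pairing of such breakings is exactly the identity $\sum \pm \mu^{d_1}(\cdots \mu^{d_2}(\cdots) \cdots) = 0$. The main obstacle in the whole argument is the $C^0$-confinement of holomorphic maps, and that is precisely what Lemma~\ref{lemma:compactness} supplies; everything else is a transcription of \cite{as, abouzaid12, fss} into the present setting.
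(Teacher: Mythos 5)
Your proposal is correct and follows exactly the route the paper takes: the paper presents Theorem~\ref{thm:exist} as an immediate corollary of Lemma~\ref{lemma:compactness}, with the $C^0$-confinement of popsicle maps being the only new ingredient needed beyond the standard wrapped machinery of \cite{as, abouzaid12, fss}. Your write-up simply makes explicit the standard steps (energy bounds from exactness, exclusion of bubbling, transversality via compactly supported perturbations, and the boundary analysis giving the $A_\infty$-relations) that the paper leaves implicit.
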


\section{\label{sec:local} Local models} 
Examples of partially wrapped categories have been explored in complex dimension $1$ \cite{hkk, lee}. 
In this section we will extend these categories to higher dimensions, pursuing a differing approach than \cite{sylvan}, so long as our manifold and wrapping Hamiltonian meet certain basic and somewhat stringent requirements. While the local structure investigated here can be generalized as in \cite{nadler16},  it is sufficiently rich to describe mirror categories to quasi-affine toric stacks. 

\subsection{Simplicial Lagrangians} Let  $\omega$ be the canonical symplectic structure on $T^* \mathbb{R}^m$. Denote the set $\{1, \ldots, m\}$ by $[m]$ and $\mathcal{P} ([m])$ its power set. Suppose $K \subseteq \mathcal{P} ([m])$ is an abstract simplicial complex with vertex set $[m]$. We will write $\sigma \in K$ for membership of simplices as well as $j \in K$ when $j \in [m]$ and $\{j\} \in K$.
For any simplex $\sigma \in K$, we define a conical Lagrangian in $T^* \mathbb{R}^n$ as 
\begin{align} \label{eq:stratadef}
L_\sigma = \{ (x, y) \in T^*_x \mathbb{R}^m : x_i = 0 , y_i \geq 0 \text{ for } i \in \sigma , y_j = 0 \text{ for } j
\not\in \sigma \}.
\end{align}
For example, $L_\emptyset \cong \mathbb{R}^m$ is the zero section of $T^* \mathbb{R}^m$. We write $L_\sigma^\circ$ for the relative interior of $L_\sigma$. Taking the union of these Lagrangians gives the singular,
conical Lagrangian
\begin{align} \label{eq:singlagdef}
L_K := \bigcup_{\sigma \in K} L_\sigma .
\end{align}

Equating $\mathbb{C}^m = T^* \mathbb{R}^m$ gives the former the usual Hermitian metric. For any subset $S \subseteq T^* \mathbb{R}^m$, let $d_S : T^* \mathbb{R}^m \to \mathbb{R}$ be the distance from $S$ relative to this
metric. Define the kinetic energy $\tilde{H}_K : T^* \mathbb{R}^m \to \mathbb{R}$ relative to $L_K$ to be 
\begin{align}
\tilde{H}_K  = \frac{1}{2} d_{L_K}^2  
\end{align}
and for any $\varepsilon > 0$, denote the sublevel set by
\begin{align} \tilde{U}_K (\varepsilon )= \left\{ p \in T^* \mathbb{R}^m : \tilde{H}_K (p)  \leq \varepsilon \right\} \end{align}
For $\sigma \in K$, let 
\begin{align} \tilde{U}_\sigma (\varepsilon ) = \left\{ p \in \tilde{U}_K (\varepsilon ) : \tilde{H}_K (p) =  \frac{1}{2} d_{L_\sigma}^2 (p) \right\} 
\end{align}
be the closed subset of points closest to the strata $L_\sigma \subset L_K$. We note that the Hamiltonian vector field of $\tilde{H}_K$ is defined in $\tilde{U}_\sigma (\varepsilon )$ and easily calculated to be  
\begin{align} \label{eq:hamvf}
\tilde{X}_{K} & = \sum_{j \not\in \sigma} y_j \partial_{x_j} - \sum_{j \in \sigma} x_j \partial_{y_j}. 
\end{align}

When $K = \emptyset$, this is simply the kinetic energy which induces cogeodesic flow. In this case, one can
take this as a local model for the wrapped Fukaya category on the cotangent bundle of a manifold. However, for any non-trivial $K$, the function $\tilde{H}_K$ is not smooth near points equidistant to different strata. We take 
\begin{align}
\sing{K} = \{ p \in T^* \mathbb{R}^m : \tilde{H}_K \text{ is not differentiable at } p\}. 
\end{align} 
We will write $\tilde{X}_K$ for the discontinuous Hamiltonian vector field of $\tilde{H}_K$ defined off of $\sing{K}$. Note that $\sing{K}$ is contained in the set of elements of $T^* \mathbb{R}^m$ which are equidistant to different strata $L_\sigma$ of $L_K$. For any $J \subset K$, we introduce the notation 
\begin{align}
\singstr{K}{J} = \{p \in \sing{K} : 1/2 d_{L_\sigma}^2 (p) = \tilde{H}_K (p) \text{ for all } \sigma \in J\}.
\end{align}

\begin{figure}[t]
	\begin{picture}(0,0)%
	\includegraphics[scale=.95]{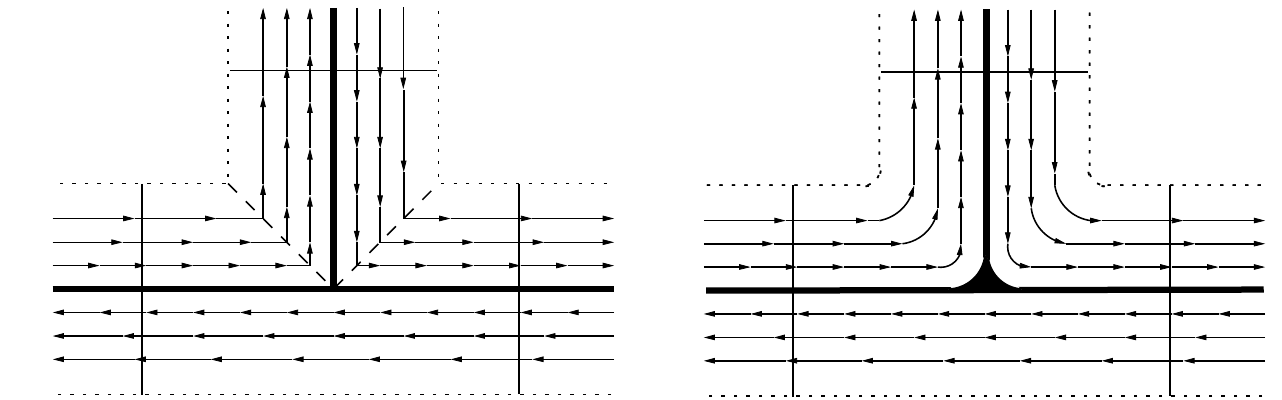}%
	\end{picture}%
	\setlength{\unitlength}{3947sp}%
	\begin{picture}(6135,1901)(3941,-8343)
	\put(4504,-7253){\makebox(0,0)[lb]{\smash{$L_0$}}}
	\put(4741,-6870){\makebox(0,0)[lb]{\smash{$L_1$}}}
	\put(6213,-7247){\makebox(0,0)[lb]{\smash{$L_2$}}}
	\put(3856,-7860){\makebox(0,0)[lb]{\smash{$L_K$}}}
	\put(9781,-7854){\makebox(0,0)[lb]{\smash{$Z_{\delta, K}$}}}
	\put(7469,-7260){\makebox(0,0)[lb]{\smash{$L_0$}}}
	\put(7696,-6870){\makebox(0,0)[lb]{\smash{$L_1$}}}
	\put(9218,-7254){\makebox(0,0)[lb]{\smash{$L_2$}}}
	\end{picture}
	\caption{\label{fig:A2}Generating Lagrangians, $\tilde{H}_K$ for $K$ equal to a point and an $A_2$ Hamiltonian $H$.}
\end{figure}

\begin{example} \label{example:a2} A basic example
	which will play a dominant role is the one dimensional case $n = 1$ with $K = \{\emptyset , \{1\}\}$. 
	The Lagrangian $L_K$, its neighborhood $\tilde{U}_K (\varepsilon)$ and the Hamiltonian vector field $\tilde{X}_K$ for $\tilde{H}_K$ for this
	example are illustrated in Figure \ref{fig:A2}. 
\end{example}

To obtain a wrapped Fukaya category using the Hamiltonian
$\tilde{H}_K$, one would first search for a smooth plurisubharmonic approximation $H_K$. Unfortunately, an arbitrarily close $C^0$-approximation does not exist in general as the function $\tilde{H}_K$ is not plurisubharmonic as a continuous function. This can be seen through direct computation in the case of Example \ref{example:a2}. Nevertheless, for $K = \{\emptyset, \{1\}\}$ one may perturb $\tilde{H}_K$ by choosing $0 < \delta < \varepsilon$ and considering the set 
\begin{align} \label{eq:A2domain}
Z_{\delta} := L_K \cup \left\{ (x,y) \in T^* \mathbb{R} :0 \leq  y \leq \delta , |x| \leq \delta , \|(x,y) \pm (\delta , \delta ) \| \geq \delta \right\}.
\end{align}
The kinetic energy relative to $Z_{\delta}$
\begin{align} \label{eq:A2Ham}
\tilde{H}_{\delta} = \frac{1}{2} d^2_{Z_{\delta}}
\end{align}  is a differentiable plurisubharmonic function.
\begin{definition}
	Let $K$ be the simplicial set of a point. An $i$-convex smooth Hamiltonian $H : \tilde{U}_K (\varepsilon ) \to \mathbb{R}$ will be called an $A_2$-Hamiltonian if there exists $0 < \delta < \varepsilon$ and an isotopy $\{X_t\}_{t \in [0,1]}$ of Hamiltonian vector fields with $X_1 = X_{H}$, $X_0 = X_{\tilde{H}_{\delta}}$ and $\tn \tilde{H}_{\delta} (X_t (p)) = 0$ for all $t$ and $\tilde{H}_{\delta} (p) \geq \delta / 2$.
\end{definition}
This definition ensures that an $A_2$-Hamiltonian will look the same as $X_{\tilde{H}_{\delta}}$ outside of a neighborhood of $\sing{K}$.  For general simplicial complexes $K$, instead of focusing on a particular smoothing of $\tilde{H}_K$, we will consider Hamiltonians which satisfy a set of axioms which are properties of $\tilde{H}_K$. In order to motivate and state these axioms, we first must establish some notation. 

Given a simplex $\sigma \in K$, recall that the star of $\sigma$ consists of the simplices $\str (\sigma) = \{\tau \in K : \sigma \subseteq \tau\}$. For any subset $T \subseteq K$, its closure  is the union of the  faces of the simplices in $T$, or $\clos (T) = \{\tau : \tau \text{ is a face of } \tau^\prime \in T\}$.  The link of $\sigma \in K$ is the simplicial subset $\lk (\sigma ) = \clos (\str (\sigma)) - \str (\clos (\sigma ))$.  Let $\sigma \in K$ and $I$ be a set of points in $\lk (\sigma )$. Write the simplicial subset of $\lk (\sigma)$ obtained by restriction to $I$ as $\lk_I (\sigma )$. 

We identify $T^* \mathbb{R}^m$ with $\mathbb{C}^m$ and, for any $I \subset [m]$, write $\mathbb{C}^I$ for the vector space with standard basis $\{e_i : i \in I\}$. Let $\pi_I : \mathbb{C}^m \to \mathbb{C}^{I}$ be the projection and $\iota_I : \mathbb{C}^{I} \to \mathbb{C}^m$ the inclusion. 
\begin{definition} 
	Suppose $\sigma \in K$ and $I \subseteq [m]$ a subset of points in $\lk (\sigma )$. The vector 
	\begin{align} \label{eq:uIdef}
	u_I = \sum_{j \not\in I \cup \sigma} a_j e_j + \sum_{j \in \sigma } i b_j e_j
	\end{align}
	will be called \textit{sufficiently displaced} if $a_j, b_j \in \mathbb{R}$,  $|a_j| > \sqrt{2 \varepsilon}$ and $b_j > \sqrt{2 \varepsilon}$ for all $j \not\in I$. 
\end{definition}
Consider the Lagrangian subspace
\begin{align} \label{eq:sigbk} L_{\sigma, [m] \backslash I} = \left\{\sum_{j \not\in I} w_j e_j : \Re (w_j ) = 0, \textnormal{ for } j \in \sigma, \Im (w_j ) = 0, \textnormal{ for } j \not\in \sigma \right\}  \end{align} 
of $\mathbb{C}^{[m] \backslash I}$.
Note that if $u_I$ is sufficiently displaced, then there exists a neighborhood $U$ of the origin in $\mathbb{C}^{[m]\backslash I}$ such that $u_I + u$ is sufficiently displaced for all $u \in U \cap L_{\sigma, [m] \backslash I}$. More precisely, consider the affine subspace 
\[ V_{u_I} = \left\{ z \in \mathbb{C}^m : \pi_{[m] \backslash I} (z) = u_I \right\} \]
and take $L_{u_I} = L_K \cap V_{u_I}$ and $V_{u_I}  (\varepsilon ) = V_{u_I } \cap \tilde{U}_\varepsilon (K)$. Then we have the following lemma.
\begin{lemma} \label{lemma:prop1}
	For $\sigma \in K$, $I$ a set of points in $\lk (\sigma )$, $u_I$ sufficiently displaced, the affine inclusion $\iota_{I} + {u_I} : (\tilde{U}_{K|_I} (\varepsilon ), L_{K|_I}) \to (V_{u_I} (\varepsilon  ), L_{u_I} )$ is an isomorphism of pairs. Furthermore, there is a neighborhood $U$ of the origin in $\mathbb{C}^{[m] \backslash I}$ and a real vector field $Y$ in $U \subset \mathbb{C}^{[m] \backslash I}$ which vanishes on $L_{\sigma, [m] \backslash I} \cap U$ for which 
	\begin{align*} \tilde{X}_K = (\iota + u_I)_* \left(\tilde{X}_{K|_I} \right) + (\iota_{[m] \backslash I})_* (Y )
	\end{align*}
	for all elements of $\left( \tilde{U}_{I} (\varepsilon) \oplus U \right) + u_I \subset \mathbb{C}^m$.
\end{lemma}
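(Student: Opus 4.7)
The plan is to reduce both assertions to an explicit coordinate computation, using the sufficient-displacement bounds to control which strata of $L_K$ can be near $V_{u_I}$. First, I would determine which $L_\tau$ can come within distance $\sqrt{2\varepsilon}$ of a point $p \in V_{u_I}$. If $j \in \sigma$, then $p$ has $y_j = b_j > \sqrt{2\varepsilon}$, so any $q \in L_\tau$ with $j \notin \tau$ (forcing $y_j = 0$) would be at distance $\geq b_j$; hence $\sigma \subseteq \tau$. Symmetrically, if $j \in [m] \setminus (I \cup \sigma)$, then $p$ has $x_j = a_j$ with $|a_j| > \sqrt{2\varepsilon}$, so $j \in \tau$ (which forces $x_j = 0$) is impossible; hence $\tau \subseteq \sigma \cup I$. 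Thus every stratum relevant on $V_{u_I} \cap \tilde{U}_K(\varepsilon)$ has the form $\tau = \sigma \cup \tau'$ with $\tau' \subseteq I$ and $\sigma \cup \tau' \in K$, i.e.\ $\tau' \in K|_I := \lk_I(\sigma)$.

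For such $\tau = \sigma \cup \tau'$, the squared distance from $p = \iota_I(w) + u_I$ to $L_\tau$ splits as a sum over $I$ and $[m] \setminus I$: the closest point in $L_\tau$ matches $u_I$ exactly on $[m] \setminus I$ (since $x_j = a_j$ for $j \in [m] \setminus (I \cup \sigma) \not\subseteq \tau$ and $y_j = b_j$ for $j \in \sigma \subseteq \tau$ are allowed choices), contributing $0$, while on $I$ the minimization reduces to $d_{L_{\tau'}}(w)^2$ inside $\mathbb{C}^I$. Hence $\tilde{H}_K \circ (\iota_I + u_I) = \tilde{H}_{K|_I}$ on the $\varepsilon$-sublevel set, and under this identification $L_K \cap V_{u_I}$ corresponds to $L_{K|_I}$. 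This gives the isomorphism of pairs.

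For the vector-field statement, I would plug $p = \iota_I(w) + \iota_{[m]\setminus I}(\xi) + u_I$ into formula \eqref{eq:hamvf}, using the $\tau = \sigma \cup \tau'$ whose stratum is closest to $p$. The $I$-components yield $\sum_{i \in I \setminus \tau'} y_i(w)\partial_{x_i} - \sum_{i \in \tau'} x_i(w)\partial_{y_i}$, which is precisely $(\iota_I + u_I)_*(\tilde{X}_{K|_I})$ at $w$, since the shift $u_I$ has no $I$-components. The $[m]\setminus I$-components yield
\[
Y(\xi) \;=\; \sum_{j \in [m]\setminus(I\cup\sigma)} \Im(\xi_j)\,\partial_{x_j} \;-\; \sum_{j \in \sigma} \Re(\xi_j)\,\partial_{y_j},
\]
which depends only on $\xi$ and vanishes exactly when $\Im(\xi_j) = 0$ for $j \notin \sigma$ and $\Re(\xi_j) = 0$ for $j \in \sigma$; this is precisely the defining condition \eqref{eq:sigbk} for $L_{\sigma, [m]\setminus I}$.

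The main obstacle is ensuring that the closest-stratum classification from the first step remains valid throughout the thickened region $(\tilde{U}_{K|_I}(\varepsilon) \oplus U) + u_I$, not merely on the slice $\xi = 0$. I would choose $U$ small enough that $|a_j| - |\Re(\xi_j)| > \sqrt{2\varepsilon}$ for $j \in [m]\setminus(I\cup\sigma)$ and $b_j - |\Im(\xi_j)| > \sqrt{2\varepsilon}$ for $j \in \sigma$ hold uniformly on $U$; the earlier exclusion arguments then transfer verbatim, and by the continuity of the closest-point assignment away from $\sing{K}$ the formula \eqref{eq:hamvf} applied to a fixed $\tau = \sigma \cup \tau'$ describes $\tilde{X}_K$ on each smooth piece. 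Once this neighborhood choice is locked in, the decomposition is purely a matter of separating the coordinate sums.
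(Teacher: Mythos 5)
Your proposal is correct and follows essentially the same route as the paper: use the sufficient-displacement bounds to show that only strata of the form $L_{\sigma \sqcup \tau'}$ with $\tau' \in K|_I$ are within reach of $V_{u_I}$, deduce the local product decomposition of $L_K$ and hence the splitting $\tilde{H}_K = \tilde{H}_{K|_I} + \frac{1}{2} d^2_{L_{\sigma, [m]\backslash I}}$, and conclude the splitting of $\tilde{X}_K$. Your explicit coordinate formula for $Y$ is precisely the Hamiltonian vector field of $\frac{1}{2} d^2_{L_{\sigma,[m]\backslash I}}$ that the paper obtains from the splitting of the symplectic structure, so the two arguments coincide; you simply supply more of the details the paper leaves implicit.
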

\begin{proof}
	Let $\phi$ be the affine inclusion $\iota_I + u_I$.  Since $I$ is a subset of points in $\lk (\sigma)$, we have $\tau \in K|_I$ if and only if  $\tau \sqcup \sigma \in K$. Using equation~\eqref{eq:stratadef}, and the definition of $u_I$ in equation~\eqref{eq:uIdef}, one then has $\phi (L_\tau ) = L_{\sigma \sqcup \tau}  \cap \textnormal{im} (\phi )$. This establishes that that $\phi$ is a holomorphic isomorphism between the pairs $(U_{K|_I} (\varepsilon ), L_{K|_I})$ and $(V_{u_I} (\varepsilon  ), L_{u_I} )$. Moreover, in a sufficiently small neighborhood $U$ of the origin in $\mathbb{C}^{[m] \backslash I}$, we have 
	\begin{align} \label{eq:decomp}
	L_{K} \cap \left[(\mathbb{C}^I \oplus U ) + u_I\right] = \phi ( L_{K|_I} ) \oplus \left(L_{\sigma, [m] \backslash I} \cap U \right).
	\end{align}
	
	This implies that $\tilde{H}_K = \tilde{H}_{K|_I} + \frac{1}{2} d_{L_{\sigma, [m] \backslash I}}^2$ in $(\mathbb{C}^I \oplus U ) + u_I$. As the standard symplectic structure in $\mathbb{C}^m$ splits with respect to this direct sum, the Hamiltonian vector field $\tilde{X}_K$ also splits as the sum $(\iota_I + u_I)_* (X_{\tilde{H}_{K|_I}}) + (\iota_{[m] \backslash I})_* (Y)$ where $Y$ is the Hamiltonian vector field of $\frac{1}{2} d_{L_{\sigma, [m] \backslash I}}^2$.
\end{proof}
We now turn the property in Lemma~\ref{lemma:prop1} into a definition.
\begin{definition} \label{def:decomposable}
	Let $K$ be a simplicial complex. A collection of smooth Hamiltonians $\{H_I : \tilde{U}_{K|_I} (\varepsilon ) \to \mathbb{R} \}_{I \subset K}$ will be called decomposable if, for every $\sigma \in K$, $I$ a set of points in $\lk (\sigma )$ and $u_I$ sufficiently displaced, there exists a vector field $Y$ on a neighborhood $U \subset \mathbb{C}^{[m] \backslash I}$, vanishing along $L_{\sigma, I}$ for which 
	\begin{align} \label{eq:splitvf}
	X_{H} = (\iota_{I} + u_I)_* ( X_{H_I})  + (\iota_{[m] \backslash I})_* ( Y)
	\end{align}
	on $U$.
\end{definition}

Another important property that the discontinuous Hamiltonian $\tilde{H}_K$ satisfies is stated in the following lemma.
\begin{lemma} \label{lemma:prop2}
	For any $i \in [m]$,  $a \in \mathbb{R}$, if $f = (x_i - a)^2$ or $f = (y_i- a)^2$, then $\tilde{X}_K^2 (f) \geq 0$ when it is defined.
\end{lemma}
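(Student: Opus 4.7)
The plan is to verify the inequality stratum by stratum, since $\tilde{X}_K$ is defined by the explicit formula \eqref{eq:hamvf} on the open set $\tilde{U}_\sigma(\varepsilon) \setminus \sing{K}$ associated to each simplex $\sigma \in K$, and this is precisely where $\tilde{X}_K^2(f)$ is defined. Fix $\sigma \in K$ and work on $\tilde{U}_\sigma(\varepsilon)$, where $\tilde{X}_K = \sum_{j \not\in \sigma} y_j \partial_{x_j} - \sum_{j \in \sigma} x_j \partial_{y_j}$. The computation then reduces to a direct case analysis on whether $i \in \sigma$ or $i \not\in \sigma$, and on whether $f$ is a function of $x_i$ or $y_i$.

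For $f = (x_i - a)^2$: if $i \in \sigma$ then $\tilde{X}_K$ has no $\partial_{x_i}$-component, so $\tilde{X}_K f = 0$ and hence $\tilde{X}_K^2 f = 0$. If instead $i \not\in \sigma$, then $\tilde{X}_K f = 2 y_i (x_i - a)$, and applying $\tilde{X}_K$ again one finds $\tilde{X}_K(y_i) = 0$ (since $i \not\in \sigma$) and $\tilde{X}_K(x_i - a) = y_i$, yielding $\tilde{X}_K^2 f = 2 y_i^2 \geq 0$. The case $f = (y_i - a)^2$ is symmetric: if $i \not\in \sigma$ then there is no $\partial_{y_i}$-term in $\tilde{X}_K$ and we get $0$; if $i \in \sigma$ then $\tilde{X}_K f = -2 x_i (y_i - a)$, and since $\tilde{X}_K(x_i) = 0$ while $\tilde{X}_K(y_i) = -x_i$, the second application yields $\tilde{X}_K^2 f = 2 x_i^2 \geq 0$.

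There is essentially no obstacle here beyond bookkeeping; the statement is an infinitesimal reflection of the fact that $\tilde{X}_K$ generates a rotation in the $(x_i, y_i)$-plane when $i \in \sigma$ and a translation (geodesic flow) when $i \not\in \sigma$, and in both cases the squared distance from a coordinate hyperplane is a convex function of the flow time. The one mild subtlety is remembering that the formula \eqref{eq:hamvf} is only valid on $\tilde{U}_\sigma(\varepsilon)$, and that the statement of the lemma explicitly excludes $\sing{K}$ via the qualifier ``when it is defined'', so no gluing or continuity argument across strata is needed.
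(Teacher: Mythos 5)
Your proof is correct and is exactly the computation the paper has in mind: its one-line proof simply cites equation~\eqref{eq:hamvf} and calls the verification elementary, and your stratum-by-stratum case analysis is that elementary verification written out in full. No issues.
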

\begin{proof}
	This follows from an elementary application of equation~\eqref{eq:hamvf} for $I = [m]$ and any $\tau \in K$.
\end{proof}
Again, we convert this conclusion into a definition, albeit in a slightly weakened form. First, consider a Hamiltonian $H : U \to \mathbb{R}_{\geq 0}$ on an open subset of $L_K \subset T^* \mathbb{R}^m$ and $Z = H^{-1} (0)$ to be the zero level set. Assume that there is a retraction $\pi : U \to Z$. 
\begin{definition} \label{def:expanding}
	Given a Hamiltonian $H : U \to \mathbb{R}_{\geq 0}$, let  $\varphi_t: U \to U$ be its time $t$ flow. Let $p \in U$ be such that $\pi (p) \in L^{sm}_\sigma$ for some $\sigma \in U$. We say that  $H$ is expanding if, given  $f = (x_i - a)^2$ for $i \not\in \sigma$ or $(y_i - a)^2$ for any $i \in \sigma$ and $a \in \mathbb{R}$, if $f(p) = 0$ then either $f (\pi (\phi_t (p)))$ is monotonic or $f (\pi (\phi_t (p))) \ne 0$ for $t > 0$.
\end{definition}
One final property held by the Hamiltonians $H_K$ that we wish to generalize is their behavior under taking simplicial cones. Given a simplicial complex $\bar{K}$ on $[m-1]$, the cone $\text{cone} (\bar{K})$ of $\bar{K}$ is the simplicial complex $\bar{K} \cup \{\sigma \cup \{m\} : \sigma \in \bar{K}\}$ on $[m]$. Note that $L_{\text{cone} (\bar{K})} = L_{\bar{K}} \times L_{m}$ where $m$ denotes the simplicial complex of the point $m$ as in Example~\ref{example:a2}.
\begin{definition} \label{def:stable} A collection of Hamiltonians $\{H_I : \tilde{U}_{K|_I} (\varepsilon ) \to \mathbb{R} \}$ is stable if, whenever $K|_I = \textnormal{cone} (K|_{I \backslash \{i\}})$, $H_I = H_{I \backslash \{i\}} + H_{\{i\}}$.
\end{definition}

We wish to preserve Lemmas \ref{lemma:prop1}, \ref{lemma:prop2} and stability when we consider a smooth approximation of $\tilde{H}_K$. We also ask that our approximation equals $\tilde{H}_K$ near any strata $L_\sigma \subset L_K$ and away from the singular locus. Define a neighborhood 
\begin{align}
\singe_K = \bigcup_{p \in \sing{K}} B (p, \varepsilon)  \subset T^* \mathbb{R}^m
\end{align}
around the singular set of $\tilde{H}_K$. Let $\{U_{K|_I} (\varepsilon) \subset T^* \mathbb{R}^I\}_{I \subseteq [m]}$ be a collection of neighborhoods of $L_{K|_I} \subset T^* \mathbb{R}^I$. We assume that $\tilde{U}_{I} (\varepsilon / 2) \subset U_{I}(\varepsilon) \subset \tilde{U}_I (2\varepsilon)$ for all $I$ and that they are compatible in the  sense that for any $\sigma \in K$, $I \subset \text{lk} (\sigma )$ and sufficiently displaced $u_I$, $\iota_I + u_I : (U_{K|_I} (\varepsilon), L_{K_I}) \to (V_{u_I} \cap U_K , L_{u_I})$ is an isomorphism of pairs. This implies that  there exists a neighborhood $U \subset \mathbb{C}^{[m] \backslash I}$ of the origin such that $(U_I (\varepsilon) + u_I) \times U = \{z \in U_{K} : \pi_{[m] \backslash I} \in u_I + U\}$.

\begin{definition} \label{def:ham}
	Let $K$ be a simplicial complex. A stable, decomposable, collection of smooth expanding Hamiltonians $\{H_I : U_{K|_I} (\varepsilon ) \to \mathbb{R}\}_{I \subset [m]}$ will be called a compatible smoothing of $\tilde{H}_K$ if:
	\begin{enumerate}[label=(\roman*), ref=\thetheorem(\roman*)]
		\item \label{def:ham:0} $H_{[m]}$ is $i$-convex and partially convex,
		\item \label{def:ham:1} for every point $I = \{i\} \subseteq [m]$, $H_I$ is an $A_2$-Hamiltonian,
		\item \label{def:ham:2} for every $I \subset [m]$ and $\sigma \in K|_I$, $H_I|_{U_\sigma (\varepsilon ) \backslash \singe_{K|_I} } = \tilde{H}_{K|_I}$.
	\end{enumerate}
\end{definition}
Note that if $\{ H_I \}_{I \subset [m]}$ is a compatible smoothing for $K$ and $J \subseteq [m]$ then $\{H_{I} \}_{I \subset J}$ is a compatible smoothing of $K|_J$. We now establish the existence of a compatible smoothing of $\tilde{H}_K$. The existence of an $A_2$-Hamiltonian is shown in equation~\eqref{eq:A2Ham} as kinetic energy relative to the space $Z_{\delta}$ defined in equation~\eqref{eq:A2domain}. For $K_{m}$ equal to the $(m - 1)$-simplex, we may take $H_{\delta, m} : T^* \mathbb{R}^{m} \to \mathbb{R}$ to be kinetic energy relative to $Z_\delta^{m}$. Then in a neighborhood of $L_{K_m} \subset Z^{n}_\delta$, $H_{\delta, m}$ is smooth and $i$-convex (as it is the sum of $H_\delta$ with respect to the $m$-coordinates). 

\begin{figure}[b]
	\begin{picture}(0,0)%
	\includegraphics{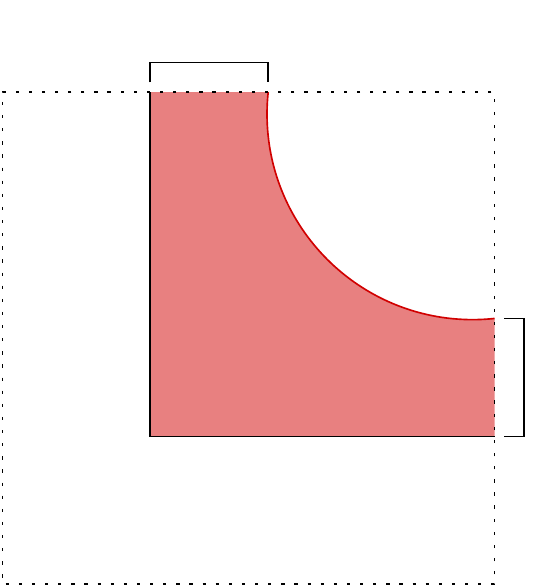}%
	\end{picture}%
	\setlength{\unitlength}{4144sp}%
	\begin{picture}(2502,2679)(7414,-3673)
	\put(8311,-1141){\makebox(0,0)[lb]{\smash{$\alpha$}}}
	\put(9901,-2786){\makebox(0,0)[lb]{\smash{$\alpha$}}}
	\end{picture}%
	\caption{\label{fig:LJ} The lowered strata $L_{I}^{< \alpha}$ for $I = [2]$.}
\end{figure}
For any $I \subseteq [m]$, write $e_I = \sum_{i \in I} e_i \in \mathbb{C}^m = T^* \mathbb{R}^m$, where $\{e_i\}$ is the standard basis. For any $\alpha > 0$ and subset $X \subset \mathbb{C}^m$, take $B_\alpha (X) = \{ z : d_X (z) < \alpha \}$ to be the open radius $\alpha$-neighborhood of $X$. Consider the subsets
\begin{align}
L_I^{> \alpha}  & = L_I \cap B_\alpha (L_I + 2 \alpha i e_I), \\
L_I^{\leq \alpha} & = L_I \backslash L_I^{> \alpha}.
\end{align}
Note that, in addition to the corners of $\partial L_I$, the subspace $L_I^{ \leq \alpha}$ acquires a $(m -1)$-dimensional isotropic smooth boundary component (except in the case of $I = \emptyset$). We write this boundary component as $\partial_+ L_I^{\leq \alpha}$.

Consider the $\varepsilon$-neighborhoods of $B_\varepsilon (L_I^{> \alpha})$ and $B_\varepsilon (L_I^{\leq \alpha})$. Observe that, if $I \ne J$ and $\varepsilon < \frac{\alpha}{2}$ then 
\begin{align} \label{eq:inters}
B_\varepsilon (L_I^{> \alpha}) \cap B_\varepsilon (L_J^{> \alpha}) = \emptyset.
\end{align}
Indeed, if $j \in J \backslash I$, then for any  $z  = (z_1, \ldots, z_n) \in L_{J}^{> \alpha}$ we have that $z_j = i b_j$ for $b_j \in \mathbb{R}$ and $b_j > \alpha$. On the other hand, for $w = (w_1, \ldots, w_n) \in L_I^{> \alpha}$ we have that $\Im (w_i) = 0$ so that the distance between $z$ and $w$ is greater than $\alpha$. 

Given a simplicial complex $K$ and $\alpha > \delta$, let 
\begin{align} L_{K, \delta, \alpha} := Z_\delta^n \backslash \left( \bigcup_{J \not\in K} L_J^{>\alpha} \right).
\end{align}
One notes that if $\delta \leq \delta^\prime$ and $\alpha \leq \alpha^\prime$ then $L_{K, \delta, \alpha} \subseteq L_{K, \delta^\prime, \alpha^\prime}$. We also have that
\begin{align}
L_{K, \delta, \infty} & = Z_\delta^n, & \bigcap_{\alpha > 0, \delta > 0} L_{K, \delta, \alpha} & = L_K.
\end{align}
The intuition behind this construction is that as $\alpha$ decreases from $\infty$, for each face $J$ of the $n$-simplex that does not lie in $K$, one lowers a wall corresponding to the strata $L_J$.
For $\varepsilon < \frac{\delta}{2}$ we take $U_{K, \delta, \alpha, \varepsilon}$ to be a $\varepsilon$-neighborhood of $L_{K, \delta, \alpha}$ and $h_{K} : U_{K, \delta, \alpha, \varepsilon}  \to \mathbb{R}$ kinetic energy relative to $L_{K, \delta, \alpha}$.
\begin{lemma}
	The collection $\mathbf{H} := \{h_{K|_I} : U_{K|_I, \delta, \alpha, \varepsilon} \to \mathbb{R} \}_{I \subset [m]}$ is a compatible smoothing of $\tilde{H}_K$.
\end{lemma}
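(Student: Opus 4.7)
The plan is to verify each of the six conditions defining a compatible smoothing (Definition~\ref{def:ham}) by directly exploiting the explicit construction of $L_{K,\delta,\alpha}$. Since $h_{K|_I}$ is the kinetic energy relative to $L_{K|_I,\delta,\alpha}$, and this set is built from $Z_\delta^{|I|}$ by ``lowering walls'' corresponding to the non-faces of $K$, the geometric properties of this set translate in a controllable way into analytic properties of $h_{K|_I}$.

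First, I would dispatch the easy items. Condition~\ref{def:ham:2} (agreement with $\tilde{H}_{K|_I}$ on $U_\sigma(\varepsilon)\setminus\singe_{K|_I}$) follows because near a smooth stratum point of $L_\sigma$ with $\sigma\in K|_I$, and away from the singular locus, the nearest point in $L_{K|_I,\delta,\alpha}$ lies on $L_\sigma$ itself (the walls for $J\notin K$ have been lowered and so cannot be closer than $L_\sigma$ within $\singe_{K|_I}^c$), so the two kinetic energies literally coincide. Condition~\ref{def:ham:1} follows immediately from the construction in equation~\eqref{eq:A2domain}, since for $I=\{i\}$ the set $L_{K|_I,\delta,\alpha}$ reduces to $Z_\delta$ and $h_{K|_I}$ is $\tilde{H}_\delta$, which is $A_2$ by definition, taking the Hamiltonian isotopy to be the identity.

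Next, the product-type properties. For \emph{stability} (Definition~\ref{def:stable}), when $K|_I=\textnormal{cone}(K|_{I\setminus\{i\}})$ the set $L_{K|_I,\delta,\alpha}$ is the Cartesian product $L_{K|_{I\setminus\{i\}},\delta,\alpha}\times Z_\delta$ in $T^*\mathbb{R}^{I\setminus\{i\}}\times T^*\mathbb{R}$, because cone-ing adds the vertex $\{i\}$ to every simplex without creating any new non-faces along the $i$-th axis. The squared distance therefore splits additively, giving $h_{K|_I}=h_{K|_{I\setminus\{i\}}}+h_{\{i\}}$. For \emph{decomposability} (Definition~\ref{def:decomposable}), given $\sigma\in K$, $I\subseteq\text{lk}(\sigma)$, and a sufficiently displaced $u_I$, the same product decomposition~\eqref{eq:decomp} used in Lemma~\ref{lemma:prop1} applies to the perturbed set $L_{K,\delta,\alpha}$ locally near $u_I$: the displacement is large enough that in the neighborhood $(\mathbb{C}^I\oplus U)+u_I$ the only relevant strata factor through $\pi_I$, so $L_{K,\delta,\alpha}\cap[(\mathbb{C}^I\oplus U)+u_I]=\phi(L_{K|_I,\delta,\alpha})\oplus(L_{\sigma,[m]\setminus I}\cap U)$. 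The squared distance splits, and hence so does the Hamiltonian vector field, as required.

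The expanding property (Definition~\ref{def:expanding}) is a pointwise statement on the flow of $X_{H_I}$ near the zero level set, and can be verified by a direct calculation mirroring Lemma~\ref{lemma:prop2}: on the strata of $L_{K|_I}$, the flow equations are those of equation~\eqref{eq:hamvf}, and one checks that $(x_j-a)^2$ and $(y_j-a)^2$ evolve monotonically unless they remain identically zero. Finally, condition~\ref{def:ham:0} has two parts. The $i$-convexity of $H_{[m]}=h_K$ on its smooth locus follows from the general fact that the squared distance to a closed subset of a K\"ahler manifold is plurisubharmonic in a neighborhood (here, $\varepsilon<\delta/2$ ensures smoothness). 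The main obstacle is \emph{partial convexity}: I would take the suppressing function to be $g=h_K$ itself (or a minor smoothing thereof), and verify the four hypotheses of Lemma~\ref{lemma:Hconvex}. Properties~\ref{lemma:Hconvex:1} and~\ref{lemma:Hconvex:2} are immediate from the previous observations. The bounded-Lie-derivative conditions \ref{lemma:Hconvex:3} and \ref{lemma:Hconvex:4} require the most care: they reduce, via the decomposition from Lemma~\ref{lemma:prop1} and the explicit form~\eqref{eq:hamvf} of $\tilde{X}_K$, to showing that $X_H$ has bounded norm and bounded derivatives of $g$ along it, on the compact region where $H_{[m]}$ differs from a model quadratic. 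This is where I expect the most technical work, and where the explicit geometry of $L_{K,\delta,\alpha}$ (in particular the uniform behavior of the walls as $\alpha$ varies) is essential.
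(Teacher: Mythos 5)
Most of your proposal tracks the paper's argument: conditions \ref{def:ham:1} and \ref{def:ham:2} read off from the construction, stability from the product splitting $L_{\text{cone}(\bar K),\delta,\alpha}=L_{\bar K,\delta,\alpha}\times Z_\delta$, decomposability from the local product structure near a sufficiently displaced $u_I$, and the expanding property from monotonicity of the coordinate functions under the flow (the paper phrases this via the closest-point retraction and the convexity of $\partial_+ L_I^{\leq\alpha}$ relative to the axes, but the content is the same). The one genuine gap is in your treatment of partial convexity, and it is fatal as written: you propose to take the suppressing function to be $g=h_K$ itself. Condition \ref{lemma:Hconvex:1} of Lemma~\ref{lemma:Hconvex} requires $g$ to be \emph{proper on sublevel sets of} $H$, and the sublevel sets of $h_K$ are non-compact --- the skeleton $L_{K,\delta,\alpha}$ contains the entire zero section $L_\emptyset\cong\mathbb{R}^m$ (and the other conical strata), so $h_K^{-1}([0,C])$ is a non-compact neighborhood of it on which $g=h_K$ is bounded by $C$, hence nowhere near proper. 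The whole point of a suppressing function in this ``partial'' setting is to supply properness precisely in the non-compact directions of the skeleton where $H$ fails to be exhaustive; taking $g=H$ forfeits exactly that.

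The paper instead builds $g$ from norm-type functions $g_I=\|1+\sum_{i\in I}\Re(z_i)e_i+\sum_{i\notin I}\Im(z_i)e_i\|$, one for each stratum, which grow linearly along the unbounded directions of $L_I^{\leq\alpha}$ (hence are proper on sublevel sets of $H$) while being plurisubharmonic, and then takes a smoothed maximum to get a global suppressing function; the bounds \ref{lemma:Hconvex:3} and \ref{lemma:Hconvex:4} then hold because $\|X_H\|\lesssim\sqrt{2\varepsilon}$ on $U_K(\varepsilon)$ and the $g_I$ have uniformly bounded first and second derivatives. Relatedly, your remark that the hard estimates only need checking ``on the compact region where $H_{[m]}$ differs from a model quadratic'' is also not right in general: the singular locus $\singe_K\cap U_K(\varepsilon)$ is itself non-compact once $K$ has two or more vertices (e.g.\ points equidistant to $L_{\{1\}}$ and $L_\emptyset$ with $x_2$ arbitrary), so the bounds in Lemma~\ref{lemma:Hconvex} must be verified globally, which is exactly what the uniform, translation-invariant geometry of the walls $L_J^{>\alpha}$ provides.
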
 
\begin{proof}
	One observes that, because equation \eqref{eq:inters}, the kinetic energy functions $h_{K|_I}$ are locally either the kinetic energy relative to the isotropic boundaries $\partial_+ L_I^{ \leq \alpha}$ or the kinetic energy relative to $Z_\delta^n$. Both of these functions are $i$-convex and that, near $L_I$, an application of Lemma~\ref{lemma:Hconvex} shows that $g_I = \|1 + \sum_{i \in I} \Re (z_i) e_i + \sum_{i \not\in I} \Im (z_i) e_i\|$ suppresses $h_{K}$ in a neighborhood of $L_I^{ \leq \alpha}$. Taking the maximum of $g_I$ and smoothing gives a global suppressing function $g$ for $h_K$ validating the partial convexity of $h_K$ and Definition~\ref{def:ham:0}.
	
	The fact that $\mathbf{H}$ is stable follows at once from the definition. The decomposable property is a result of the fact that, for any $\sigma \in K$ and $I \subset \lk (\sigma)$, $L_{K, \delta, \alpha}$ is locally a product of $L_{K|_I, \delta, \alpha}$ and $L_{\sigma, [m] \backslash I}$ near any sufficiently displaced $u_I$. The expanding property follows from considering the retraction $\pi : U_{K, \delta, \alpha, \epsilon} \to L_{K, \delta, \alpha}$ given by taking the closest point. Then, in a neighborhood of $\partial_+ L_I^{\leq \alpha}$, the flow of $h_K$ is given by the product of cogeodesic flow and rotation about $L_I^{\leq \alpha}$. Due to the convexity of $\partial_+ L_I^{\leq \alpha}$ relative to the axes, this flow is monotonic with respect to the coordinate functions. When in the neighborhood of $Z_\delta^n$, the monotonicity follows from the $A_2$-Hamiltonian case.
	
	Properties \ref{def:ham:1} and \ref{def:ham:2} follow immediately from the construction of $L_{K, \delta, \alpha}$.
\end{proof}

\subsection{The category $\wrapped{K}{\mathbf{H}}$}
In this section we fix a compatible smoothing $\mathbf{H} = \{H_I : U_I (\varepsilon) \to \mathbb{R} \}$ of $\tilde{H}_K$ and, if necessary, replace $U_I (\varepsilon)$ with the  sublevel set $U_K (\delta )= \{p \in U_K (\varepsilon ) : H_K \leq \delta \}$ for some $\delta < \varepsilon$. Since $H_K$ is partially convex, Theorem~\ref{thm:exist} gives a well defined $A_\infty$-category $\wrapped{U_K (\delta )}{H_K}$. Within this category, there are several natural objects indexed by the combinatorial data of $K$. In particular, given any strata $L_\sigma$ of $L_K$, consider the complement 
\begin{align} L^{sm}_\sigma = L_\sigma  \backslash \left(  \sing{K}  \cap L_\sigma \right) 
\end{align}  which consists of the smooth points of $L_K$ lying in $L_\sigma$. Write $L^{sm}_K$ for the union $\cup_{\sigma \in K} L^{sm}_\sigma$ of all smooth points of $L_K$. For $p$ in a component of $L_\sigma^{sm}$, and sufficiently far away from $\sing{K}$, the sets $\tilde{U}_K (\delta )$ and $U_K (\delta )$ are locally the same and are isomorphic to the $\sqrt{2\delta}$-ball bundle of $L_\sigma$ in $T^* L_\sigma$. 

Let $p \in L^{sm}_\sigma \backslash \singe_K$ and denote the orthogonal Lagrangian  to $L_\sigma$ at $p$ in $U_K (\delta )$ as
\begin{align} L^\perp_p = \left\{ p +  \sum_{i \in [m]} z_i e_i  \in U_K (\delta ): \Re (z_j ) = 0 = \Im (z_k) \text{ for all } j \in \sigma , k \not\in \sigma  \right\}.
\end{align} 
We affix a grading and the trivial Pin structure to every $L_p^\perp$ to make it an object of $\wrapped{U_K (\delta )}{H_K}$.

\begin{definition} Let $K$ be a simplicial complex and $\mathbf{H} = \{H_I\}$ a compatible smoothing. The category $\wrapped{K}{\mathbf{H}}$ is the full subcategory of $\wrapped{U_K (\delta )}{H_K}$ generated by the objects $\left< L_p^\perp : p \in L^{sm}_\sigma \backslash \singe_K, \sigma \in K \right>$.
\end{definition}
As will be shown in Section~\ref{sec:hms}, the category $\wrapped{K}{\mathbf{H}}$ admits a fully faithful inclusion into the equivariant $A$-model mirror of a quasi-affine toric variety.

We now show that the generating collection $\left< L_p^\perp : p \in L^{sm}_\sigma \backslash \singe_K, \sigma \in K \right>$ for $\wrapped{K}{\mathbf{H}}$ is unnecessarily large and can be shrunk to a well controlled finite collection. The first reduction can be made by the following elementary lemma.
\begin{lemma} \label{lemma:basicwr1}
	Let  $Z$ be a smooth, connected component of $L^{sm}_K$.
	If $p, p^\prime \in Z$ then $L^\perp_{p} \simeq L^\perp_{p^\prime}$ in $\wrsm{\Delta}$.
\end{lemma}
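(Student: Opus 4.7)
The plan is to exhibit a compactly supported Hamiltonian isotopy of $U_K(\delta)$ that carries $L^\perp_p$ to $L^\perp_{p'}$, and then invoke the standard principle that such an isotopy induces a quasi-isomorphism between the corresponding brane objects in $\wrapped{U_K(\delta)}{H_K}$. First I would choose a smooth path $\gamma:[0,1]\to Z$ with $\gamma(0)=p$ and $\gamma(1)=p'$. Since $Z$ is a connected component of $L^{sm}_\sigma$ for some $\sigma\in K$, a neighborhood $\mathcal N$ of $\gamma([0,1])$ inside $U_K(\delta)$ is symplectomorphic, by Weinstein's Lagrangian neighborhood theorem, to a neighborhood of the zero section in $T^*Z$. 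Under this identification the family $\{L^\perp_{\gamma(t)}\}$ corresponds to the family of cotangent fibers over $\gamma(t)$, and in particular varies smoothly in $t$. I would shrink $\mathcal N$ so that its closure lies in $U_K(\delta)\setminus\sing{K}$, which is possible because $\gamma$ is contained in the smooth stratum.

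Next I would extend $\dot\gamma$ to a time-dependent vector field $V_t$ on $Z$ supported near $\gamma([0,1])$ and form its horizontal lift to $\mathcal N$ with respect to the cotangent bundle structure. This lift is precisely the Hamiltonian vector field of the fiberwise-linear function $F_t(q,\xi)=\xi(V_t(q))$. Multiplying $F_t$ by a cutoff function supported in $\mathcal N$ and extending by zero yields a compactly supported time-dependent Hamiltonian $\widetilde F_t$ on $U_K(\delta)$ whose time-one flow $\phi_1$ sends the cotangent fiber over $p$ to the cotangent fiber over $p'$, so that $\phi_1(L^\perp_p)=L^\perp_{p'}$.

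Since $\widetilde F_t$ has compact support disjoint from $\{H_K>C\}$ for a suitable $C$, the isotopy $\phi_t$ preserves conditions \eqref{eq:lagpr1} and \eqref{eq:lagpr2}, and the grading and Pin structure can be transported continuously along the contractible path $t\mapsto \phi_t(L^\perp_p)$. A standard continuation-style argument, counting solutions of Floer's equation with moving boundary conditions along $\phi_t$ and using the compactness in Lemma~\ref{lemma:compactness}, then produces a quasi-isomorphism $L^\perp_p\simeq L^\perp_{p'}$ in $\wrapped{U_K(\delta)}{H_K}$, hence in the full subcategory $\wrsm{K}$. The only delicate point I expect is arranging that the tubular neighborhood and the support of $\widetilde F_t$ stay away from $\sing{K}$, where both the Weinstein model and the smoothing $H_K$ of $\tilde H_K$ degenerate; but this is automatic once $\mathcal N$ is chosen sufficiently thin around the smooth path $\gamma$.
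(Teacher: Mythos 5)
Your argument is correct in outline but takes a genuinely different route from the paper. The paper does not use Hamiltonian isotopy invariance at all: it chains $p$ to $p'$ through geodesically convex neighborhoods $V_i$ of $Z\subset L_\sigma$, notes that by property (iii) of Definition~\ref{def:ham} the Hamiltonian restricted to the ball bundle $B_\delta(V_i)\subset T^*V_i$ is literally the kinetic energy, and then exhibits the minimal cogeodesic chord in $\mathcal{X}_w(L^\perp_{p_i},L^\perp_{p_{i+1}})$ together with its reverse as an explicit pair of mutually inverse morphisms. This buys concrete generators of the hom spaces that are reused later (the morphisms $x_{f_1,f_2}$ of Lemma~\ref{lemma:morphism1} are exactly such cogeodesics), at the cost of a small holomorphic-triangle count to see that the compositions are identities. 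Your approach is softer and more robust, but note one point that needs more care than you give it: the ``standard principle'' that a compactly supported Hamiltonian isotopy induces a quasi-isomorphism is nowhere established for the category $\wrapped{M}{H}$ of this paper. Lemma~\ref{lemma:compactness} controls the moduli spaces $\mathcal{R}^{d+1,\mathbf{w}}(\mathbf{x})$ with \emph{fixed} Lagrangian boundary conditions; the continuation maps you invoke require Gromov compactness for moduli spaces with \emph{moving} boundary conditions along $\phi_t(L^\perp_p)$, so you would have to re-run the suppressing-function/maximum-principle argument of Section~\ref{subsection:convexity} for that parametrized problem (plausible, since the isotopy is supported in a compact region away from $\sing{K}$ where $H_K=\tilde H_K$, but not free). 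Your identification of $L^\perp_{\gamma(t)}$ with cotangent fibers of $T^*Z$ is consistent with the paper's local description of $U_K(\delta)$ as the $\sqrt{2\delta}$-ball bundle over the smooth stratum, and your worry about keeping the tube away from $\sing{K}$ is the same implicit hypothesis the paper makes when choosing the $V_i$.
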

\begin{proof}
	Since $Z$ is connected, we can find a sequence $p = p_0, \ldots, p_m = p^\prime \in Z$ such that $p_i$ and $p_{i + 1}$ are contained in a geodesically convex neighborhood $V_i$ of $Z \subset L_\sigma$. But in every such neighborhood, the cotangent bundle ball bundle $B_\delta (V_i)  \subset T^* V_i$ of radius $\sqrt{2\delta}$ admits an  embedding  $i:(B_\delta(V_i), \partial B_\delta (V_i)) \to (\overline{U}_K (\delta ), \partial \overline{U}_K (\delta ))$. By property \ref{def:ham:2} of the compatible Hamiltonian $H_K$, we have that $H_K |_{B_\delta (V_i)}$ equals the kinetic energy relative to $V_i$. This shows that the minimal cogeodesic	$\delta \in \mathcal{X}_w (L^\perp_{p_i} , L^\perp_{p_{i +1}})$ and its negative
	$\delta^{-1} \in \mathcal{X}_w (L^\perp_{p_{i + 1}}, L^\perp_{p_i} )$ map to inverse morphisms in $\text{Hom}_{\wrapped{K}{\mathbf{H}}} (L^\perp_{p_i}, L^\perp_{p_{i + 1}} )$ and $\text{Hom}_{\wrapped{K}{\mathbf{H}}} (L^\perp_{p_{i + 1}}, L^\perp_{p_i})$ respectively.
\end{proof}
We now obtain a convenient indexing set for the smooth components of $L^{sm}_\sigma$.  For any $I \subseteq  [m]$, write $L_I$ for the Lagrangian defined in equation~\eqref{eq:stratadef} where $\sigma$ is replaced by $I$. We consider orthants of $L_I$ indexed by functions 
\begin{align} \label{eq:orthant}
\mathbf{c} : [m] \to \{\pm 1, -i\}.
\end{align} We say that such a function $\mathbf{c}$ is $I$-positive if $\mathbf{c}^{-1}(-i) = I$ and define
\begin{align*}
L_\mathbf{c} = \{(z_1, \ldots, z_n) \in L_I : \Re (\mathbf{c} (j) z_j)
> 0 \text{ for all }j \in [m]\}.
\end{align*}
We observe that $\sqcup_{\mathbf{c} \, I\text{-positive}} L_\mathbf{c}$ equals the complement of the coordinate hyperplanes in $L_I$. 
\begin{lemma} \label{lemma:basicwr2}
	The components of $L^{sm}_\sigma$ are in one to one
	correspondence with functions $f :\lk (\sigma) \to \{\pm 1\}$. 
\end{lemma}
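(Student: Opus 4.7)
The plan is to compute $L^{sm}_\sigma = L_\sigma \setminus \sing{K}$ directly in the coordinates supplied by \eqref{eq:stratadef}, identify $\sing{K} \cap L_\sigma$ as a union of coordinate walls, and then read off the connected components of the complement. I would parametrize $L_\sigma$ by the free coordinates $(y_i)_{i \in \sigma} \in \mathbb{R}_{\geq 0}^{\sigma}$ and $(x_j)_{j \notin \sigma} \in \mathbb{R}^{[m]\setminus \sigma}$, with the remaining coordinates vanishing. Because $\tilde{H}_K = \tfrac{1}{2} d_{L_K}^2$ vanishes on $L_K$, the function $\tilde{H}_K$ fails to be differentiable at a point $p \in L_\sigma$ precisely when $L_K$ is not locally a smooth submanifold at $p$, equivalently when some stratum $L_{\sigma'}$ with $\sigma' \ne \sigma$ also contains $p$. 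The reduction from ``$p \in \sing{K}$'' to ``multiple strata meet at $p$'' is the one place where I would invoke a standard tubular-neighborhood argument for the distance function to a smooth submanifold.

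Next, a direct inspection of \eqref{eq:stratadef} shows that for distinct $\sigma, \sigma' \in K$, a point $p \in L_\sigma$ also lies in $L_{\sigma'}$ iff $y_i = 0$ for every $i \in \sigma \setminus \sigma'$ and $x_j = 0$ for every $j \in \sigma' \setminus \sigma$. Setting $V_\sigma = \{j \in [m]\setminus \sigma : \sigma \cup \{j\} \in K\}$, which is precisely the vertex set of $\lk(\sigma)$, I would then deduce the identity
\begin{align*}
\sing{K} \cap L_\sigma \;=\; \bigcup_{i \in \sigma} \{y_i = 0\} \;\cup\; \bigcup_{j \in V_\sigma} \{x_j = 0\}.
\end{align*}
The containment $\supseteq$ uses the witnesses $\sigma' = \sigma \setminus \{i\}$ and $\sigma' = \sigma \cup \{j\}$, both of which lie in $K$ by simplicial closedness. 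For $\subseteq$: if $\sigma \setminus \sigma' \ne \emptyset$, pick $i \in \sigma \setminus \sigma'$ and note $y_i(p) = 0$; otherwise $\sigma \subsetneq \sigma'$, and for any $j \in \sigma' \setminus \sigma$ one has $\sigma \cup \{j\} \subseteq \sigma' \in K$, whence $j \in V_\sigma$ and $x_j(p) = 0$.

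Finally, the open locus $L^{sm}_\sigma$ is cut out of $L_\sigma$ by $y_i > 0$ for $i \in \sigma$, $x_j \ne 0$ for $j \in V_\sigma$, and no condition on $x_j$ for $j \in [m]\setminus (\sigma \cup V_\sigma)$. This is the product of the connected orthant $\mathbb{R}_{>0}^{\sigma}$, the space $(\mathbb{R}\setminus\{0\})^{V_\sigma}$, and a Euclidean factor, so only the punctured-line factors contribute disconnection. The assignment $[p] \mapsto \bigl(j \mapsto \textnormal{sgn}(x_j(p))\bigr)_{j \in V_\sigma}$ then furnishes the claimed bijection between $\pi_0(L^{sm}_\sigma)$ and the set of functions $\lk(\sigma) \to \{\pm 1\}$, upon identifying $\lk(\sigma)$ with its vertex set $V_\sigma$. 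The hard part is the clean identification of the singular trace in the middle step; once the walls are in hand, the component count is elementary.
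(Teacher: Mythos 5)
Your proof is correct and lands on the same key fact as the paper's proof --- that the only walls inside $L_\sigma$ which genuinely disconnect are the hyperplanes $\{x_j = 0\}$ indexed by the vertices $j$ of $\lk(\sigma)$ --- but you reach it by a somewhat different route. The paper argues with paths: it covers $L^{sm}_\sigma$ by orthants $L_{\mathbf{c}}$, shows two orthants lie in the same component iff a path between them can avoid every other stratum, notes that $\pi_k^{-1}(0)\cap L_\sigma$ is a stratum intersection exactly when $k$ is a vertex of the link, and dismisses the remaining stratum intersections as non-disconnecting. You instead compute the singular trace $L_\sigma\cap\bigcup_{\sigma'\neq\sigma}L_{\sigma'}$ in closed form as $\bigcup_{i\in\sigma}\{y_i=0\}\cup\bigcup_{j\in V_\sigma}\{x_j=0\}$ (both inclusions you give are right) and exhibit the complement as the product $\mathbb{R}_{>0}^{\sigma}\times(\mathbb{R}\setminus\{0\})^{V_\sigma}\times\mathbb{R}^{[m]\setminus(\sigma\cup V_\sigma)}$, from which $\pi_0$ is read off. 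Your version is more explicit and in one respect cleaner: the paper disposes of the non-link strata by asserting that their intersections with $L_\sigma$ are ``empty or of codimension greater than $1$,'' which is not literally accurate for the facets $\sigma\setminus\{i\}$ (these meet $L_\sigma$ in the codimension-one boundary walls $\{y_i=0\}$), whereas your product decomposition makes it transparent that those walls, being boundary faces of the closed orthant $\mathbb{R}_{\geq 0}^{\sigma}$, do not disconnect. Both arguments rely on the same unproved (and, in your case, explicitly flagged) identification of $\sing{K}\cap L_\sigma$ with the locus where several strata meet, so you are no worse off than the paper there; and both read ``functions $\lk(\sigma)\to\{\pm 1\}$'' as functions on the vertex set of the link, as equation~\eqref{eq:lsigf} confirms is intended.
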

\begin{proof}
	We must decide when two sets $L_{\mathbf{c}_1}$ and	$L_{\mathbf{c}_2}$ lie in the same component of $L_\sigma^{sm}$ for two $\sigma$-positive functions $\mathbf{c}_i$. Let $f_i$ be the restriction of each	$\mathbf{c}_i$ to $\lk (\sigma )$ and suppose $p_i \in	L_{\mathbf{c}_i}$. Let $\mathcal{P} (p_1, p_2, L_\sigma )$ be the set	of paths from $p_1$ to $p_2$ in $L_\sigma$. We observe that $p_1$ and $p_2$ lie in the same component of $L^{sm}_K$ if and only if there exists $\ell \in \mathcal{P} (p_1, p_2, L_\sigma )$ with $\text{im}(\ell ) \cap L_\tau = \emptyset$ for all $\tau \in K$ with $\tau \ne \sigma$. Indeed, $p_1$ and	$p_2$ are in the same component if and only if they are in the same path component and any path $\ell$ in $L^{sm}_\sigma$ will not intersect any other strata $L_\tau$ (since $L_\sigma  \cap L_\tau  \subset L_K \backslash L^{sm}_K$). 
	
	For $k \in [m]$, let $\pi_k$ be the projection to the $k$-th coordinate. It is clear that $\ell \in \mathcal{P} (p_1, p_2,L_\sigma )$ can be chosen so that $\pi_k (\ell (t)) \ne 0$ for all $t$ if and only if $\mathbf{c}_1 (k) = \mathbf{c}_2 (k)$. It also follows from the definition of $L_\sigma$ that $\pi_k^{-1} (0) \cap L_\sigma = L_\tau \cap L_\sigma$ if and only if $\tau = \sigma \cup \{k\} \in \lk (\sigma )$. For all $\sigma^\prime \in K \backslash \lk (\sigma )$, the intersection $L_\sigma \cap L_{\sigma^\prime}$ is either empty or has codimension greater than $1$, so that its complement in $L_\sigma$ is connected. Thus $\text{im} (\ell )$ must intersect $L_\tau$ for some $\tau \in K$ if and only if $f_1 \ne f_2$. 
\end{proof}
For $f : \lk (\sigma ) \to \{\pm 1\}$, denote the associated connected component of $L_\sigma^{sm}$ as 
\begin{align} \label{eq:lsigf} L_{\sigma, f} = \{(z_1, \ldots, z_n) \in L_\sigma : f(j) \Re (z_j) \in  \mathbb{R}_{\geq 0} \text{ for } j \in \lk (\sigma)\}. \end{align}
\begin{proposition} \label{prop:gencol}
	For each $\sigma \in K, f : \lk (\sigma	) \to \{\pm 1\}$, choose an element $ p_{\sigma , f} \in L_{\sigma , f}$. Then the collection \[\left< L^\perp_{p (\sigma, f) }  \right> \] generates $\wrapped{K}{\mathbf{H}}$.
\end{proposition}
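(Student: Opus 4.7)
The plan is to deduce the proposition directly from Lemmas~\ref{lemma:basicwr1} and \ref{lemma:basicwr2}. By definition, $\wrapped{K}{\mathbf{H}}$ is generated by every object of the form $L^\perp_q$ with $q \in L^{sm}_\sigma \setminus \singe_K$ for some $\sigma \in K$, so it suffices to exhibit, for each such $q$, an isomorphism $L^\perp_q \simeq L^\perp_{p_{\sigma,f}}$ for an appropriate pair $(\sigma, f)$.

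First, I will identify the correct indexing pair. The simplex $\sigma$ is determined by $q$ because the smooth loci $L^{sm}_\tau$ for distinct $\tau \in K$ are disjoint: any point of $L_\sigma \cap L_\tau$ with $\sigma \ne \tau$ lies in the singular locus $L_K \setminus L^{sm}_K$. Consequently each connected component of $L^{sm}_K$ is entirely contained in a single $L^{sm}_\sigma$. Now Lemma~\ref{lemma:basicwr2} parametrizes the components of $L^{sm}_\sigma$ by functions $f : \lk(\sigma) \to \{\pm 1\}$, so $q$ lies in a unique orthant $L_{\sigma,f}$ as in equation~\eqref{eq:lsigf}.

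Next, I will apply Lemma~\ref{lemma:basicwr1} with $Z = L_{\sigma,f}$. Since $Z$ is by construction a connected component of $L^{sm}_K$ containing both $q$ and the basepoint $p_{\sigma,f}$, the lemma supplies an isomorphism $L^\perp_q \simeq L^\perp_{p_{\sigma,f}}$ in $\wrapped{K}{\mathbf{H}}$, realized by a chain of minimal cogeodesic morphisms through a sequence of geodesically convex neighborhoods inside $L_\sigma$. This produces the required isomorphism between each generator and one of the finitely many prescribed objects.

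The one small point I expect to mention is that the chosen basepoints $p_{\sigma,f}$ must themselves lie in $L_{\sigma,f} \setminus \singe_K$ so that $L^\perp_{p_{\sigma,f}}$ is genuinely an object of $\wrapped{K}{\mathbf{H}}$; this can always be arranged, since each orthant $L_{\sigma,f}$ is unbounded while $\singe_K$ is a bounded neighborhood of $\sing{K}$, and the choice is immaterial up to isomorphism by the same Lemma~\ref{lemma:basicwr1} applied within $L_{\sigma,f} \setminus \singe_K$. Beyond this bookkeeping, I anticipate no substantive obstacle: the entire content of the proposition is already carried by the two preceding lemmas, and the argument reduces to the combinatorial observation that the components of $L^{sm}_K$ are indexed exactly by pairs $(\sigma, f)$.
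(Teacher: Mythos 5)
Your proof is correct and follows the paper's own argument exactly: the paper also deduces Proposition~\ref{prop:gencol} as an immediate application of Lemmas~\ref{lemma:basicwr1} and \ref{lemma:basicwr2}, and your write-up simply spells out the combinatorial bookkeeping (including the harmless requirement that the basepoints avoid $\singe_K$) that the paper leaves implicit.
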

\begin{proof} This is an immediate application of Lemmas~\ref{lemma:basicwr1} and \ref{lemma:basicwr2}.
\end{proof}
Proposition~\ref{prop:gencol} gives a convenient finite collection of generators for $\wrapped{K}{\mathbf{H}}$. The next theorem establishes the functorial behavior of $\wrapped{K}{\mathbf{H}}$.
\begin{theorem} \label{thm:functor}
	Let $\sigma \in K$ and $I \subseteq [m]$ a subset of points of $\lk (\sigma)$. Then there is a natural, fully faithful functor,
	\begin{align}
	\iota_K^{\sigma, I}: \wrapped{K|_I}{\mathbf{H}|_I} \to \wrapped{K}{\mathbf{H}}.
	\end{align}
\end{theorem}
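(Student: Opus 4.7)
The plan is to fix a sufficiently displaced $u_I \in \mathbb{C}^{[m] \setminus I}$ and use the affine embedding $\phi = \iota_I + u_I$ of Lemma~\ref{lemma:prop1} to lift Lagrangians and Floer data from $T^*\mathbb{R}^I$ into the slice $V_{u_I} \cap U_K(\delta)$. By Proposition~\ref{prop:gencol} it suffices to define the functor on the finite generating collection. For each generator $L_{p(\tau, f)}^\perp$ of $\wrapped{K|_I}{\mathbf{H}|_I}$, I set
\[ \iota_K^{\sigma, I}\bigl(L_{p(\tau, f)}^\perp\bigr) := L_{\phi(p(\tau, f))}^\perp. \]
Since $I \subseteq \lk(\sigma)$ forces $I \cap \sigma = \emptyset$, Lemma~\ref{lemma:prop1} gives $\phi(L_\tau) \subseteq L_{\sigma \cup \tau}$ and $\phi(p(\tau,f)) \in L_{\sigma \cup \tau}^{sm}$, and the sign data on the $[m] \setminus I$ directions is determined by the signs of the coefficients $a_j$ and $b_j$ in the decomposition of $u_I$, producing a generator of $\wrapped{K}{\mathbf{H}}$.

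Next I would lift the $A_\infty$-structure maps. For any popsicle $(S, u)$ representing an element of $\mathcal{R}^{d+1, \mathbf{w}}(\mathbf{x})$ for $\wrapped{K|_I}{\mathbf{H}|_I}$, its image $\tilde u = \iota_I \circ u + u_I$ maps into $V_{u_I}$ and satisfies the corresponding boundary, asymptotic, and Floer conditions for $\wrapped{K}{\mathbf{H}}$. This is a direct consequence of Definition~\ref{def:decomposable}: the splitting $X_{H_K} = \phi_*(X_{H_I}) + Y$ near $V_{u_I}$, with $Y$ vanishing on $L_{\sigma, [m] \setminus I}$, combined with the product structure of the standard complex structure on $\mathbb{C}^I \oplus \mathbb{C}^{[m] \setminus I}$, ensures that Floer's equation is preserved and that asymptotic trajectories lift to $\tilde x^k = \iota_I \circ x^k + u_I \in \mathcal{X}_{w^k}(L_{\phi(p_k)}^\perp, L_{\phi(p_{k+1})}^\perp)$. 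Identifying signed counts via this lift defines the structure maps of the $A_\infty$-functor.

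The fully faithful assertion then reduces to showing the lift induces a bijection on the relevant popsicle moduli. The nontrivial direction is surjectivity: any popsicle $v \colon S \to T^*\mathbb{R}^m$ with asymptotics in $V_{u_I}$ and boundary on $\{L_{\phi(p_k)}^\perp\}$ must have image in $V_{u_I}$. I would establish this by applying Lemma~\ref{lem:finalest} to the suppressing functions
\[ g_j = (x_j - a_j)^2 \text{ for } j \in [m] \setminus (I \cup \sigma), \qquad g_j = (y_j - b_j)^2 \text{ for } j \in \sigma, \]
where $u_I = \sum_{j \notin I \cup \sigma} a_j e_j + \sum_{j \in \sigma} i b_j e_j$. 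Each $g_j$ is $J$-plurisubharmonic for the standard $J$, and its Lie derivatives along $X_{H_K}$ and $J X_{H_K}$ are bounded by the decomposability together with the expanding property of Definition~\ref{def:expanding}; hence Lemma~\ref{lemma:Hconvex} shows $g_j$ suppresses $H_K$. The strip-like-end asymptotics pin $\pi_{[m] \setminus I} \circ v$ to $u_I$, and once the boundary values of $g_j \circ v$ have been controlled, the maximum principle of Lemma~\ref{lem:finalest} forces $g_j \circ v \equiv 0$, so $v(S) \subset V_{u_I}$.

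The main obstacle I anticipate is precisely this boundary control. Because $L_{\phi(p_k)}^\perp$ extends along the full half-space $L_{\sigma, [m] \setminus I}$, the maximum-principle estimate alone does not immediately collapse $v$ into $V_{u_I}$; one must convert the ``sufficiently displaced'' hypothesis on $u_I$, together with the sublevel restriction $\cap U_K(\delta)$ and the monotone behavior of the flow transverse to $L_{\sigma, [m] \setminus I}$ afforded by the expanding property, into a genuine confinement of the boundary data. Once the moduli bijection is established, the inverse map $v \mapsto \pi_I \circ v - \pi_I(u_I)$ is well defined and produces a popsicle in $T^*\mathbb{R}^I$ for $H_I$ by the decomposability splitting applied in reverse, completing the bijection and hence the fully faithful property of $\iota_K^{\sigma, I}$.
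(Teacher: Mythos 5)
Your setup---the affine embedding $\iota_I + u_I$, the definition of the functor on the generating orthogonals, and the lifting of chords and popsicles via the decomposability splitting---coincides with the paper's. The gap is in the confinement step, and it is twofold. First, the common zero locus of your functions $g_j$ is the set $\{y_j = b_j \text{ for } j\in\sigma,\ x_j = a_j \text{ for } j\notin I\cup\sigma\}$, which is a slab of real codimension $m-|I|$ leaving the directions $x_j$ ($j\in\sigma$) and $y_j$ ($j\notin I\cup\sigma$) completely unconstrained; these are exactly the directions in which the lifted Lagrangians $L^\perp_{\phi(p_k)}$ are free to move. So even if you proved $g_j\circ v\equiv 0$ for every listed $j$, you would have confined $v$ only to this slab, which strictly contains $V_{u_I}$ (of codimension $2(m-|I|)$), not to $V_{u_I}$ itself. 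Second, Lemma~\ref{lem:finalest} cannot produce $g_j\circ v\equiv 0$: it gives only $\|g_j\circ v\|_{L_\infty} \leq \|(g_j\circ v)|_{\partial S}\|_{L_\infty} + 2B$, and the additive constant $2B$ is there precisely because $g_j\circ\pi_M$ alone is not $I_{\mathbf{H}}$-convex---the cross terms computed in the proof of Lemma~\ref{lemma:Hconvex}, e.g.\ $\mathcal{L}_{X_H}\bigl((y_j-b_j)^2\bigr) = -2x_j(y_j-b_j)$, must be absorbed by a nonzero subharmonic $f\circ\pi_S$. Relatedly, the expanding property of Definition~\ref{def:expanding} is a monotonicity statement about the Hamiltonian flow, not a bound on Lie derivatives, so it cannot be fed into Lemma~\ref{lemma:Hconvex} as you propose.

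The paper closes both gaps without a maximum principle, and treats chords and discs separately. For Hamiltonian chords (fullness on morphism spaces) it uses the single function $F=\sum_{j\in\sigma}(y_j-b_j)^2+\sum_{j\notin I\cup\sigma}(x_j-a_j)^2$ (the sum of your $g_j$): one checks $X_KF=0$ on $L_q^\perp$, and the expanding property forces $F$ to be nondecreasing along flow lines starting there, strictly positive forever after once it becomes positive; since a chord must terminate on $L_{q'}^\perp$ where $F=0$, it must start in $\iota_I(L_p^\perp)$, and because the flow commutes with $\iota_I$ it remains in $V_{u_I}$. For the popsicle moduli it exploits that $\pi_{[m]\setminus I}$ is holomorphic near $V_{u_I}$ with $(\pi_{[m]\setminus I})_*(X_K)=Y$, so the projected map satisfies Floer's equation for $\tfrac12 d^2_{L_{\sigma,I}}$ with constant boundary values, and unique continuation forces it to be constant---this is what kills the residual free directions your $g_j$ do not see. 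You would need to supply arguments of this kind to complete the proof.
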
 
Here the word natural means that if $\sigma_1 \in K$, $I_1$ is a subset of points of $\lk (\sigma_1)$, $\sigma_2 \in \lk (\sigma_1)$ and $I_2$ is a subset of $\lk_{\lk_{I_1} (\sigma_1)} (\sigma_2 )$ then there is a natural equivalence between $\iota_K^{\sigma_2, I_2} \circ \iota^{\sigma_1, I_1}_{\lk_{I_2} (\sigma_2)}$ and $\iota_K^{\sigma_1, I_1}$.

\begin{proof}
	We appeal to Definitions~\ref{def:decomposable} and \ref{def:expanding} and the fact that $\mathbf{H} = \{H_I\}$ is a decomposable collection of expanding Hamiltonians. Let $u_I = \sum_{j \not\in I \cup \sigma} a_j e_j + \sum_{j \in \sigma } i b_j e_j$ be sufficiently displaced and $\iota_I : \mathbb{C}^I \to \mathbb{C}^m$ the affine inclusion sending $w$ to $u_I + w$. We also write $\iota_I$ for its restriction to the inclusion $U_{K|_I} (\delta ) \to U_{K} (\delta)$. 
	
	Suppose  $p \in L_{K|_I} \backslash \singe_{K|_I}$ and write $q = \iota_I (p) = p + u_I$. We define the functor $\iota_K^{\sigma, I}$ on generating objects by taking 
	\begin{align}
	\iota_K^{\sigma, I} (L_p^\perp ) = L_q^\perp .
	\end{align}
	In a neighborhood of $q$, property~\ref{def:ham:2} and the fact that $\mathbf{H}$ is decomposable implies that 
	\begin{align} \label{eq:hamdecomp} H_K = \tilde{H}_K = \tilde{H}_{K|_I} + \frac{1}{2} d_{L_{\sigma, [m] \backslash I}}^2 \end{align} and its Hamiltonian vector field decomposes as $X_K = X_{K|_I} + Y$ where $Y$ is the Hamiltonian vector field of $\frac{1}{2} d_{L_{\sigma, [m] \backslash I}}^2$ in $U$.  Let $\varphi_t^K$ be the Hamiltonian flow of the $H_K$. Using the fact that $\mathbf{H}$ is decomposable, we have that $\iota_I \circ \varphi^{K|_I}_t = \varphi^K_t \circ \iota_I$. Thus, suppose $p^\prime \in L_{K|_I} \backslash \singe_{K|_I}$, $q^\prime = \iota_I (p^\prime)$ and $x : [0,1] \to U_{K|_I} (\delta )$ is an integral curve in $\mathcal{X}_w (L_p^\perp , L_{p^\prime}^\perp )$ from equation~\eqref{eq:defflow}. Then $\iota_I \circ x \in \mathcal{X}_w (L_q^\perp , L_{q^\prime}^\perp )$ inducing the function
	\begin{align} \iota_K^{\sigma, I} (x) = \iota_I \circ x \end{align}
	on morphisms. Our task is thus two-fold; first we show that this assignment is full (as the faithful property is obvious) and then we demonstrate that it is compatible with the $A_\infty$-structure maps.

	Recall from Lemma~\ref{lemma:prop1} that, in a sufficiently small neighborhood $U$ of the origin in $\mathbb{C}^{[m] \backslash I}$, equation~\eqref{eq:decomp} gives the decomposition
	\begin{align} 
	L_{K} \cap \left[(\mathbb{C}^I \oplus U ) + u_I\right] = \iota_I ( L_{K|_I} ) \oplus \left(L_{\sigma, [m] \backslash I} \cap U \right).
	\end{align}
	where $L_{\sigma, [m] \backslash I}$ is defined in equation~\eqref{eq:sigbk}.  We may also decompose the orthogonal $L^\perp_q$ as the direct sum $\iota_I (L_p^\perp) \oplus i \cdot L_{\sigma, [m] \backslash I}$ (which we identify with its intersection in $U_\delta (K)$).
	
	Let $F : \mathbb{C}^m \to \mathbb{R}$ be the  function 
	\begin{align*} F (z) & =  \frac{1}{2}\left( d_{i\cdot L_{\sigma, I}} ( \pi_{[m] \backslash I} (z - u_I ) )\right)^2 \\ & = \sum_{j \in \sigma} (y_j - b_j)^2 + \sum_{j \not\in I \cup \sigma} (x_j - a_j)^2. \end{align*}
	Note that for any $q^\prime \in \iota_I (L^{sm}_{K|_I})$ we have that $F (L_{q^\prime}^\perp ) = 0$. 
	
	We claim that, for any $z \in L_q^\perp$, $(X_K F) (z) = 0$. Explicitly, for $z = \sum_{j = 1}^m (x_j + iy_j) e_j$, one computes
	\begin{align} \label{eq:yvf}
	Y_{z} = \sum_{j \in \sigma} -x_j \partial_{y_j} + \sum_{j \not\in I \cup \sigma} y_j \partial_{x_j}.
	\end{align} and 
	\begin{align*} 
	Y_z F & = \sum_{j \in \sigma} -x_j (y_j - b_j) + \sum_{j \not\in I \cup \sigma} y_j (x_j - a_j).
	\end{align*}
	Now, if $z = (z_I, z_{[m] \backslash I}) \in L_q^\perp$ then there are real constants $c_j$ for $j \not\in I \cup \sigma$ and $d_j$ for $j \in \sigma$ such that 
	\begin{align}\label{eq:zmI}
	z_{[m] \backslash I} = \sum_{j \in \sigma } (d_j + i b_j) e_j + \sum_{j \not\in I \cup \sigma} (a_j + i c_j) e_j  .
	\end{align}
	verifying $Y_z F (z)= 0$ for all $z \in L_{q}^\perp$. As $F$ is independent of the variables indexed by $I$, we also conclude $(X_K F) (z) = 0$.
	
	Now suppose $x : [0,1] \to U_\delta (K)$ is any $w X_{K}$ integral curve for $w \in \mathbb{Z}_{> 0}$ such that $x (0) \in L_q^\perp$. Then we have $(X_K F) (x(0)) = 0$ and, as $H_K$ is expanding, $(X_K F)(x(t_0)) \geq 0$ for all $t_0 \geq 0$ (this can be seen by applying the expanding condition to each summand in $F$). In particular, if $F(x(t_0) ) > 0$ for any $t_0$, then $F(x(t)) > 0$ for all $t \geq t_0$. 
	
	Using equations~\eqref{eq:yvf} and \eqref{eq:zmI} we have that if $x(0) = (z_I, z_{[m] \backslash I})$, then $F(x(t)) > 0$ for all $t > 0$ if and only if there exists $j$ such that either $c_j \ne 0$ or $d_j \ne 0$. In particular, if $\varphi^K_t : U_\delta (K) \to U_{\delta} (K)$ is the time $t$ flow of $X_K$, then 
	\begin{align} F \left( \varphi^K_t \left(L_q^\perp \backslash  \iota_I(L_p^\perp ) \right) \right) > 0 \end{align}
	for all $t > 0$. Intuitively, all elements of $L_q^\perp$ which are not in the image of $L_p^\perp$ move away from the subspace $\mathbb{C}^m \oplus i\cdot L_{\sigma, I}$. But if $p^\prime \in L^{sm}_{K|_I}$  and $q^\prime = \iota_I (p^\prime )$, then $F (L_{q^\prime}^\perp) = 0$. This implies that if $x  \in \mathcal{X}_w (L_q^\perp , L_{q^\prime}^\perp )$, then $F (x(1)) = 0$ so that $x(0) \in \iota_I (L_p^\perp )$ and $x \in \iota_K^{\sigma, I} (\mathcal{X}_w (L_p^\perp, L_{p^\prime}^\perp))$. Thus $\iota_K^{\sigma, I}$ is fully faithful on morphisms.
	
	Now suppose $p_0, \ldots, p_d \in L_{K|_I} \backslash \singe_{K|_I}$ and write $L_i$ for $L_{p_i}^\perp$ and $\tilde{L}_i$ for $L_{\iota_I (p_i)}^\perp$. Let $x^k \in \mathcal{X}_{w^k} (L_k, L_{k + 1})$ for every $0 \leq k \leq d$ so that $\mathbf{x} = \{x^0, \ldots, x^d\}$  and denote $\tilde{x}^k$ for $\iota_K^{\sigma, I} (x^k)$ and $\tilde{\mathbf{x}}$ for $\{\tilde{x}^0, \ldots, \tilde{x}^d\}$. Since $\iota_I$ is holomorphic and commutes with $\varphi_t$, composing with $\iota_I$ induces a map moduli space $\Theta_{\mathbf{x}, I} : \mathcal{R}^{d + 1, \mathbf{w}} (\mathbf{x} ) \to \mathcal{R}^{d + 1, \mathbf{w}}( \tilde{\mathbf{x}})$. Using the (local) decomposition in equation~\eqref{eq:hamdecomp} we also have that there is a neighborhood $V$ of $\iota_I (U_{K|_I} (\delta))$ such that projection $\pi_{[m] \backslash I} : V \to \mathbb{C}^{[m] \backslash I}$ is holomorphic and $(\pi_{[m] \backslash I})_* (X_K) = Y$. This implies that if $(S,u ) \in \mathcal{R}^{d + 1, \mathbf{w}} (\mathbf{x} )$ and $u^{-1} (V) = {S \backslash C} $ for some compact set in the interior of $S$, we have that $\pi_{[m] \backslash I} \circ u$ satisfies Floer's equation for  $H = \frac{1}{2} d_{L_{\sigma, I}}^2$. However, as $(\pi_{[m] \backslash I} \circ u) |_{\partial_k S} = 0$,  unique continuation for $\bar{\partial}_{\mathbf{I}_H}$ (see \cite[Thoerem~2.3.2]{ms})  implies that $\pi_{[m] \backslash I} \circ u$ is constant. Thus $C = \emptyset$ and $\Theta_{\mathbf{x},I}$ is an isomorphism, concluding the proof.
\end{proof}
We can use Theorem~\ref{thm:functor} to explicitly describe certain morphism groups and compositions in $\wrapped{K}{\mathbf{H}}$. Let $\sigma \in K$ and suppose $f_1, f_2 : \lk (\sigma ) \to \{\pm 1\}$ are any functions. We write $f_1 \preceq f_2$ if $f_2 (k) \leq f_1 (k)$ for all $k \in \lk (\sigma )$. 
\begin{lemma} \label{lemma:morphism1}
	Let $\sigma \in K$, $f_1, f_2 : \lk (\sigma ) \to \{\pm 1\}$ and $p_i \in L_{\sigma, f_i}$ for $i = 1,2$. If $f_1 \prec f_2$ then 
	\begin{align} \label{eq:hompositive}
	\Hom_{\wrapped{K}{\mathbf{H}}} (L_{p_1}^\perp , L_{p_2}^\perp ) & = \mathbb{K} \cdot x_{f_1, f_2}
	\end{align}
	and, for $f_1 \prec f_2 \prec f_3$, we have $\mu^2 (x_{f_2, f_3} \otimes x_{f_1, f_2} ) = x_{f_1, f_3}$. 
\end{lemma}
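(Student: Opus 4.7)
The plan is to reduce the computation of the morphism space and the composition to a product of one-dimensional $A_2$ computations (Example~\ref{example:a2}), leveraging the decomposable and stable properties of the compatible smoothing $\mathbf{H}$.

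By Lemma~\ref{lemma:basicwr1} I may move $p_1, p_2$ to any preferred representatives of $L_{\sigma, f_1}, L_{\sigma, f_2}$. I choose them so that their coordinates in $[m] \setminus (\sigma \cup \lk(\sigma))$ agree and are sufficiently displaced, which lets me apply Theorem~\ref{thm:functor} to reduce the computation to the subcomplex $K|_{\sigma \cup \lk(\sigma)}$. On this reduced picture the stability (Definition~\ref{def:stable}) and decomposability (Definition~\ref{def:decomposable}) of $\mathbf{H}$, together with \ref{def:ham:1} and \ref{def:ham:2}, allow $H_K$ to be written locally near $p_i$ as a sum of one-variable summands $H^{(k)}$, one per vertex in $\sigma \cup \lk(\sigma)$, each of which is an $A_2$-Hamiltonian. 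The Hamiltonian flow preserves this product structure, and the orthogonal Lagrangians $L_{p_i}^\perp$ are themselves products of local factors $L_{p_i, k}^\perp$.

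Applying a unique-continuation argument analogous to the one used at the end of the proof of Theorem~\ref{thm:functor}, I can factor not only the path spaces $\mathcal{X}_w(L_{p_1}^\perp, L_{p_2}^\perp)$ but also the full moduli spaces $\mathcal{R}^{d+1, \mathbf{w}}(\mathbf{x})$ as products over coordinates. For each $k \in \lk(\sigma)$ with $f_1(k) = f_2(k)$ and for each $j \in \sigma$, the local pair $(L_{p_1, k}^\perp, L_{p_2, k}^\perp)$ coincides (or is a pair of identical cotangent fibers) and contributes only identity-type classes in the direct limit. For each $k$ in $J := \{k \in \lk(\sigma) : f_1(k) = +1, f_2(k) = -1\}$, which is nonempty precisely because $f_1 \prec f_2$, the pair is that of Example~\ref{example:a2} on opposite orthogonal fibers, contributing a one-dimensional space with a canonical generator. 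The resulting tensor product of factors is one-dimensional, establishing \eqref{eq:hompositive} and defining $x_{f_1, f_2}$.

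For the composition, given $f_1 \prec f_2 \prec f_3$, observe that the conditions $f_1(k) \neq f_2(k)$ and $f_2(k) \neq f_3(k)$ cannot hold simultaneously for any $k \in \lk(\sigma)$. Consequently, in each coordinate factor the product $\mu^2$ reduces either to the identity acting on an $A_2$-generator or to a trivial identity-identity composition, and globally this yields $x_{f_1, f_3}$. The principal obstacle is verifying the product decomposition of the holomorphic disc moduli spaces $\mathcal{R}^{d+1, \mathbf{w}}(\mathbf{x})$ with multiple boundary labels; this requires extending the unique-continuation argument from Theorem~\ref{thm:functor} to each factor and invoking the expanding property (Definition~\ref{def:expanding}) to control boundary flows between the factored pieces.
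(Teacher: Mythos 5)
Your overall strategy --- reducing to a product of one--dimensional $A_2$ computations --- does not go through for a general simplicial complex $K$, and this is a genuine gap. The stability axiom (Definition~\ref{def:stable}) only asserts $H_I = H_{I\backslash\{i\}} + H_{\{i\}}$ when $K|_I$ is a cone over $K|_{I\backslash\{i\}}$; iterating it yields a sum of one--variable $A_2$--Hamiltonians only when the restricted complex is a full simplex. Decomposability (Definition~\ref{def:decomposable}) likewise only splits $X_H$ near \emph{sufficiently displaced} points, i.e.\ far from the deepest stratum, and says nothing about the smoothing region $\singe_K$ near the origin. For, say, $K=\{\emptyset,\{1\},\{2\}\}$ (the punctured--plane example), $H_K$ near the origin is not a sum of one--variable Hamiltonians, so neither the path spaces nor the disc moduli spaces factor coordinatewise as you claim. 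This coordinate splitting is exactly the extra hypothesis that distinguishes Lemma~\ref{lemma:morphism2} (where the assumption $f_2^{-1}(-1)\backslash f_1^{-1}(-1)\in\lk(\sigma)$ forces the reduced complex to be a simplex, and stability is then legitimately invoked) from the present lemma, which carries no such hypothesis.

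The paper's proof avoids the singular region altogether rather than decomposing it. Setting $I=\{j: f_1(j)\ne f_2(j)\}$ and applying Theorem~\ref{thm:functor}, one reduces to $\sigma=\emptyset$ with $f_1\equiv +1$, $f_2\equiv -1$, $p_1=\sum e_j$, $p_2=-p_1$. The key step is then an explicit confinement claim: under cogeodesic flow $\varphi_t(x,y)=(x+ty,y)$ the entire Lagrangian $\varphi_t(L_{p_1}^\perp)$ remains in $U_\emptyset\backslash\singe_K$ for all $t\ge 0$ (one checks $d_{L_{\{j\}}}(\varphi_t(z))\ge d_{L_\emptyset}(\varphi_t(z))$ and that the flow never enters the strips containing $\singe_K$). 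Since $H_K$ equals the standard kinetic energy $\tfrac12\sum y_i^2$ on that region by Definition~\ref{def:ham:2}, the morphism space is the standard one--dimensional cotangent--fibre computation, independent of the combinatorics of $K$, and the composition is the count of the unique holomorphic triangle joining the three cogeodesics. To salvage your argument you would need to replace the unavailable product decomposition with such a confinement statement; without it, the asserted factorization of $\mathcal{X}_w$ and of $\mathcal{R}^{d+1,\mathbf{w}}(\mathbf{x})$ is unsupported.
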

\begin{proof}
	Letting $I = \{j \in [m]: f_1 (j) \ne f_2 (j)\}$ and applying Theorem~\ref{thm:functor}, to prove equation~\eqref{eq:hompositive}, it suffices to consider $\sigma = \emptyset$ and $f_1, f_2$ the constant functions on $[m]$ with values $1$, $-1$ respectively. Let $p_1 = \sum_{j = 1}^m e_j$ and $p_2 = - p_1$.  Taking $\varepsilon$ sufficiently small, it follows from equation~\eqref{eq:lsigf} that $p_i \in L_{\emptyset, f_i} \backslash \singe_K$ so that $L^\perp_{p_i} \subset U_\emptyset \backslash \singe_K$. 
	
	By property \ref{def:ham:2}, we have that $H_K|_{U_\emptyset \backslash \singe_K} = \tilde{H}_K = 1/2 \sum y_i^2$. Consider the Hamiltonian flow $\varphi_t$ of $H_K = \tilde{H}_K$ on $U_{\emptyset} \backslash \singe_K$. This is given by cogeodesic flow
	\begin{align} \varphi_t (x_1, y_1, \ldots, x_m, y_m) = (x_1 + t y_1,y_1, \ldots, x_m + t y_m, , y_m). \end{align} \\
	\textit{Claim}: For any $t \geq 0$ and $\varphi_t ( L_{p_1}^\perp) \subset U_\emptyset \backslash \singe_K$. \\ \\
	To verify this claim, take $z = (x_1, y_1, \ldots, x_m, y_m)$ and observe that $d_{L_\emptyset} (z) \leq d_{\sigma} (z)$ for all $\sigma \in K$ if and only if $y_i \leq |x_i|$ for every $1 \leq i \leq m$. In other words, if $V_i = \{z \in \mathbb{R}^{2n}: y_i \leq |x_i| \}$ then $U_\emptyset = \cap_{i = 1}^m V_i$ and $\partial U_{\emptyset} = \cup_{i = 1}^m \left[\partial V_i \cap (\cap_{j \ne i} V_j)
	\right]$. Let $W_i = \left\{ (x_1, y_1, \ldots, x_m, y_m) : x_i  \in [-\varepsilon , \varepsilon] , y_i \in [0, \varepsilon ] \right\}$ and observe that  $\partial V_i \subset  W_i$. Moreover, one checks that $\singe_K \subset \cup_{i = 1}^m \left[ W_i \cap  (\cap_{j \ne i} V_j)  \right]$. 
	
	Now assume that $\varepsilon < 1$, $z = (1, r_1, \ldots, 1, r_m) \in L_{p_1}^\perp$ and $t \geq 0$. Observe that $d_{L_\emptyset} (\varphi_t(z)) = \sqrt{ \sum r_i^2}$ and, for any $1 \leq j \leq m$, 
	\begin{align} d_{L_{\{j\}}}
	(\varphi_t(z)) = \begin{cases}  \sqrt{ (1 + tr_j)^2 +  \sum_{i \ne j} r_i^2} & \text{ if } r_j \geq 0, \\ \sqrt{ (1 + tr_j)^2 +  \sum_{i = 1}^m r_i^2} & \text{ if } r_j < 0. \end{cases} \end{align}
	Thus $d_{L_{\{j\}}} (\varphi_t(z)) \geq d_{L_\emptyset} (\varphi_t(z))$ for all $t \geq 0$ and $\varphi_t (L_{p_1}^\perp ) \subset U_{\emptyset}$. 
	
	To see that $\varphi_t (L_{p_1}^\perp ) \cap \singe_K = \emptyset$, suppose that $z = (1, r_1, \ldots, 1, r_m) \in L_{p_1}^\perp$ and $t_0  = \min \left\{t \geq 0 : \varphi_t (z) \in \cup_{i = 1}^m \left[ W_i \cap  (\cap_{j \ne i} V_j)  \right]\right\}$. Then there exists an $i \in [m]$ for which $\varphi_{t_0} (z) \in W_i \cap   (\cap_{j \ne i} V_j)$.
	Examining the $(x_i,y_i)$-coordinates of $\varphi_{t_0} (z)$ we have that $x_i = 1 + t_0 r_i \in [-\varepsilon, \varepsilon]$ and $y_i = r_i \in [0, \varepsilon]$. But since $r_i \geq 0$, $t_0 \geq 0$ and $\varepsilon < 1$, we obtain a contradiction and verify the claim. 
	
	With the claim, we see that, for $w \geq \frac{2}{\varepsilon}$, there is a unique $w X_K$ integral curve $x (t) = (1 -  2 t, -2 / w,\ldots, 1 - 2t , -2 / w)$ lying in $\mathcal{X}_w (L^\perp_{p_1}, L^\perp_{p_2} )$. These define the single morphism $x_{f_1, f_2}$ under passage to the limit. Composition takes place within $U_\emptyset \backslash \singe_K$ by counting the unique holomorphic triangle which connects the concatenation of cogeodesics from $p_1$ to $p_2$ and $p_2$ to $p_3$ to the unique geodesic from $p_1$ to $p_3$.
\end{proof}
Lemma~\ref{lemma:morphism1} classifies the morphisms between Lagrangian orthogonals flowing in a negative direction. The following lemma partially addresses morphisms associated to the opposite direction.
\begin{lemma} \label{lemma:morphism2}
	Let $\sigma \in K$, $f_1, f_2 : \lk (\sigma ) \to \{\pm 1\}$ and $p_i \in L_{\sigma, f_i}$ for $i = 1,2$. If $f_1 \prec f_2$ and $f_2^{-1} (-1) \backslash f_1^{-1} (-1) \in \lk (\sigma )$, then 
	\begin{align}
	\Hom_{\wrapped{K}{\mathbf{H}}} (L_{p_2}^\perp , L_{p_1}^\perp ) & = 0.
	\end{align}
\end{lemma}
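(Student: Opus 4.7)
The plan is to reduce the claim via the fully-faithful functor of Theorem~\ref{thm:functor} and the stability property to a product of $A_2$-Hamiltonians, and then handle the single $A_2$-case by tracking the flow of $\tilde{X}_K$ directly. Set $I := f_2^{-1}(-1) \setminus f_1^{-1}(-1)$. By hypothesis $I \in \lk (\sigma)$, so $\sigma \cup I \in K$ and in particular $I \in K$; hence $K|_I = \mathcal{P}(I)$ is the full simplex on $I$. By Lemma~\ref{lemma:basicwr1} I may replace $p_1, p_2$ by representatives in $L_{\sigma, f_1}$ and $L_{\sigma, f_2}$ respectively that agree on every coordinate outside $I$, and (after rescaling) have common exterior coordinate vector $u_I \in \mathbb{C}^{[m] \setminus I}$ sufficiently displaced. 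Setting $q_1 := \sum_{k \in I} e_k$ and $q_2 := -\sum_{k \in I} e_k$ in $L_{K|_I}$, we have $p_i = q_i + u_I$, so by the functor $\iota_K^{\sigma, I}$ of Theorem~\ref{thm:functor} the claim is equivalent to
\[
\Hom_{\wrapped{\mathcal{P}(I)}{\mathbf{H}|_I}}(L_{q_2}^\perp, L_{q_1}^\perp) = 0.
\]

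Since $\mathcal{P}(I) = \textnormal{cone}(\mathcal{P}(I \setminus \{i\}))$ for each $i \in I$, iterating Definition~\ref{def:stable} yields $H_I = \sum_{k \in I} H_{\{k\}}$, a sum of $A_2$-Hamiltonians supported on pairwise disjoint coordinate factors of $T^* \mathbb{R}^I = \prod_{k \in I} T^* \mathbb{R}$. Hence $X_{H_I} = \sum_{k \in I} X_{H_{\{k\}}}$ decomposes as a sum of vector fields on disjoint factors, the flow is the product of the individual $A_2$-flows, and the Lagrangians split as $L_{q_i}^\perp = \prod_{k \in I} L_{\pm e_k}^\perp$. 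Consequently any chord in $\mathcal{X}_w(L_{q_2}^\perp, L_{q_1}^\perp)$ would project to chords in $\mathcal{X}_w(L_{-e}^\perp, L_{+e}^\perp)$ in each $A_2$-factor, further reducing the problem to showing the latter is empty for all $w > 0$ in the single $A_2$-case.

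For the $A_2$ case, Definition~\ref{def:ham:2} gives $H = \tilde{H}_K$ outside the singular neighborhood $\singe_K$, so equation~\eqref{eq:hamvf} provides the explicit vector field: $\tilde{X}_K = y \partial_x$ on $\{|x| > y\}$ and $\tilde{X}_K = -x \partial_y$ on $\{|x| < y\}$. A trajectory starting at $(-1, y_0) \in L_{-e}^\perp$ with $y_0 > 0$ therefore moves horizontally to the right until $x = -y_0$, then pivots to move vertically upward with $x$ frozen at $-y_0 < 0$; for $y_0 < 0$ it runs leftward to $x = -\infty$; for $y_0 = 0$ it is stationary. In every case the $x$-coordinate stays bounded above by $\sqrt{2 \delta}$, which we choose small enough so that $\sqrt{2 \delta} < 1$. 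Inside $\singe_K$ the smoothing $H$ can deform the flow, but the trajectory remains confined to $U_K(\delta)$, itself a $\sqrt{2\delta}$-neighborhood of $L_K$; in particular it cannot cross the wall $L_{\{1\}}$ to reach $L_{+e}^\perp \subset \{x = 1\}$. Hence $\mathcal{X}_w(L_{-e}^\perp, L_{+e}^\perp) = \emptyset$ for every $w > 0$.

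The principal difficulty is the $A_2$-local analysis inside $\singe_K$, where the isotopy condition only requires $X_H$ and $X_{\tilde{H}_\delta}$ to be tangentially isotopic without prescribing $X_H$ outright; one must invoke the $i$-convexity of $H$ together with the placement of $Z_\delta$ to rule out any trajectory crossing the wall $L_{\{1\}}$ during its passage through the singular neighborhood.
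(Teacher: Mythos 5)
Your proof follows the paper's argument essentially step for step: reduce via Lemma~\ref{lemma:basicwr1} and Theorem~\ref{thm:functor} to the full simplex on the coordinates where $f_1$ and $f_2$ differ, use stability to split the Hamiltonian into a sum of $A_2$-Hamiltonians on disjoint factors so the flow is a product flow, and conclude because the $A_2$-flow cannot carry a fiber over a negative real across the wall $L_{\{1\}}$ to a fiber over a positive real. Your index bookkeeping (taking $I = f_2^{-1}(-1)\setminus f_1^{-1}(-1)$ with the given $\sigma$) is in fact cleaner than the paper's stated choice, and your explicit trajectory analysis, including the flagged subtlety inside $\singe_K$, spells out what the paper only asserts in one line about sign preservation.
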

\begin{proof} Take $I = f^{-1}_1 (-1)$ and apply Theorem~\ref{thm:functor} (with $\sigma = f^{-1}_2 (-1)$) to reduce to the case where $\sigma = \emptyset$ and $f_1, f_2$ the constant functions on $[m]$ with values $1$, $-1$, respectively. In this case, the assumption that $f_2^{-1} (-1) \backslash f_2^{-1} (-1) = [m] \in K$ implies that the simplicial complex $K$ is an $(m - 1)$-simplex. 
	Take $p_1 = \sum_{j = 1}^m e_j$ and $p_2 = - p_1$. As the $\mathbf{H}$ is stable, the Hamiltonian for a simplex is a sum of $A_2$-Hamiltonians on each coordinate. This implies that the flow $X_K$ factors as a flow for the respective coordinate Hamiltonians. However, the flow of an $A_2$-Hamiltonian with initial condition over positive reals maintains a positive real value, implying the result.
\end{proof}

Theorem~\ref{thm:functor} also allows us to further simplify our generating collection. Suppose $\sigma \in K$ and $k \in \lk (\sigma )$ with $\tau = \sigma \cup \{k\}$. For any $\mathbf{c} : \lk (\sigma ) \backslash \{k\} \to \{\pm 1\}$ let $\mathbf{c}_\pm : \lk (\sigma ) \to \{\pm 1\}$ be the extension 
\begin{align} \mathbf{c}_\pm & = \begin{cases}
\mathbf{c} (j) & \text{ if } j \ne k, \\ \pm 1 & \text{ otherwise}.
\end{cases} \end{align} 
Take $\mathbf{c}_0 = \mathbf{c}$ on  $\lk (\sigma ) \backslash \{k\}$ and $-i$ on $k$. 
\begin{corollary} \label{cor:extriangle}
	There is an exact triangle 
	\begin{equation*}
	\begin{tikzpicture}[node distance=2cm, auto]
	\node (A) {$L_{\mathbf{c}_+}$};
	\node (B) [right of=A, node distance=4cm]{$L_{\mathbf{c}_-}$}; 
	\node (C) [below of=A, right of=A, node distance=2cm]{$L_{\mathbf{c}_0}$};
	\draw[->] (A) to node {$x_{\mathbf{c}_+, \mathbf{c}_-}$} (B);
	\draw[->] (B) to node {} (C);
	\draw[->] (C) to node {$[1]$} (A);
	\end{tikzpicture}
	\end{equation*}
\end{corollary}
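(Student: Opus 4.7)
I would reduce the statement, via the full and faithful local-to-global functor of Theorem~\ref{thm:functor}, to the one-dimensional $A_2$-exact triangle. Set $I = \{k\} \subseteq \lk(\sigma)$, so that the restricted complex $K|_{\{k\}} = \{\emptyset, \{k\}\}$ is precisely the $A_2$-setup of Example~\ref{example:a2}. Choose a sufficiently displaced vector
\[ u_{\{k\}} = \sum_{j \notin \{k\} \cup \sigma} a_j\, e_j + \sum_{j \in \sigma} i\, b_j\, e_j \]
with $\mathrm{sign}(a_j) = \mathbf{c}(j)$ for every $j \in \lk(\sigma)\setminus\{k\}$. Let $q_+ > 0$ and $q_- < 0$ be small real numbers, and $q_0 = ir$ for small $r > 0$, viewed as points in $T^* \mathbb{R}$. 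Inspection of equation~\eqref{eq:lsigf} shows that $\iota_{\{k\}} + u_{\{k\}}$ carries $q_\pm$ into the component $L_{\sigma, \mathbf{c}_\pm}$ and $q_0$ into the component of $L^{sm}_\tau$ labelled by $\mathbf{c}_0|_{\lk(\tau)}$. Theorem~\ref{thm:functor} then supplies a fully faithful functor
\[ \iota := \iota_K^{\sigma, \{k\}} \colon \wrapped{K|_{\{k\}}}{\mathbf{H}|_{\{k\}}} \longrightarrow \wrapped{K}{\mathbf{H}} \]
taking $L^\perp_{q_\pm} \mapsto L_{\mathbf{c}_\pm}$ and $L^\perp_{q_0} \mapsto L_{\mathbf{c}_0}$; by Lemma~\ref{lemma:morphism1}, the generator of $\Hom(L^\perp_{q_+}, L^\perp_{q_-})$ is sent, up to scalar, to $x_{\mathbf{c}_+, \mathbf{c}_-}$.

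Next, I would prove the triangle in the local $A_2$-model, namely that
\[ L^\perp_{q_+} \xrightarrow{x_{+,-}} L^\perp_{q_-} \longrightarrow L^\perp_{q_0} \xrightarrow{[1]} \]
is exact in $\wrapped{K|_{\{k\}}}{\mathbf{H}|_{\{k\}}}$. This is the standard $A_2$-triangle for a one-dimensional partially wrapped Fukaya category, and is treated in the one-dimensional references \cite{hkk, lee}. Concretely, one computes $\Hom(L^\perp_{q_-}, L^\perp_{q_0})$ and $\Hom(L^\perp_{q_0}, L^\perp_{q_+}[1])$ by the cogeodesic enumeration used in the proof of Lemma~\ref{lemma:morphism1}, applied now to the smoothed domain $Z_\delta$ of equation~\eqref{eq:A2domain} depicted in Figure~\ref{fig:A2}: in each case the $A_2$-Hamiltonian flow produces exactly one short arc crossing the singular locus, generating a one-dimensional morphism space in degree zero. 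The unique pseudoholomorphic triangle in $Z_\delta$ bounded by the three Lagrangians realises the $A_\infty$-relations identifying $\mathrm{cone}(x_{+,-}) \simeq L^\perp_{q_0}$. Since $\iota$ preserves mapping cones, the triangle descends to the statement of the corollary.

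The main obstacle is the middle step. Lemmas~\ref{lemma:morphism1} and \ref{lemma:morphism2} compare only generators lying over a common simplex, whereas $L^\perp_{q_0}$ sits over the strictly larger simplex $\tau = \sigma \cup \{k\}$; hence the two connecting morphisms are not directly produced by those lemmas. Establishing them requires a local Floer-theoretic computation across the singular locus of $\tilde{H}_K$, which is the genuine geometric content of the statement. Once this one-dimensional $A_2$-computation is in hand, the corollary follows purely formally from the fully faithful embedding of Step 1.
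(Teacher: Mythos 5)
Your proposal follows the paper's own route: the paper's proof is exactly the reduction via Theorem~\ref{thm:functor} with $I = \{k\}$ to the one-dimensional $A_2$ model of Example~\ref{example:a2}, where the exact triangle is asserted to hold. Your write-up is a more detailed (and more candid) version of the same argument — in particular, your observation that the one-dimensional triangle itself requires a Floer computation not covered by Lemmas~\ref{lemma:morphism1} and~\ref{lemma:morphism2} is accurate, since the paper simply cites Example~\ref{example:a2} for that step without proof.
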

\begin{proof}
	This is an immediate consequence of the fact that such an exact triangle occurs for the simplicial complex in the one dimensional Example~\ref{example:a2}. Thus taking $I = \{k\}$ and applying Theorem~\ref{thm:functor} we obtain our result.
\end{proof}

\section{\label{sec:hms}Equivariant mirror symmetry}

Pursuing a distinct and more algebraic approach, we show in the present section that there is an equivalence between subcategories of the mirrors. 


\subsection{$K$-monomial categories}

We begin by establishing some useful notation for what follows. For any $n \in \mathbb{N}$, let
$\{e_1, \ldots, e_n\}$ be the standard basis for $\mathbb{Z}^n$ and, if $I \subseteq \{1,
\ldots, n\}$, we again take $e_I := \sum_{i \in I} e_i$. We define the monomial category
$\mathcal{C}_n$ to be the $\mathbb{K}$-linear graded category with the following structure.
\begin{align}
Ob (\mathcal{C}_n) & = \{C_I : I \subseteq [n]\},  \\
\Hom_{\mathcal{C}_n}^m (C_{I_1}, C_{I_2} ) & = \begin{cases} \mathbb{K} \cdot \left< e_{I_2
	\backslash I_1} \right> & \text{ if } I_1 \subseteq I_2 \text{ and } m = 0, \\
0 & \text{ otherwise.} \end{cases} \\
e_{I_2 \backslash I_3} \circ e_{I_1 \backslash I_2} & = e_{I_1 \backslash I_3} \text{ for any
} I_1 \subseteq I_2 \subseteq I_3 .
\end{align}
For any $A_\infty$-category $\mathcal{C}$, we let $\tw (\mathcal{C}_n)$ be the category of
twisted complexes. This is also an $A_\infty$-category which is pretriangulated, so that its
derived category $H^0 (\tw (\mathcal{C}))$ is a triangulated category. 


In particular, we consider the shifted Koszul complexes 
\begin{equation} \label{eq:Koszdef}
\mathcal{K}_{I}\{ J\} = \left( \bigoplus_{I^\prime \subseteq I} C_{I^\prime \cup
	J}[|I \backslash I^\prime|]
, K_{I,J} \right)
\end{equation}
in $\tw (\mathcal{C}_n)$ where $I \cap J = \emptyset$. These complexes are objects in $\tw
(\mathcal{C})$ with differential $K_{I, J}$. To write down this differential, order $I$
with some bijection $\sgn : I
\to [|I|]$ and take
\begin{equation} K_{I, J} = \sum_{I^\prime \subseteq I} \left( \sum_{i \in I \backslash
	I^\prime} (-1)^{\sgn (i) + | I^\prime |} e_{I^\prime \cup \{i\} \backslash I^\prime } \right).
\end{equation}
One can easily show that these objects, by definition, fit into exact triangles
\begin{figure}[ht]
	\begin{tikzpicture}[node distance=2cm, auto]
	\node (A) {$\mathcal{K}_{I}\{J\}$};
	\node (B) [right of=A, node distance=4cm]{$\mathcal{K}_{I} \{J \cup\{k\} \}$}; 
	\node (C) [below of=A, right of=A, node distance=2cm]{$\mathcal{K}_{I \cup \{k\}}
		\{J\}$};
	\draw[->] (A) to node {$\mathbf{e}_k$} (B);
	\draw[->] (B) to node {} (C);
	\draw[->] (C) to node {[1]} (A);
	\end{tikzpicture}
	\caption{\label{fig:disttriangle} Distinguished triangle of shifted Koszul complexes}
\end{figure}
for any $I \cap J = \emptyset$ with $k \not\in I \cup J$. Here the map $\mathbf{e}_k$ is defined to be $\oplus_{I^\prime \subset I} e_{I^\prime \cup \{k\} \backslash I}$. We will write $\mathcal{K}_I$ for $\mathcal{K}_I \{\emptyset\}$.

We now give a variant of the notion of a fully faithful functor in order to be able to describe
our categories axiomatically. Suppose $\mathcal{C}$ is any category with finitely many objects
indexed by subsets of $[n]$ and take $\mathcal{D}$ to be any category.

\begin{definition} \label{def:Kff}
	A functor $F : \mathcal{C} \to \mathcal{D}$ will be called $\Delta$-faithful (full)  if $I \subset J$ implies
	\begin{equation*}
	F : \Hom_{\mathcal{C}} (C_I , C_J ) \to \Hom_{\mathcal{D}} (F (C_I ) , F (C_J) )
	\end{equation*}
	is injective (surjective).
\end{definition}

For the following definition, we will say that two subsets are comparable if one is contained in the other and will say a tuple $(I_1, \ldots, I_r)$ of subsets is comparable with $(J_1, \ldots, J_r)$ if either all $I_k$ are contained in $J_k$ or vice versa.
With this in mind, and the notation established above, we can now define the main category of interest.  As before, we take $K$ to be an abstract simplicial complex on the set $[n]$. 

\begin{definition} \label{def:delmon}
	An $A_\infty$-category $\mathcal{D}$ is called a $K$-monomial category if 
	\begin{enumerate} 
		\item \label{def:delmon1} There is a $\Delta$-fully faithful functor $F : \mathcal{C}_n \to \mathcal{D}$ which is
		bijective on objects.
		\item  \label{def:delmon2} For any collection $(I, J, \{k\})$ of disjoint sets with $I \in K$ and $I \cup
		\{k \} \not\in K$, the map
		\begin{equation}
		F( \mathbf{e}_k) : F ( \mathcal{K}_{I} \{ J\} )\longrightarrow F (\mathcal{K}_{I} \{ J \cup
		\{k\}\})
		\end{equation}
		is an isomorphism in $\tw (\mathcal{D})$.
		\item \label{def:delmon3}
		For any triple $I, J, L$ of disjoint subsets in $[n]$, there is a quasi-isomorphism
		\begin{align} 
		\Hom_{\mathcal{D}}^\bullet (D_{I \sqcup L} , D_{J \sqcup L} ) \approx
		\Hom^\bullet_{\mathcal{D}} (D_I , D_J ).
		\end{align}
		Furthermore, this quasi-isomorphism is natural with respect to a comparable triple $(I^\prime, J^\prime, L)$.
	\end{enumerate}
\end{definition}
Note that, using the distinguished triangle in Figure \ref{fig:disttriangle}, we can rephrase
property \ref{def:delmon2} by saying that $F (\mathcal{K}_{I \cup \{k\}} \{J\})$ is zero in
$\tw (\mathcal{D})$ for all $I \in K$ and $I \cup \{k\} \not\in K$. 
The motivation for this definition is that we can verify properties
\ref{def:delmon1}, \ref{def:delmon2} on both the $A$-model and $B$-model side of homological
mirror symmetry. This then reduces the desired homological equivalence to our main algebraic
result, Theorem \ref{thm:mainalg}, discussed below.

We fix $\mathcal{D}$ as a  $K$-monomial
category and  assume that it is a minimal $A_\infty$-category. We denote its objects as $D_I =
F (C_I )$.
\begin{lemma} \label{lemma:prop2prime}
	For any collection $(I, J)$ of disjoint sets with $I \not\in K$,  $F
	(\mathcal{K}_{I} \{ J\} )$ is a zero object in  $\tw (\mathcal{D})$.
\end{lemma}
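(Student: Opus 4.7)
The plan is to prove this by induction on $|I|$, reducing the general case ``$I \notin K$'' to the building block case covered by Definition~\ref{def:delmon}\ref{def:delmon2}. As the remark immediately following Definition~\ref{def:delmon} notes, property~\ref{def:delmon2} combined with the distinguished triangle of Figure~\ref{fig:disttriangle} is equivalent to saying $F(\mathcal{K}_{I_0 \cup \{k\}}\{J\}) = 0$ in $\tw(\mathcal{D})$ whenever $I_0 \in K$, $I_0 \cup \{k\} \notin K$, and $(I_0, J, \{k\})$ are disjoint. So the lemma amounts to extending this vanishing from non-faces that are ``$K$-simplex plus one bad vertex'' to arbitrary non-faces.

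For the induction, fix any disjoint $J$ and let $I \notin K$. In the base case, $I$ is a minimal non-face of $K$, so for any choice of $k \in I$ we have $I \setminus \{k\} \in K$ while $I = (I \setminus \{k\}) \cup \{k\} \notin K$. The disjointness of $(I \setminus \{k\}, J, \{k\})$ follows from $I \cap J = \emptyset$, and the reformulation above gives $F(\mathcal{K}_I\{J\}) = 0$ directly.

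For the inductive step, suppose $I \notin K$ is not a minimal non-face, so there exists a proper subset $I'' \subsetneq I$ with $I'' \notin K$. Choose any $\ell \in I \setminus I''$; then $I \setminus \{\ell\} \supseteq I''$, so $I \setminus \{\ell\} \notin K$, and moreover $\ell \notin J$ since $\ell \in I$ and $I \cap J = \emptyset$. Apply the distinguished triangle from Figure~\ref{fig:disttriangle} with the roles $I \leftrightarrow I \setminus \{\ell\}$, $J \leftrightarrow J$, $k \leftrightarrow \ell$:
\begin{equation*}
F(\mathcal{K}_{I \setminus \{\ell\}}\{J\}) \xrightarrow{F(\mathbf{e}_\ell)} F(\mathcal{K}_{I \setminus \{\ell\}}\{J \cup \{\ell\}\}) \longrightarrow F(\mathcal{K}_I\{J\}) \xrightarrow{[1]}.
\end{equation*}
Both $(I \setminus \{\ell\}, J)$ and $(I \setminus \{\ell\}, J \cup \{\ell\})$ are disjoint pairs with $I \setminus \{\ell\} \notin K$ and $|I \setminus \{\ell\}| < |I|$, so the inductive hypothesis makes the first two terms zero. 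Hence $F(\mathcal{K}_I\{J\})$ is the cone of a map between zero objects in $\tw(\mathcal{D})$, and is therefore zero.

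There is no real obstacle here beyond being careful with the bookkeeping on which sets remain disjoint under the operations ``remove $\ell$ from $I$'' and ``adjoin $\ell$ to $J$''; the combinatorial content is just that every non-face of $K$ contains a minimal non-face, and every step of the induction only shrinks $I$ by one element, preserving the hypothesis $I \notin K$ because the witness $I''$ is preserved.
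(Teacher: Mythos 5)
Your proof is correct and follows essentially the same route as the paper: the paper's proof is a one-line ``straightforward induction'' on $n = \min\{|I'| : I \setminus I' \in K\}$ using property~(2) of Definition~\ref{def:delmon} and the distinguished triangle of Figure~\ref{fig:disttriangle}, which is exactly the argument you have written out (your induction on $|I|$ with the minimal non-face serving as the base case is just a cosmetic repackaging of the same descent). The bookkeeping on disjointness and the reduction via $\mathcal{K}_{I\setminus\{\ell\}}\{J\} \to \mathcal{K}_{I\setminus\{\ell\}}\{J\cup\{\ell\}\} \to \mathcal{K}_I\{J\}$ are both handled correctly.
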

\begin{proof}
	Utilizing property \ref{def:delmon2} and the distinguished triangle in Figure
	\ref{fig:disttriangle}, this follows from a straightforward induction argument on
	$n = \min \{|I^\prime | : I \backslash I^\prime \in K\}$.
\end{proof}

\begin{lemma} \label{lemma:notcomp}
	If $\sigma, \sigma^\prime \in K$ are not comparable, then $\Hom_{\mathcal{D}} (D_{\sigma} , D_{\sigma^\prime }) = 0$.
\end{lemma}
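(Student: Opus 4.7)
\emph{Plan.}
I would first apply property~(3) of Definition~\ref{def:delmon} to reduce the claim to a statement about disjoint subsets. Writing $L = \sigma \cap \sigma'$, $I = \sigma \setminus \sigma'$ and $J = \sigma' \setminus \sigma$, the non-comparability of $\sigma, \sigma'$ forces both $I$ and $J$ to be nonempty (and automatically disjoint), while each lies in $K$ as a subset of a simplex of $K$. Property~(3) gives the quasi-isomorphism
\[
\Hom^\bullet_{\mathcal{D}}(D_\sigma, D_{\sigma'}) \simeq \Hom^\bullet_{\mathcal{D}}(D_I, D_J),
\]
so it suffices to show the right-hand side vanishes whenever $I, J \in K$ are nonempty and disjoint.

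From this reduction I would proceed by induction on $|I|$ (with inner induction on $|J|$) using the distinguished triangle of Figure~\ref{fig:disttriangle}. Fixing $k \in I$, the triangle at $(\emptyset, J, k)$ reads $D_J \xrightarrow{e_k} D_{J \cup \{k\}} \to F(\mathcal{K}_{\{k\}}\{J\}) \to D_J[1]$; applying $\Hom(D_I,-)$ yields a long exact sequence in which $\Hom(D_I, D_{J \cup \{k\}})$ is controlled by the $\Delta$-faithfulness of $F$---it is $\mathbb{K} \cdot e_J$ exactly when $I = \{k\}$, and otherwise reduces via (3) to a strictly smaller instance of the induction since $I \setminus \{k\}$ and $J \cup \{k\}$ remain disjoint and nonempty. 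The remaining term $\Hom(D_I, F(\mathcal{K}_{\{k\}}\{J\}))$ is treated in parallel using the triangle at $(\{k\}, \emptyset, j)$ for $j \in J$, where property~(2) and Lemma~\ref{lemma:prop2prime} collapse the associated Koszul cones whenever an adjacent simplex leaves $K$, feeding smaller configurations back into the induction.

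The base case $|I| = 1$ is the technical heart. Writing $I = \{k\}$, I need to show $\Hom(D_{\{k\}}, D_J) = 0$ when $k \notin J$ and $J \neq \emptyset$. Combining the triangle $D_\emptyset \xrightarrow{e_k} D_{\{k\}} \to F(\mathcal{K}_{\{k\}}) \to D_\emptyset[1]$ with the isomorphism $F(\mathcal{K}_{\{k\}}) \simeq F(\mathcal{K}_{\{k\}}\{\{j\}\})$ supplied by property~(2) when $\{k, j\} \notin K$, and invoking the $\Delta$-faithful identifications $\Hom(D_{\{k\}}, D_{\{k\}}) = \mathbb{K}$ and $\Hom(D_{\{k\}}, D_{\{k,j\}}) = \mathbb{K} \cdot e_j$, the post-composition map $e_k \circ (-)$ is forced to be an isomorphism, producing the desired vanishing by exactness.

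The main obstacle, in my view, is the residual situation where no Koszul collapse from property~(2) is immediately available because $\{k\} \cup J$ remains inside $K$. Handling this requires invoking the naturality clause of (3): translating by a suitable auxiliary $L'$ disjoint from $\{k\} \cup J$ with $L' \cup \{k\} \cup J \notin K$ activates Lemma~\ref{lemma:prop2prime} after the shift, and then naturality transports the vanishing back through the comparable triple to the original configuration.
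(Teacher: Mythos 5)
Your opening move---using property (3) with $L = \sigma \cap \sigma'$ to reduce to disjoint, nonempty $I$ and $J$---is exactly the paper's first step, and your engine is the same as the paper's: acyclicity of Koszul objects attached to non-faces, property (1) for nested pairs, and an induction. The difference is organizational. The paper applies $\Hom_{\mathcal{D}}(D_\sigma, -)$ once to the single acyclic complex $\mathcal{K}_{\sigma' \cup \sigma_2 \cup \{k\}}$ (with $\sigma_2 \subset \sigma$ chosen so that $\sigma' \cup \sigma_2 \in K$ but $\sigma' \cup \sigma_2 \cup \{k\} \notin K$), kills every summand $\Hom_{\mathcal{D}}(D_\sigma, D_J)$ except $J = \sigma'$ by induction on $|\sigma| + |\sigma'|$ together with properties (1) and (3), and disposes of the edge case $\sigma = \sigma_2 \cup \{k\}$ with a spectral sequence; you instead peel off one vertex at a time with the rank-one triangles $D_J \to D_{J \cup \{k\}} \to F(\mathcal{K}_{\{k\}}\{J\})$ inside a double induction.

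Two steps of your plan do not close. First, the inductive step is asserted rather than proved: in the long exact sequence obtained by applying $\Hom_{\mathcal{D}}(D_I, -)$ to your triangle, knowing $\Hom_{\mathcal{D}}(D_I, D_{J \cup \{k\}})$ does not determine $\Hom_{\mathcal{D}}(D_I, D_J)$ unless you also control $\Hom_{\mathcal{D}}(D_I, F(\mathcal{K}_{\{k\}}\{J\}))$ in two adjacent degrees \emph{and} identify the maps; when $|I| = 1$ the middle term is $\mathbb{K}$, not zero, so everything hinges on showing a specific composition is an isomorphism, and this in turn silently uses vanishing statements such as $\Hom_{\mathcal{D}}(D_{\{k\}}, D_\emptyset) = 0$ that the axioms do not hand you. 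The phrase ``treated in parallel, feeding smaller configurations back into the induction'' sits exactly where the content of the proof should be. Second, and more seriously, your resolution of the residual case $\{k\} \cup J \in K$ fails: an auxiliary $L'$ with $L' \cup \{k\} \cup J \notin K$ need not exist (take $K$ to contain the full simplex on $[n]$, so that every superset of $\{k\} \cup J$ is a face), and even when it does exist, property (3) identifies $\Hom_{\mathcal{D}}(D_{\{k\} \sqcup L'}, D_{J \sqcup L'})$ with the original $\Hom_{\mathcal{D}}(D_{\{k\}}, D_J)$, so ``transporting the vanishing back'' is circular---you would still owe a new argument for the shifted configuration, e.g.\ the paper's summand-by-summand analysis of the full Koszul complex. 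To be fair, the paper's own proof also tacitly requires $\sigma \sqcup \sigma' \notin K$ (its $\sigma_1$ is empty otherwise), so you correctly identified a delicate point; but the repair you propose for it is not valid.
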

\begin{proof}
	The proof is by induction on $n = |\sigma | + | \sigma^\prime |$. The statement holds
	vacuously if $n = 0$ or $1$. Assume that the lemma holds for all pairs $\tilde{\sigma}, \tilde{\sigma}^\prime$ with $|\tilde{\sigma}| + | \tilde{\sigma}^\prime | \leq n$.
	
	By property \ref{def:delmon3} of Definition~\ref{def:delmon}, we may assume that $\sigma \cap \sigma^\prime = \emptyset$.
	Let $\sigma = \sigma_1 \sqcup \sigma_2$ so that $\sigma^\prime \cup \sigma_2 \in K$ and
	for every $k \in \sigma_1$, $\sigma^\prime \cup \sigma_2 \cup \{k \} \not\in K$. Choose
	one such element $k \in \sigma_1$. By property \ref{def:delmon2} of Definition
	\ref{def:delmon}, the Koszul complex $\mathcal{K}_{\sigma^\prime \cup \sigma_2 \cup \{k \}}
	\simeq 0$ in $\tw (\mathcal{D})$. We examine the morphism complex in $\tw (\mathcal{D} )$
	\begin{align} \label{eq:zero}
	0 &=  \Hom_{\mathcal{D}}^\bullet (D_\sigma , \mathcal{K}_{\sigma^\prime \cup \sigma_2 \cup
		\{k\}} ) = \bigoplus_{J \subseteq \sigma^\prime \cup \sigma_2 \cup \{k\}}
	\Hom_{\mathcal{D}}^\bullet (D_\sigma , D_J).
	\end{align}
	Note that for any $J \subseteq \sigma^\prime \cup \sigma_2 \cup \{k\}$, either  $J =
	\sigma^\prime$,    $J \subsetneq \sigma^\prime$ or $J \cap (\sigma_2 \cup \{k\} ) \ne
	\emptyset$. In the second case, $\Hom_{\mathcal{D}}^\bullet (D_\sigma , D_J ) = 0$ by property
	\ref{def:delmon1} of Definition \ref{def:delmon}. In the third case, take $L = J \cap \sigma$
	and applies property \ref{def:delmon3} of Definition~\ref{def:delmon} to obtain $\Hom_{\mathcal{D}}^\bullet (D_\sigma , D_J
	) = \Hom_{\mathcal{D}}^\bullet (D_{\sigma \backslash L} , D_{J \backslash L} )$. Assuming that
	$\sigma \ne \sigma_2 \cup \{k\}$, either $J \subsetneq \sigma$ in which case this morphism
	space is zero by property \ref{def:delmon1} of Definition \ref{def:delmon}, or $\sigma
	\backslash L$ and $J \backslash L$ are not comparable with  $|\sigma \backslash L| + |J
	\backslash L | < n$. Applying the induction hypothesis yields a zero morphism space in the
	latter instance as well. Thus when $\sigma \ne \sigma_2 \cup \{k\}$ the only summand that is
	potentially non-zero in the complex in equation \eqref{eq:zero} is $\Hom^\bullet_{\mathcal{D}}
	(D_\sigma , D_{\sigma^\prime} )$, but since the complex is acyclic, this must be zero as well.
	
	To take care of the last case where $\sigma = \sigma_2 \cup \{k\}$, consider the collection of
	all $J \subseteq \sigma \sqcup \sigma^\prime$ for which $\sigma \subseteq J$.
	Owing again to properties \ref{def:delmon1} and \ref{def:delmon3} of Definition \ref{def:delmon}, the summands in
	equation \eqref{eq:zero} corresponding to these subsets forms the exact subcomplex $\left(
	\bigwedge^* \mathbb{K}^{\sigma^\prime} , \wedge (\sum_{j \in \sigma^\prime} e_j) \right)$. Thus
	on first page of the spectral sequence associated to the multicomplex
	$\Hom_{\mathcal{D}}^\bullet (D_\sigma , D_{\sigma \sqcup \sigma^\prime})$, we are again left
	with $\Hom^\bullet_{\mathcal{D}} (D_\sigma , D_{\sigma^\prime} ) = 0$.
\end{proof}

Now let us consider the category $\tw (\mathcal{D})$. Recall that a collection
of objects $S = \{A_i \}_{i \in I}$ is said to generate a pre-triangulated $A_\infty$-category
$\mathcal{A}$ if every object in $\mathcal{A}$ can be constructed through a sequence of taking
cones, shifts and summands starting with the objects in $S$.
\begin{lemma}  \label{lemma:generate}
	The collection of objects $\{D_\sigma : \sigma \in K \}$ generates the
	category $\tw (\mathcal{D})$.
\end{lemma}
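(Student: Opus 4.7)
The plan is to induct on $|L|$ to prove the stronger statement that every object $D_L \in \mathcal{D}$, indexed by any $L \subseteq [n]$, lies in the pre-triangulated subcategory generated by $\{D_\sigma : \sigma \in K\}$. Since the objects of $\mathcal{D}$ are in bijection with subsets of $[n]$ (via $F$), and $\tw(\mathcal{D})$ is generated by the objects of $\mathcal{D}$, this will suffice.

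For the base case, note that $\emptyset \in K$ by the convention that abstract simplicial complexes contain the empty simplex, so $D_\emptyset$ is a generator. For the inductive step, fix $L \subseteq [n]$ with $|L| \geq 1$. If $L \in K$, then $D_L$ is a generator by definition, and we are done. Assume therefore that $L \notin K$. Applying Lemma~\ref{lemma:prop2prime} with $I = L$ and $J = \emptyset$, the Koszul complex
\[ \mathcal{K}_L = \mathcal{K}_L\{\emptyset\} = \Bigl(\bigoplus_{I' \subseteq L} D_{I'}[\,|L \setminus I'|\,],\; K_{L,\emptyset}\Bigr) \]
is a zero object in $\tw(\mathcal{D})$.

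The key step is to convert this vanishing into an explicit decomposition of $D_L$. Using the distinguished triangle of Figure~\ref{fig:disttriangle} iteratively (in the form $\mathcal{K}_{I \cup \{k\}}\{J\} \simeq \textnormal{Cone}(\mathcal{K}_I\{J\} \to \mathcal{K}_I\{J \cup \{k\}\})$, with base case $\mathcal{K}_\emptyset\{J\} = D_J$), the twisted complex $\mathcal{K}_L$ is built as an iterated cone starting from the single-object complexes $D_{I'}$ for $I' \subseteq L$. Fixing any $k \in L$ and writing $\mathcal{K}_L \simeq \textnormal{Cone}(\mathcal{K}_{L\setminus\{k\}}\{\emptyset\} \to \mathcal{K}_{L\setminus\{k\}}\{\{k\}\})$, the summand $D_L$ (corresponding to $I' = L$, with zero shift) appears precisely as the ``top'' term $D_L \subset \mathcal{K}_{L \setminus \{k\}}\{\{k\}\}$, while every other summand of $\mathcal{K}_L$ has the form $D_{I'}$ with $I' \subsetneq L$. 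Since $\mathcal{K}_L \simeq 0$ in $\tw(\mathcal{D})$, rearranging this iterated cone (using that $\tw(\mathcal{D})$ is pre-triangulated) expresses $D_L$ as an iterated cone of shifts of the objects $\{D_{I'} : I' \subsetneq L\}$.

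By the inductive hypothesis applied to each $I' \subsetneq L$ (where $|I'| < |L|$), each $D_{I'}$ lies in the subcategory generated by $\{D_\sigma : \sigma \in K\}$. Hence $D_L$ does too, completing the induction. The main obstacle is the bookkeeping of the iterated cone structure in the second paragraph above; concretely, one must verify that the totalization $\mathcal{K}_L$ being quasi-isomorphic to zero really does present $D_L$ (rather than some other summand) as a cone of the remaining summands. This follows because $D_L$ sits as the unique unshifted summand ($|L \setminus L| = 0$) at one extreme of the Koszul differential $K_{L, \emptyset}$, so it may be split off as the final cone in the iterated construction.
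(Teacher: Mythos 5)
Your proof is correct and follows essentially the same route as the paper's: both invoke Lemma~\ref{lemma:prop2prime} to get $\mathcal{K}_L \simeq 0$ for $L \notin K$ and then peel off the unshifted top summand $D_L$ as an iterated cone of the remaining summands $D_{I'}$ with $I' \subsetneq L$, closing the induction. The only difference is the induction variable — you use $|L|$ while the paper uses the minimal number of vertices one must delete from $L$ to land in $K$ — and your choice is if anything the cleaner one, since every proper subset automatically has strictly smaller cardinality.
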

\begin{proof}
	Let $\mathcal{A} \subset \tw (\mathcal{D} )$ be the category generated by $\{D_\sigma : \sigma \in K \}$.
	For any set $I \subseteq [n]$, we let $d (I)$ be the minimal number of elements which must be removed from $I$ in order to obtain a simplex in $K$. If $d(I) = 0$, then $D_I \in \mathcal{A}$ by definition. Assume $D_J \in \mathcal{A}$ for $d(J) < d(I)$.  By the definition of $\mathcal{K}_I$ in equation~\eqref{eq:Koszdef} and the induction hypothesis, we have that all objects in the twisted complex $\mathcal{K}_I$ other than $D_I$ are in $\mathcal{A}$. But by Lemma \ref{lemma:prop2prime}, $\mathcal{K}_I$ is quasi-isomorphic to zero in $\tw (\mathcal{D})$ which implies that $D_I \in \mathcal{A}$.
\end{proof}

Now, recasting $K$ as a poset with $\sigma \preceq \sigma^\prime$ iff $\sigma \subseteq
\sigma^\prime$, we can consider the poset representation algebra $A_K$
(see \cite{gs}). The category of finitely generated modules
$\catmod{A_K}$ has irreducible projective objects $P_{\sigma}$ for all $\sigma \in
K$. We let $\mathcal{P}_K$ be the full subcategory containing these objects. As
$\mathcal{P}_K$ generates $\tw (\oplus_{\sigma \in K} P_\sigma ) = \tw
(\catmod{A_K} )$, any functor $G : \mathcal{P}_K \to \tw (\mathcal{D})$ where
$\mathcal{D}$ is a dg or $A_\infty$-category, induces a functor $D(G) : \tw (\catmod{A_K}
) \to \tw (\mathcal{D})$. 

It follows from the definition of $A_K$ that if $\mathcal{D}$ is any $K$-monomial
category, then there is a faithful functor $i_{K}: \mathcal{P}_K \to
\mathcal{D}$ which takes $P_\sigma$ to $F(C_\sigma )$. Thus every $K$-monomial category
$\mathcal{D}$ contains $\mathcal{P}_K$ as a subcategory. 

\begin{theorem} \label{thm:mainalg}
	For any  $K$-monomial category $\mathcal{D}$, $i_K$ is a full and faithful functor inducing an $A_\infty$-equivalence $D (i_K) : \tw ( \catmod{A_K} ) \to \tw ( \mathcal{D} )$.
\end{theorem}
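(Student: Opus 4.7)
The plan is to prove the theorem in two stages. First, I will show that $i_K : \mathcal{P}_K \to \mathcal{D}$ is fully faithful, matching the morphism spaces of $\mathcal{P}_K$ (which are $\mathbb{K}$ concentrated in degree $0$ when $\sigma \subseteq \sigma'$ and zero otherwise) with those of $\mathcal{D}$. Second, I will extend to twisted complexes: full faithfulness of an $A_\infty$-functor on a generating subcategory passes automatically to the pretriangulated hull, and essential surjectivity of $D(i_K)$ is given directly by Lemma~\ref{lemma:generate}, which asserts that $\{D_\sigma : \sigma \in K\}$ already generates $\tw(\mathcal{D})$.

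Full faithfulness of $i_K$ splits into three cases. When $\sigma \subseteq \sigma'$, condition~\ref{def:delmon1} gives an isomorphism $\Hom_{\mathcal{D}}^\bullet(D_\sigma, D_{\sigma'}) \cong \Hom_{\mathcal{C}_n}^\bullet(C_\sigma, C_{\sigma'}) = \mathbb{K}$ concentrated in degree $0$. When $\sigma$ and $\sigma'$ are non-comparable, Lemma~\ref{lemma:notcomp} supplies the vanishing directly. The remaining case $\sigma' \subsetneq \sigma$ is harder; I will first apply the translation isomorphism of~\ref{def:delmon3} with $L = \sigma'$ to reduce to showing $\Hom_{\mathcal{D}}^\bullet(D_\tau, D_\emptyset) = 0$ for $\tau := \sigma \setminus \sigma' \in K \setminus \{\emptyset\}$. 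To prove this vanishing, I will choose a non-face $I \not\in K$ disjoint from $\tau$ (for example, a minimal non-face of $K$ contained in $[n] \setminus \tau$). Lemma~\ref{lemma:prop2prime} makes $\mathcal{K}_I\{\emptyset\}$ acyclic in $\tw(\mathcal{D})$, so $\Hom_{\mathcal{D}}^\bullet(D_\tau, \mathcal{K}_I\{\emptyset\}) \simeq 0$. Expanding this as a bicomplex over subsets $I' \subseteq I$, every summand with $\emptyset \ne I' \subseteq I$ vanishes by Lemma~\ref{lemma:notcomp}, since such $I'$ is disjoint from and non-comparable to $\tau$; only the $I' = \emptyset$ column remains, and its acyclicity is exactly the required $\Hom_{\mathcal{D}}^\bullet(D_\tau, D_\emptyset) = 0$.

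The principal obstacle will be the reverse-inclusion case, and in particular the existence of a non-face of $K$ disjoint from $\tau$. Generically the minimal non-face argument suffices, but in pathological simplicial complexes where every non-face meets $\tau$ I will fall back on the naturality clause of~\ref{def:delmon3}, combined with an induction on $|\tau|$: by replacing $\mathcal{K}_I\{\emptyset\}$ with Koszul complexes $\mathcal{K}_I\{J\}$ carrying nontrivial $J$, previously established vanishings can be fed back into the spectral sequence for $\Hom_{\mathcal{D}}^\bullet(D_\tau, \mathcal{K}_I\{J\})$ to collapse it onto the single surviving term. Once the reverse case is handled, passage to twisted complexes is formal and the equivalence $D(i_K)$ follows.
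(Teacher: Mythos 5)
Your overall architecture is the same as the paper's: prove $i_K$ fully faithful by matching $\Hom$-spaces case by case, then combine generation of $\tw(\catmod{A_K})$ by $\mathcal{P}_K$ with Lemma~\ref{lemma:generate} to get the equivalence $D(i_K)$. The forward-inclusion case (via $\Delta$-fully-faithfulness) and the non-comparable case (via Lemma~\ref{lemma:notcomp}) are exactly the two cases the paper's proof treats. Where you diverge is the reverse-inclusion case $\sigma'\subsetneq\sigma$: the paper gives no argument for it, but rather reads the vanishing $\Hom_{\mathcal{D}}(D_\sigma,D_{\sigma'})=0$ for $\sigma\not\subseteq\sigma'$ directly into property~\ref{def:delmon1} — this reading is visible in the proof of Lemma~\ref{lemma:notcomp}, where ``$\Hom^\bullet_{\mathcal{D}}(D_\sigma,D_J)=0$ by property (1)'' is invoked precisely for $J\subsetneq\sigma'$ and $J\subsetneq\sigma$. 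You instead attempt to derive this vanishing from properties~\ref{def:delmon2} and \ref{def:delmon3}, and that derivation does not close.

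Concretely, two steps fail. First, the non-face $I\not\in K$ disjoint from $\tau$ need not exist: when $K$ is the full simplex on $[n]$ (the case $Y_K=\mathbb{C}^n$) there are no non-faces at all, so Lemma~\ref{lemma:prop2prime} supplies no acyclic Koszul complex, and your fallback with $\mathcal{K}_I\{J\}$ does not help since acyclicity there still requires $I\not\in K$. Indeed, for the full simplex one checks that properties~\ref{def:delmon2} and \ref{def:delmon3} as literally stated impose no constraint whatsoever on $\Hom_{\mathcal{D}}(D_{\{1\}},D_\emptyset)$, so no derivation from those axioms alone can exist; the vanishing must come from property~\ref{def:delmon1}. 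Second, even when a disjoint non-face $I$ exists, you invoke Lemma~\ref{lemma:notcomp} for the pairs $(\tau,I')$ with $\emptyset\ne I'\subseteq I$, but that lemma is stated, and its proof only runs, for simplices $\sigma,\sigma'\in K$, and subsets of a non-face need not be faces. The repair is not to prove this case but to adopt the reading of Definition~\ref{def:Kff} that the paper actually uses — $F$ induces an isomorphism on the $\Hom$-space of every comparable pair, so that $\Hom_{\mathcal{D}}(D_\sigma,D_{\sigma'})\cong\Hom_{\mathcal{C}_n}(C_\sigma,C_{\sigma'})=0$ when $\sigma'\subsetneq\sigma$ — after which your two remaining cases and the generation step reproduce the paper's proof verbatim.
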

\begin{proof} 
	By the definition of $A_K$ and of $K$-monomial categories,  $i_K$ is faithful. Also, the definition of a $\Delta$-fully faithful functor ensures that if $\sigma \subset \sigma^\prime$ for $\sigma, \sigma^\prime \in K$, then $i_K$ yields a bijection from $\text{Hom}_{\mathcal{P}_K} (P_\sigma , P_{\sigma^\prime})$ to $\text{Hom}_{\mathcal{D}} (D_\sigma , D_{\sigma^\prime})$. On the other hand, if $\sigma , \sigma^\prime \in K$ are not comparable, then the definition of $A_K$ gives us that $\text{Hom}_{\mathcal{P}_K} (P_\sigma , P_{\sigma^\prime}) = 0$ and Lemma~\ref{lemma:notcomp} ensures that $\text{Hom}_{\mathcal{D}} (D_\sigma , D_{\sigma^\prime}) = 0$. Thus $i_K$ is a fully faithful functor and gives an equivalence between $\mathcal{P}_K$ and the full subcategory of $\mathcal{D}$ consisting of the objects $\{D_\sigma : \sigma \in K  \}$. The category $\tw (\catmod{A_K})$ is generated by $\mathcal{P}_K$ and, by Lemma~\ref{lemma:generate}, the objects $\{D_\sigma : \sigma \in K  \}$ generate $\tw (\mathcal{D})$, implying $D(i_K)$ is the indicated quasi-equivalence of categories.
\end{proof}
This theorem indicates that there exists a unique $K$-monomial category,
so that verifying the properties in Definition \ref{def:delmon} uniquely determines
$\mathcal{D}$. 

\subsection{Local equivariant mirror symmetry}

In this section we consider a subcategory in both the $B$-model and $A$-model.
For any $\sigma \in K$, let 
\begin{align*} U_\sigma = \{z_i \ne 0 : i \not\in \sigma\}
\subset \mathbb{C}^n. \end{align*} Let $Y_K$ be the quasi-affine, toric variety in
$\mathbb{C}^n$ defined as
\begin{align*}
Y_K = \bigcup_{\sigma \in K} U_\sigma .
\end{align*}
Equivalently, $Y_K$ is the toric variety defined by the fan $\Sigma_K
= \{\cone (\{e_i : i \in \sigma\}) : \sigma \in K\}$ determined by $K$ with support in $\mathbb{R}^n$. 

We consider the equivariant $B$-model $D^{eq} (Y_K)$ of $(\mathbb{C}^*)^n$
equivariant sheaves on $Y_K$. For any character $\gamma \in \mathbb{Z}^n = \Hom
((\mathbb{C}^*)^n , \mathbb{C}^*)$, let $\mathcal{O} (\gamma )$ be the
associated equivariant line bundle. Let $\mathcal{D}_B$ be the full
subcategory of $D^{eq} (Y_K )$ containing the objects
\begin{align*}
\left\{ \mathcal{O} \left(\sum_{i \in I} e_i \right)
: I \subseteq [n] \right\}.
\end{align*}
Define the local equivariant $B$-model $D_{loc}^{eq} (Y_K )$ to be the
category generated by $\mathcal{D}_B$ in $D^{eq} (Y_K )$. 
\begin{lemma} \label{lemma:Bmon}
	$\mathcal{D}_B$ is a $K$-monomial category.
\end{lemma}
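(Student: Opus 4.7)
The plan is to verify each of the three axioms of Definition~\ref{def:delmon} for $\mathcal{D}_B$ using the toric geometry of $Y_K$. Define $F : \mathcal{C}_n \to \mathcal{D}_B$ on objects by $F(C_I) = \mathcal{O}(e_I)$ and on morphisms by sending the generator $e_{I_2 \setminus I_1}$ (for $I_1 \subseteq I_2$) to the equivariant morphism given by multiplication by the monomial $z^{e_{I_2 \setminus I_1}} = \prod_{i \in I_2 \setminus I_1} z_i$. Functoriality follows from $z^a \cdot z^b = z^{a+b}$.

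For property~\ref{def:delmon1}, I would identify the space of $T$-equivariant weight-zero morphisms $\Hom^0(\mathcal{O}(e_{I_1}), \mathcal{O}(e_{I_2}))$ with the regular functions on $Y_K$ that are $T$-homogeneous of weight $e_{I_2} - e_{I_1}$. Since $Y_K$ contains the dense torus $(\mathbb{C}^*)^n$, its coordinate ring embeds into the Laurent polynomial ring $\mathbb{C}[z_1^{\pm 1}, \ldots, z_n^{\pm 1}]$, and the only candidate of the prescribed weight is a scalar multiple of $z^{e_{I_2} - e_{I_1}}$. This monomial is regular on $Y_K$ precisely when its exponent vector is coordinate-wise non-negative, i.e.\ iff $I_1 \subseteq I_2$; in that case $F$ sends the one-dimensional space $\mathbb{K} \cdot e_{I_2 \setminus I_1}$ bijectively onto this span, establishing $\Delta$-fully faithfulness.

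For property~\ref{def:delmon2}, it suffices (by the distinguished triangle of Figure~\ref{fig:disttriangle}) to prove $F(\mathcal{K}_{I \cup \{k\}}\{J\}) \simeq 0$ in $\tw(\mathcal{D}_B)$ whenever $I \cup \{k\} \notin K$. Under $F$, the complex of equation~\eqref{eq:Koszdef} becomes (up to the twist $\otimes \mathcal{O}(e_J)$) the geometric Koszul complex on the functions $\{z_i : i \in I \cup \{k\}\}$ over $Y_K$, which is quasi-isomorphic to the derived restriction to the common zero-locus $V(I \cup \{k\}) \cap Y_K$ and hence acyclic when this locus is empty. Now $V(I \cup \{k\}) \cap U_\sigma = \emptyset$ unless $I \cup \{k\} \subseteq \sigma$, and closure of $K$ under subsets would then force $I \cup \{k\} \in K$; the hypothesis $I \cup \{k\} \notin K$ thus yields the required emptiness.

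Property~\ref{def:delmon3} is essentially immediate: $\mathcal{O}(e_{I \sqcup L}) = \mathcal{O}(e_I) \otimes \mathcal{O}(e_L)$, and tensoring with the invertible equivariant sheaf $\mathcal{O}(e_L)$ is an auto-equivalence of the dg-enhancement of $D^{eq}(Y_K)$, producing the desired natural quasi-isomorphism $\Hom^\bullet(\mathcal{O}(e_I), \mathcal{O}(e_J)) \simeq \Hom^\bullet(\mathcal{O}(e_{I \sqcup L}), \mathcal{O}(e_{J \sqcup L}))$, with naturality for comparable triples automatic from functoriality of the tensor product. I expect the main obstacle to be property~\ref{def:delmon2}: one must carefully match the algebraic Koszul complex of equation~\eqref{eq:Koszdef} with its geometric counterpart (with correct signs and twists) and then extract acyclicity from the combinatorial hypothesis $I \cup \{k\} \notin K$ via the affine open cover $\{U_\sigma\}_{\sigma \in K}$ of $Y_K$.
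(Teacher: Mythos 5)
Your argument follows essentially the same route as the paper: the naive monomial functor $F(C_I)=\mathcal{O}(e_I)$ for property~\ref{def:delmon1} (the paper delegates the section computation to \cite[Proposition~2.1, Theorem~2.6]{oda}), and for property~\ref{def:delmon2} the identification of $F(\mathcal{K}_{I\cup\{k\}}\{J\})$ with the geometric Koszul complex resolving the structure sheaf of the coordinate subspace $Z_{I\cup\{k\}}$, followed by the observation that $Z_{I\cup\{k\}}\cap U_\sigma=\emptyset$ for every $\sigma\in K$ because $K$ is closed under faces. You also supply the tensor-by-$\mathcal{O}(e_L)$ argument for property~\ref{def:delmon3}, which the paper's proof does not spell out; that is a welcome addition.

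The one genuine gap is in your verification of property~\ref{def:delmon1}. Since $\mathcal{D}_B$ is a full subcategory of $D^{eq}(Y_K)$, the morphism space $\Hom_{\mathcal{D}_B}(\mathcal{O}(e_{I_1}),\mathcal{O}(e_{I_2}))$ is the full graded equivariant Ext, and $\Delta$-fullness (surjectivity onto a space concentrated in degree $0$) forces $\Ext^m_{eq}(\mathcal{O}(e_{I_1}),\mathcal{O}(e_{I_2}))=0$ for $m>0$ whenever $I_1\subseteq I_2$. Your Laurent-polynomial argument only computes $\Hom^0$, i.e.\ global sections; but $Y_K$ is quasi-affine and generally not affine, so the higher cohomology of $\mathcal{O}$ need not vanish --- for $Y_K=\mathbb{C}^2\setminus\{0\}$ one has $H^1(Y_K,\mathcal{O})\cong\bigoplus_{a,b\geq 1}\mathbb{K}\cdot z_1^{-a}z_2^{-b}\neq 0$. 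What saves the claim is that, by the standard weight-space decomposition of the cohomology of equivariant line bundles on toric varieties (the content of the cited results of Oda), the higher cohomology is supported only in weights with some strictly negative coordinate, whereas the relevant weight $e_{I_2\setminus I_1}$ is coordinate-wise non-negative. You should add this weight-space vanishing argument (or a citation carrying it) to close the step; everything else in your proposal is sound.
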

\begin{proof}
	The proof of this fact follows from routine application of basic results in
	toric geometry. In particular, taking the naive  functor $F : \mathcal{C} \to \mathcal{D}_B$ sending $C_I$ to $\mathcal{O} (\sum_{i \in I} e_i)$, one may utilize \cite[Proposition~2.1, Theorem~2.6]{oda} to show that $F$ is a $\Delta$-fully faithful. To see that property~\ref{def:delmon2} of Definition~\ref{def:delmon} is satisfied, it suffices to show that $\mathcal{K}_I$ is acyclic for $I \not\in K$. Considering $\mathcal{O}(\sum_{i \in I} e_i)$ as  equivariant line bundles on $\mathbb{C}^n$ instead of $Y_K$, one observes that   $\mathcal{K}_I$ is the Koszul complex which resolves the equivariant structure sheaf on the coordinate subspace
	\begin{align*}
	Z_I := \{(z_1, \ldots, z_n) : z_i = 0, i \in I \}.
	\end{align*}
	However, it is clear that $Z_I \cap U_\sigma = \emptyset$ for all $\sigma \in K$ which implies that $Z_I \cap Y_K = \emptyset$. Thus the support of $\mathcal{K}_I$ on $Y_K$ is empty and $\mathcal{K}_I$ is consequently acyclic.
\end{proof}

The motivation for naming $D^{eq}_{loc} (Y_K )$ the \textit{local}
category stems from the description of the mirror $A$-model. The $A$-model mirror to $D^{eq} (Y_K )$ is anticipated to be the category
$\wrapped{U_{K_{eq}}}{H_{K_{eq}}}$ where $U_{K_{eq}}$ is defined  following \cite{fltz}. Consider the torus $\mathbb{T}^n = \mathbb{R}^n / \mathbb{Z}^n$ and the quotient map $\pi : T^* \mathbb{R}^n \to T^* \mathbb{T}^n$. The image $\pi(L_K)$ is then a singular Lagrangian in $T^* \mathbb{T}^n$ and its pre-image $L_{K_{eq}} := \pi^{-1} (\pi (L_K))$ is as well. Alternatively, one could take $L_{K_{eq}} = \cup_{m \in \mathbb{Z}^n} (m + L_K)$ to be the union of the integral translates of $L_K$. It is clear that the function $H_K$ can be defined naturally on a neighborhood of $\pi (L_K)$ in $T^* \mathbb{T}^n$. Precomposing with $\pi$ gives the Hamiltonian  $H_{K_{eq}}$ on the neighborhood $U_{K_{eq}}$ of $L_{K_{eq}}$. 

Now, for any integral point $m \in \mathbb{Z}^n \subset \mathbb{R}^n$, take $B (m, \delta)$ to be a radius $\delta$ ball around $m$. Then, for $\delta$ less than $1 - \varepsilon$, the intersection 
\begin{align} \label{eq:Uloc} U^{loc}_m := T^*B (m, \delta) \cap U_{K_{eq}} \end{align} is symplectomorphic to the neighborhood 
\begin{align} \label{eq:locmod}
U^{loc}_{K} = B (0, \delta) \cap U_K (\varepsilon).
\end{align}
Thus the wrapped Fukaya category $\wrsm{K}$ considered in Section~\ref{sec:local} describes
the local structure of $\wrapped{U_{K_{eq}}}{H_{K_{eq}}}$ near the most singular points of $L_{{K_{eq}}}$ (i.e. the integral points). 

Now suppose $K$ is defined on $[n]$ and, for every
$\mathbf{o} : [n] \to \{\pm 1\}$, choose a point $p (\mathbf{o} ) = (p_1,
\ldots, p_n) \in \mathbb{R}^n$ satisfying $p_k \in \mathbf{o} (k) \mathbb{R}_{>
	0}$. Let $\mathcal{D}_A$ be the full subcategory of $\wrsm{K}$ containing
the objects 
\begin{align*}
\left\{L_{p (\mathbf{o} )} | \mathbf{o} : [n] \to \{\pm 1\} \right\}.
\end{align*}
We have laid the foundation for the following result.

\begin{lemma} \label{lemma:Amon}
	$\mathcal{D}_A$ is a $K$-monomial category and $\tw (\mathcal{D}_A) \simeq
	\wrsm{K}$. 
\end{lemma}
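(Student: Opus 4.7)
The plan is to verify that $\mathcal{D}_A$ satisfies the three axioms of a $K$-monomial category in Definition~\ref{def:delmon} and then deduce $\tw(\mathcal{D}_A) \simeq \wrsm{K}$ by generation. I would define $F(C_I) := L^\perp_{p(\mathbf{o}_I)}$ with $\mathbf{o}_I(k) = -1$ if $k \in I$ and $+1$ otherwise, so that $I \subseteq J$ corresponds to $\mathbf{o}_I|_{\lk(\emptyset)} \preceq \mathbf{o}_J|_{\lk(\emptyset)}$. For $I \subsetneq J$ with distinct restrictions, Lemma~\ref{lemma:morphism1} supplies the distinguished generator $x_{\mathbf{o}_I, \mathbf{o}_J}$ of the one-dimensional Hom space; when the two restrictions coincide the orthants lie in the same component of $L^{sm}_\emptyset$ and Lemma~\ref{lemma:basicwr1} supplies a canonical isomorphism. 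Setting $F(e_{J\setminus I})$ to this class and invoking the composition formula of Lemma~\ref{lemma:morphism1}, $F$ is a $\Delta$-fully faithful $A_\infty$-functor, verifying property~\ref{def:delmon1}. Property~\ref{def:delmon3} would follow from the decomposability of $\mathbf{H}$ (Definition~\ref{def:decomposable}) together with Theorem~\ref{thm:functor}: splitting $L = L_1 \sqcup L_2$ with $L_1 = L \cap \lk(\emptyset)$ and $L_2 = L \setminus \lk(\emptyset)$, the $L_2$ summand contributes nothing because its elements lift to a common component of $L^{sm}_\emptyset$, while the $L_1$ summand gives the required quasi-isomorphism by the local product decomposition of $X_{H_K}$ near sufficiently displaced points, with naturality in comparable triples built into the splitting.

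The heart of the proof is property~\ref{def:delmon2}, that $F(\mathcal{K}_{I \cup \{k\}}\{J\}) \simeq 0$ whenever $I \in K$ and $I \cup \{k\} \notin K$. I would argue by induction on the ambient dimension $n$. If $\{k\} \notin K$, then for every $I' \subseteq I$ the sign patterns $\mathbf{o}_{I' \cup J}$ and $\mathbf{o}_{I' \cup J \cup \{k\}}$ restrict equally to $\lk(\emptyset)$, so each componentwise $F(e_k)$ is an isomorphism by Lemma~\ref{lemma:basicwr1}; hence $F(\mathbf{e}_k)$ is a quasi-isomorphism and the distinguished triangle of Figure~\ref{fig:disttriangle} forces vanishing. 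If $\{k\} \in K$, I apply Corollary~\ref{cor:extriangle} summand by summand to identify each $\mathrm{Cone}(F(e_k))$ with an orthogonal Lagrangian $L^\perp_{\mathbf{c}_0^{I'}}$ on the stratum $L_{\{k\}}$; the fully faithful functor $\iota_K^{\{k\}, \lk(\{k\})}$ of Theorem~\ref{thm:functor} then transports the entire complex $F(\mathcal{K}_{I \cup \{k\}}\{J\})$ to one constructed in $\mathcal{D}_A$ for $K|_{\lk(\{k\})}$. Writing $I = \tilde{I} \sqcup B$ with $\tilde{I} = I \cap \lk(\{k\})$ and $B = I \setminus \lk(\{k\})$, the Koszul differentials indexed by $B$ act as identity maps on the transported objects and thus contract the $B$-factor whenever $B \neq \emptyset$. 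When $B = \emptyset$ the transported complex is exactly $\mathcal{K}_{\tilde{I}}\{J \cap \lk(\{k\})\}$ in the smaller category, and $I \cup \{k\} \notin K$ translates to $\tilde{I} \notin K|_{\lk(\{k\})}$; since $|\lk(\{k\})| < n$, the induction hypothesis combined with Lemma~\ref{lemma:prop2prime} applied to $K|_{\lk(\{k\})}$ gives the vanishing.

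With $\mathcal{D}_A$ shown to be $K$-monomial, the equivalence $\tw(\mathcal{D}_A) \simeq \wrsm{K}$ follows by iterating Corollary~\ref{cor:extriangle} to build each generator $L^\perp_{p(\sigma, f)}$ from Proposition~\ref{prop:gencol} as a mapping cone of objects already in $\mathcal{D}_A$, so that $\mathcal{D}_A$ generates $\wrsm{K}$ and the full inclusion extends to an equivalence. The main obstacle is the inductive step in property~\ref{def:delmon2}, where one must carefully track how the componentwise cones from Corollary~\ref{cor:extriangle} assemble into a tensor of a contractible complex in the $B$-directions with a Koszul complex in the $\tilde{I}$-directions, matching signs and shifts with those of the Koszul complex on the smaller link category so that the induction hypothesis applies cleanly.
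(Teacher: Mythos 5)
Your toolkit is the paper's --- Lemmas~\ref{lemma:morphism1}, \ref{lemma:morphism2}, \ref{lemma:basicwr1}, \ref{lemma:basicwr2}, Corollary~\ref{cor:extriangle} and Theorem~\ref{thm:functor} --- and your generation argument at the end coincides with the paper's. The genuine divergence is in property~(2) of Definition~\ref{def:delmon}. The paper's argument is one step: iterating Corollary~\ref{cor:extriangle} over \emph{all} of $I$ at once identifies the entire complex $F(\mathcal{K}_I\{J\})$ with a single orthogonal Lagrangian $L_{p_0}^\perp$ based at a point of the deep stratum $L_I$, and likewise $F(\mathcal{K}_I\{J\cup\{k\}\})$ with $L_{p_1}^\perp$, where the $I$-positive sign patterns of $p_0$ and $p_1$ differ only at $k$; since $I\cup\{k\}\notin K$ means $k\notin \lk(I)$, Lemma~\ref{lemma:basicwr2} puts $p_0$ and $p_1$ in the same component of $L_I^{sm}$, and Lemma~\ref{lemma:basicwr1} makes $F(\mathbf{e}_k)=x_{\mathbf{c}_0,\mathbf{c}_1}$ an isomorphism. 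No induction on $n$ and no passage to a smaller category is needed. Your route --- cone off $F(\mathbf{e}_k)$ componentwise onto the stratum $L_{\{k\}}$, transport along $\iota_K^{\{k\},\cdot}$, contract the $B$-directions, and induct --- is geometrically sound but strictly more work, and you should be aware of what the direct argument buys: it avoids having to match signs and shifts of a tensor decomposition of Koszul complexes, which you yourself flag as the delicate point.

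One step of your induction is misstated. When $B=\emptyset$ you assert that $I\cup\{k\}\notin K$ ``translates to $\tilde I\notin K|_{\lk(\{k\})}$.'' But $\tilde I=I$ is a simplex of $K$ supported on the vertex set of $\lk(\{k\})$, so it \emph{does} belong to the restriction $K|_{\lk(\{k\})}$; what fails is membership of $\tilde I$ in the link $\lk(\{k\})$ as a simplicial complex (e.g.\ $K=\partial\Delta^2$, $k=1$, $I=\{2,3\}$). The local model near a sufficiently displaced point of $L_{\{k\}}$ is governed by the link, not the restriction --- this is the content of Lemma~\ref{lemma:prop1}, where the complex denoted $K|_I$ is characterized by $\tau\in K|_I\iff\tau\sqcup\sigma\in K$ --- so your induction hypothesis must be invoked for the complex $\lk(\{k\})$; with that substitution the step closes. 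Finally, note that neither you nor the paper really verifies property~(3) of Definition~\ref{def:delmon}: your sketch via decomposability and Theorem~\ref{thm:functor} is the right mechanism, but the stated splitting $L=L_1\sqcup L_2$ along $\lk(\emptyset)$ is vacuous (every singleton is a vertex of $K$, so $\lk(\emptyset)$ has vertex set $[n]$), and the required naturality in comparable triples still needs to be spelled out.
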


\begin{proof}
	The collection of objects $\{L_{p (\mathbf{o} )} | \mathbf{o} : [n] \to \{\pm
	1\} \} $ is indexed by $I \subseteq [n]$ by taking $\mathbf{o}_I (k)= 1$ if and
	only if $k \not\in I$ and letting $D_I = L_{p(\mathbf{o}_I)}$. It follows from
	Lemmas~\ref{lemma:morphism1} and \ref{lemma:morphism2} that this assignment on objects lifts to a $\Delta$-fully faithful functor. 
	
	For each $\sigma \in K$ and $k \in [n] \backslash \sigma$ with $\sigma
	\sqcup \{k \} \not\in K$, let $I = \sigma$ and $J \subset [n] \backslash (\sigma
	\cup \{k\})$. Let $\mathbf{c}_0, \mathbf{c}_1 :[n] \to \{\pm 1 , -i\}$ be $I$-positive functions, as defined in equation~\ref{eq:orthant}, with $\mathbf{c}^{-1}(-1) = J$ and $\mathbf{c}_1^{-1} (-1) = J \cup \{k\}$.  Take $p_0 \in L_{I, \mathbf{c}}$ and $p_1 \in L_{I, \mathbf{c}}$. Inductively applying corollary~\ref{cor:extriangle}, we have that $L_{p_0}^\perp$ and $L_{p_1}^\perp$ are quasi-equivalent to the Koszul complexes $\mathcal{K}_I\{J\}$ and $\mathcal{K}_I\{J \cup \{k\}\}$. Since $\sigma \sqcup \{k \} \not\in K$, Lemmas~\ref{lemma:basicwr1} and \ref{lemma:basicwr2} imply that the morphism $x_{\mathbf{c}_0,\mathbf{c_1}}$ is an isomorphism.
	This verifies property \eqref{def:delmon2} of Definition \ref{def:delmon} and the fact that $\mathcal{D}_A$ is a $K$-monomial category.
	
	The observation that $L_{p_0}^\perp$ is quasi-isomorphic to $\mathcal{K}_I \{J\}$, combined with Lemmas~\ref{lemma:basicwr1} and \ref{lemma:basicwr2} gives us that $\left\{ L_{p (\mathbf{o} )} | \mathbf{o} : [n] \to \{\pm 1\} \right\}$ generates $\wrsm{K}$. Thus, we may conclude $\tw (\mathcal{D}_A ) \simeq \wrsm{K} $.
\end{proof}
With these results in hand, the mirror symmetry theorem is evident.
\begin{theorem} \label{theorem:lhms}
	The categories $D^{eq}_{loc} (Y_K )$ and $\wrsm{K}$ are equivalent.
\end{theorem}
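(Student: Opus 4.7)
The plan is to deduce Theorem~\ref{theorem:lhms} as a straightforward consequence of the algebraic uniqueness result Theorem~\ref{thm:mainalg} applied to both sides, using the two previous lemmas which verify the axioms of a $K$-monomial category in each model.

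First I would apply Theorem~\ref{thm:mainalg} to the $B$-side. By Lemma~\ref{lemma:Bmon}, $\mathcal{D}_B$ is a $K$-monomial category, so the inclusion $i_K : \mathcal{P}_K \to \mathcal{D}_B$ extends to an equivalence $D(i_K) : \tw(\catmod{A_K}) \to \tw(\mathcal{D}_B)$. By the very definition of $D^{eq}_{loc}(Y_K)$ as the triangulated category generated by $\mathcal{D}_B$ inside $D^{eq}(Y_K)$, we identify $\tw(\mathcal{D}_B) \simeq D^{eq}_{loc}(Y_K)$, giving
\begin{equation*}
\tw(\catmod{A_K}) \simeq D^{eq}_{loc}(Y_K).
\end{equation*}

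Next I would apply Theorem~\ref{thm:mainalg} to the $A$-side. By Lemma~\ref{lemma:Amon}, $\mathcal{D}_A$ is a $K$-monomial category and moreover $\tw(\mathcal{D}_A) \simeq \wrsm{K}$ (this latter equivalence uses Proposition~\ref{prop:gencol}, which pins down a finite generating collection, together with the Koszul-resolution identification of each $L^\perp_{p(\mathbf{o})}$ coming from iterated application of Corollary~\ref{cor:extriangle}). Hence Theorem~\ref{thm:mainalg} gives
\begin{equation*}
\tw(\catmod{A_K}) \simeq \tw(\mathcal{D}_A) \simeq \wrsm{K}.
\end{equation*}
Composing the two equivalences, both with intermediate target $\tw(\catmod{A_K})$, yields the desired equivalence $D^{eq}_{loc}(Y_K) \simeq \wrsm{K}$.

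The main obstacle is not in this final assembly — that is essentially formal — but in the verifications already carried out in Lemmas~\ref{lemma:Bmon} and \ref{lemma:Amon}, and especially on the $A$-side, where property~\ref{def:delmon2} of Definition~\ref{def:delmon} requires that the Koszul complex $\mathcal{K}_I\{J\}$ becomes acyclic whenever $I \notin K$. In the proof proposal one should be careful that the functor $F$ sending $C_I$ to the appropriate $L^\perp_{p(\mathbf{o}_I)}$ is $\Delta$-fully faithful (from Lemmas~\ref{lemma:morphism1} and \ref{lemma:morphism2}), and that the isomorphism $F(\mathbf{e}_k) : F(\mathcal{K}_I\{J\}) \to F(\mathcal{K}_I\{J \cup \{k\}\})$ is obtained by unwinding the exact triangle of Corollary~\ref{cor:extriangle} — these are the inputs the present theorem simply packages. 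Given that, Theorem~\ref{theorem:lhms} is immediate.
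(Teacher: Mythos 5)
Your proposal is correct and follows essentially the same route as the paper, whose proof is exactly the citation of Lemmas~\ref{lemma:Bmon} and \ref{lemma:Amon} together with Theorem~\ref{thm:mainalg}; your write-up merely makes explicit the composition through $\tw(\catmod{A_K})$ and correctly locates the real work in the two lemmas rather than in the final assembly.
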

\begin{proof}
	This follows from Lemmas \ref{lemma:Bmon}, \ref{lemma:Amon} and Theorem~\ref{thm:mainalg}.
\end{proof}

Theorem~\ref{theorem:lhms} gives a local version of homological mirror symmetry for quasi-affine toric varieties $Y_K \subset \mathbb{A}^n$. To obtain a global theorem, one must takes two steps which we now sketch. First, consider the neighborhood $U_{K_{eq}}$ of the equivariant skeleton $L_{K_{eq}}$. The for $m \in \mathbb{Z}^n$, the embeddings $j_m : U_K^{loc} \to U_m^{loc} \subset U_{K_{eq}}$ from equations~\eqref{eq:Uloc} and \eqref{eq:locmod} yield a covering of $U_{K_{eq}}$. Given a collection of neighboring lattice elements $\{m_i\}_{i \in I}$, one checks that the intersection $\cap_{i \in I} U_{m_i}^{loc}$ has a partially wrapped Fukaya category whose local model is equivalent to $\mathbb{R}^{n - |J|} \times U_{K \backslash J}^{loc}$ for some subset $J \subset [n]$. This yields an $A$-model cosheaf $\mathcal{F}^{A}$ of partially wrapped Fukaya categories determined by the subbasis of open sets $\mathcal{B} = \{\cap_{i \in I} U_{m_i}^{loc} : \{m_i\} \subset \mathbb{Z}^n\}$. One defines the category of global sections, or homotopy colimit category, to be the equivariant $A$-model mirror
\begin{align}
\label{eq:Aeqdef} \wrsm{K_{eq}} = \Gamma (U_{K_{eq}}, \mathcal{F}^A).
\end{align}

There is also a $B$-model cosheaf $\mathcal{F}^B$ on $U_{K_{eq}}$. This cosheaf assigns to $U_{m}^{loc}$ the subcategory of $D^{eq} (Y_K)$ generated by the equivariant sheaves \[\left\{\mathcal{O} \left(m + \sum_{s \in S} e_s \right): S \subseteq [n] \right\}\] on $Y_K$. If the local $A$-model of $\cap_{i \in I} U_{m_i}^{loc}$ corresponds to the subcomplex $K \backslash J$, then one defines $\mathcal{F}^B  \left( \cap_{i \in I} U_{m_i}^{loc}\right)$ to be the subcategory generated by \[\left\{\mathcal{O} \left(m + \sum_{s \in S} e_s \right): S \subseteq [n] \backslash J \right\}. \] As the collection of equivariant line bundles generates $D^{eq} (Y_K)$, the equivalence
\begin{align}
\label{eq:Beqdef} \wrsm{K_{eq}} = \Gamma (U_{K_{eq}}, \mathcal{F}^B).
\end{align}

Utilizing Theorem~\ref{theorem:lhms} then yields equivariant homological mirror symmetry.
\begin{theorem}
	The categories $\wrsm{K_{eq}}$ and $D^{eq} (Y_K)$ are equivalent.
\end{theorem}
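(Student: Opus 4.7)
The plan is to descend the local equivalence of Theorem~\ref{theorem:lhms} to a morphism of cosheaves on $U_{K_{eq}}$ and then pass to global sections. Both sides of the desired equivalence are defined in equations \eqref{eq:Aeqdef} and \eqref{eq:Beqdef} as $\Gamma(U_{K_{eq}}, \mathcal{F}^\bullet)$ for $\bullet \in \{A, B\}$ relative to the subbasis $\mathcal{B} = \{\cap_{i \in I} U_{m_i}^{loc}\}$. So it suffices to construct a natural equivalence $\Phi : \mathcal{F}^A \xrightarrow{\simeq} \mathcal{F}^B$ of cosheaves on $\mathcal{B}$ and quote the fact that homotopy colimits preserve such equivalences.

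First I would define $\Phi$ on a basic open set $V = \cap_{i \in I} U_{m_i}^{loc}$. By the description of $\mathcal{F}^A$ and $\mathcal{F}^B$ given just before \eqref{eq:Aeqdef} and \eqref{eq:Beqdef}, both cosheaves evaluate on $V$ to a category whose generating objects are indexed by the same data: for $\mathcal{F}^A$, Lagrangian orthogonals $L_{p(\mathbf{o})}^\perp$ in a local model symplectomorphic to $\mathbb{R}^{n - |J|} \times U^{loc}_{K \backslash J}$; for $\mathcal{F}^B$, equivariant line bundles $\mathcal{O}(m + \sum_{s \in S} e_s)$ with $S \subseteq [n] \backslash J$. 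Via Lemmas~\ref{lemma:Bmon}, \ref{lemma:Amon}, both of these categories are $(K \backslash J)$-monomial, so Theorem~\ref{thm:mainalg} furnishes a canonical equivalence $\Phi_V : \mathcal{F}^A(V) \xrightarrow{\simeq} \mathcal{F}^B(V)$ between their twisted-complex completions. The stability axiom (Definition~\ref{def:stable}) ensures that the $\mathbb{R}^{n - |J|}$ factor contributes trivially on the $A$-side, so the monomial structure factors through $K \backslash J$ as required.

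Next I would verify that $\Phi$ is natural with respect to restriction maps. Given an inclusion $V \hookrightarrow V'$ of basic opens (corresponding to adding or removing lattice points), both cosheaves restrict by sending generators to generators: on the $A$-side by including local Lagrangian orthogonals, on the $B$-side by the inclusion of the indicated subcollection of equivariant line bundles. The key point is that, under the identification on generators built into the proof of Theorem~\ref{theorem:lhms}, these two restriction functors agree. This amounts to checking that the $\Delta$-fully faithful functors $F : \mathcal{C}_{n - |J|} \to \mathcal{F}^\bullet(V)$ are compatible with the inclusions of subcomplexes $K \backslash J \hookrightarrow K \backslash J'$ on both sides — on the $B$-side this is the compatibility of equivariant pullbacks along coordinate hyperplane embeddings; on the $A$-side it is exactly the naturality clause of Theorem~\ref{thm:functor} together with Proposition~\ref{prop:gencol}, which ensures that the generating Lagrangians and their morphism counts are unchanged under the embedding.

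Finally, with a natural equivalence of cosheaves $\Phi : \mathcal{F}^A \xrightarrow{\simeq} \mathcal{F}^B$ in hand, the induced map on global sections
\begin{equation*}
\Gamma(U_{K_{eq}}, \Phi) : \wrsm{K_{eq}} = \Gamma(U_{K_{eq}}, \mathcal{F}^A) \longrightarrow \Gamma(U_{K_{eq}}, \mathcal{F}^B) = D^{eq}(Y_K)
\end{equation*}
is an equivalence, since the homotopy colimit construction defining global sections preserves sectionwise quasi-equivalences. The main obstacle I anticipate is the naturality verification in the previous paragraph: while the local equivalence of Theorem~\ref{theorem:lhms} is essentially unique by Theorem~\ref{thm:mainalg}, one must exhibit compatible choices of representatives so that these uniquely determined equivalences assemble into a strict cosheaf morphism rather than merely a morphism up to coherent homotopy. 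The cleanest way to handle this is to identify both cosheaves with the common cosheaf of $(K \backslash J)$-monomial categories and invoke the uniqueness part of Theorem~\ref{thm:mainalg} — so the real work is to recognize that the restriction maps on both sides are the canonical ones determined by the functorial structure (Theorem~\ref{thm:functor} on the $A$-side, equivariant restriction on the $B$-side), after which compatibility is automatic.
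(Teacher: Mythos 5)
Your proposal follows essentially the same route as the paper: the paper defines both sides as global sections (homotopy colimits) of the cosheaves $\mathcal{F}^A$ and $\mathcal{F}^B$ over the subbasis $\mathcal{B}$ and then simply invokes Theorem~\ref{theorem:lhms} on each basic open to conclude. Your write-up supplies more detail than the paper does on the naturality of the local equivalences with respect to the corestriction maps --- correctly identifying the uniqueness statement of Theorem~\ref{thm:mainalg} and the naturality clause of Theorem~\ref{thm:functor} as the tools that make the locally defined equivalences assemble --- but this is an elaboration of the paper's sketch rather than a different argument.
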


To pass to the non-equivariant setting, one observes that both cosheaves $\mathcal{F}^A$ and $\mathcal{F}^B$ come with an equivariant action of $\mathbb{Z}^n$ and that the isomorphism of cosheaves respects this action. In particular, this implies that the orbit categories  (see \cite{keller}) of the global section categories admit an equivalence
\begin{align} \label{eq:orbeq}
\text{orb}_{\mathbb{Z}^n} (\wrsm{K_{eq}}) \cong \text{orb}_{\mathbb{Z}^n} \left( D^{eq} (Y_K) \right) .
\end{align} 
Using unique path lifting, one checks that the $\text{orb}_{\mathbb{Z}^n} (\wrsm{K_{eq}})$ is equivalent to the full subcategory $\mathcal{W}_{\mathbb{T}} (K) \subset \wrapped{\pi (U_{K_{eq}})}{H_{K_{eq}}}$ in the cotangent bundle of the torus $T^* \mathbb{T}^n$ generated by transverse Lagrangians to $\pi (L_K)$. Recall that $\pi (L_K) \subset T^* \mathbb{T}^n$ is the original Lagrangian skeleton proposed in \cite{fltz} as the mirror to $Y_K$. On the $B$-model side, it is easily shown that $\text{orb}_{\mathbb{Z}^n} \left( D^{eq} (Y_K) \right) \cong D (Y_K)$. Thus one obtains homological mirror symmetry for quasi-affine toric subvarieties $Y_K \subset \mathbb{A}^n$ as the formal corollary.
\begin{corollary}
	The categories $\mathcal{W}_{\mathbb{T}} (K)$ and $D (Y_K)$ are equivalent.
\end{corollary}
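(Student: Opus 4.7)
The plan is to deduce the corollary directly from the equivariant equivalence $\wrsm{K_{eq}} \simeq D^{eq}(Y_K)$ established in the preceding theorem by passing to $\mathbb{Z}^n$-orbit categories on both sides and then identifying each orbit category with the stated non-equivariant category. Concretely, I would first observe that $\mathbb{Z}^n$ acts on the cosheaves $\mathcal{F}^A$ and $\mathcal{F}^B$ by translation of lattice points, that the local-to-global equivalences \eqref{eq:Aeqdef} and \eqref{eq:Beqdef} intertwine these actions, and therefore that the isomorphism of cosheaves produced from Theorem~\ref{theorem:lhms} descends to an equivalence $\text{orb}_{\mathbb{Z}^n}(\wrsm{K_{eq}}) \cong \text{orb}_{\mathbb{Z}^n}(D^{eq}(Y_K))$ as in~\eqref{eq:orbeq}.

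Next I would identify the two orbit categories with the categories in the statement. On the $A$-side, the quotient $\pi : T^*\mathbb{R}^n \to T^*\mathbb{T}^n$ carries $L_{K_{eq}}$ to $\pi(L_K)$ and $U_{K_{eq}}$ to a neighborhood $\pi(U_{K_{eq}})$ of $\pi(L_K)$. The generating Lagrangian orthogonals $L_p^\perp$ from Section~\ref{sec:local} project to cotangent fiber-like Lagrangians transverse to $\pi(L_K)$, and any two integral translates $L_{p+m}^\perp$ and $L_p^\perp$ project to the same object. I would argue that the obvious functor from $\wrsm{K_{eq}}$ to $\mathcal{W}_{\mathbb{T}}(K)$ factors through the $\mathbb{Z}^n$-orbit category and that the resulting functor $\text{orb}_{\mathbb{Z}^n}(\wrsm{K_{eq}}) \to \mathcal{W}_{\mathbb{T}}(K)$ is an equivalence. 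Essential surjectivity follows because the generators for $\mathcal{W}_{\mathbb{T}}(K)$ are transverse Lagrangians to $\pi(L_K)$, each of which lifts to some $L_p^\perp$; fully-faithfulness follows because the relevant Floer trajectories and $I_{\mathbf{H}}$-holomorphic discs for $H_{K_{eq}}$ lift uniquely to $T^*\mathbb{R}^n$ once an endpoint lift is fixed, so morphism spaces in the downstairs category decompose as direct sums over $\mathbb{Z}^n$-orbits of morphism spaces upstairs. This unique path lifting (together with its disc analogue) is the main technical content of this step, and is the step I expect to require the most care because one must verify that there are no extra holomorphic discs created by nontrivial deck translations and that the Hamiltonian $H_{K_{eq}}$ is genuinely $\mathbb{Z}^n$-invariant and convex downstairs so that Theorem~\ref{thm:exist} applies to $\pi(U_{K_{eq}})$.

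On the $B$-side, the identification $\text{orb}_{\mathbb{Z}^n}(D^{eq}(Y_K)) \simeq D(Y_K)$ is a standard fact about equivariant derived categories for a free torus action by characters: the forgetful functor $D^{eq}(Y_K) \to D(Y_K)$ sends $\mathcal{O}(\gamma)$ and $\mathcal{O}(\gamma + m)$ to isomorphic objects for $m \in \mathbb{Z}^n$, so it factors through the orbit category, and on a toric variety the equivariant line bundles $\{\mathcal{O}(\gamma)\}_{\gamma \in \mathbb{Z}^n}$ all become isomorphic to the structure sheaf after forgetting equivariance while their non-equivariant morphism spaces recover the equivariant morphisms summed over the lattice. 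Since these equivariant line bundles generate $D^{eq}(Y_K)$, the induced functor from the orbit category to $D(Y_K)$ is essentially surjective and fully faithful.

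Combining the three equivalences
\begin{equation*}
\mathcal{W}_{\mathbb{T}}(K) \simeq \text{orb}_{\mathbb{Z}^n}(\wrsm{K_{eq}}) \simeq \text{orb}_{\mathbb{Z}^n}(D^{eq}(Y_K)) \simeq D(Y_K)
\end{equation*}
yields the corollary. The chief obstacle, as noted above, is the unique path lifting step on the $A$-side: one must promote $\mathbb{Z}^n$-equivariance at the level of generating objects and Hamiltonian dynamics to an equivariance statement for the full $A_\infty$-structure, so that taking orbit categories commutes with the cosheaf of global sections construction.
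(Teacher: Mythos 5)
Your proposal is correct and follows essentially the same route as the paper: both pass to $\mathbb{Z}^n$-orbit categories of the equivariant equivalence, identify $\text{orb}_{\mathbb{Z}^n}(\wrsm{K_{eq}})$ with $\mathcal{W}_{\mathbb{T}}(K)$ by unique path lifting, and identify $\text{orb}_{\mathbb{Z}^n}(D^{eq}(Y_K))$ with $D(Y_K)$ via the forgetful functor. You simply supply more detail at each step than the paper's brief sketch does.
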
 

\subsection{The punctured plane}
\begin{figure}[h]
	\includegraphics{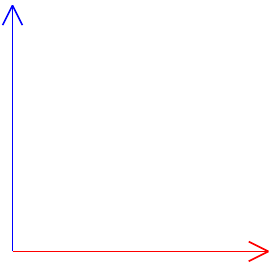}
	\caption{\label{fig:punfan} The fan for $\mathbb{C}^2 \backslash \{0\}$ }
\end{figure}
We conclude with the explicit example of the mirror of $\mathbb{C}^2 \backslash \{0\}$. The fan for $Y_K$ consists of the two rays in Figure~\ref{fig:punfan} without the positive quadrant. The simplicial set $K$ with vertex set $\{1, 2\}$ simply consists of the vertices $\{1\}, \{2\}$ and the empty set $\emptyset$. In contrast, the fan for $\mathbb{C}^2$ contains these cones and the positive quadrant and the simplicial set is an interval. There are three mirror skeleta to be considered. The first is 
\begin{align} L_K = L_\emptyset \cup L_1 \cup L_2  \subset \mathbb{R}^2 \end{align} which is the union  of the three  Lagrangian strata indexed by sets in $K$ and illustrated in Figure~\ref{fig:local}. 
\begin{figure}[h]
	\includegraphics{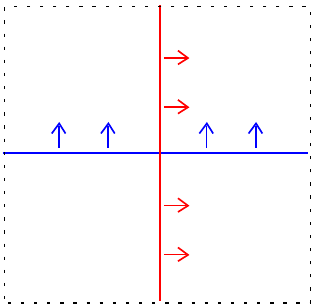}
	\caption{\label{fig:local} The local mirror Lagrangian skeleton. }
\end{figure}
In the figure, the plane itself corresponds to the zero section of $T^* \mathbb{R}^2$ and $L_\emptyset$, while the red (respectively blue) strata corresponds to $L_1$ (respectively $L_2$).  The arrows indicate the positive cotangent directions which the strata span.  The second is the equivariant skeleton $L_{K_{eq}} \subset T^* \mathbb{R}^2$ which is the union of $\mathbb{Z}^2$ translates of $L_K$ and the third is $\pi (L_K) = \pi (L_{K_{eq}})$ lying in the cotangent bundle $T^* \mathbb{T}^2$. Both of these are illustrated in Figure~\ref{fig:equivariant}.
\begin{figure}[h]
	\begin{picture}(0,0)%
	\includegraphics{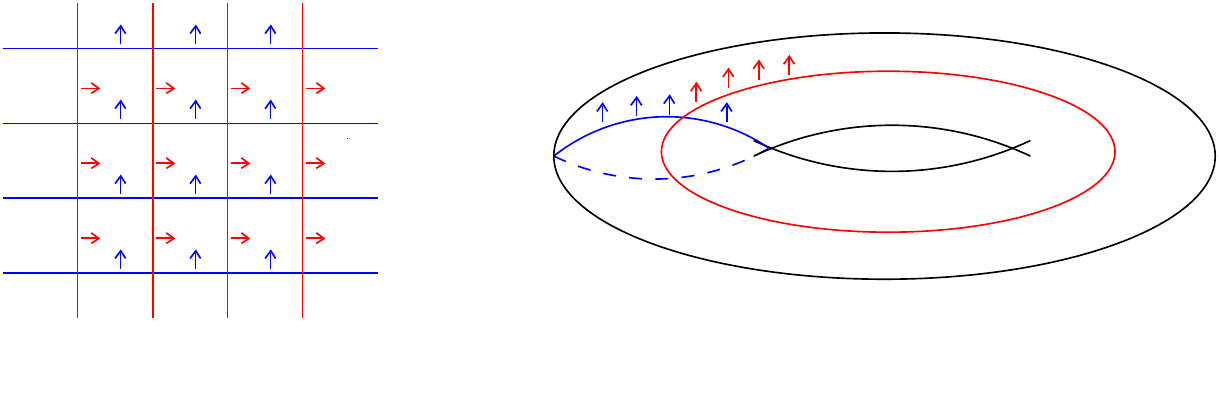}%
	\end{picture}%
	\setlength{\unitlength}{4144sp}%
	\begin{picture}(6905,2335)(3589,-4139)
	\put(7111,-4066){\makebox(0,0)[lb]{$\pi (L_K) \subset T^*\mathbb{T}^2$}}
	\put(3991,-4066){\makebox(0,0)[lb]{$L_{K_{eq}} \subset T^*\mathbb{R}^2$}}
	\end{picture}%
	\caption{\label{fig:equivariant}The equivariant and orbit skeleta.}
\end{figure}
Let us examine the category $\wrsm{K}$ by first considering the generating set from Proposition~\ref{prop:gencol}. Thus we consider the connected components of $L_K^{sm} = L_\emptyset^{sm} \sqcup L_1^{sm} \sqcup L_2^{sm}$. Using Lemma~\ref{lemma:basicwr2}, one has that $L_1^{sm}$ and $L_2^{sm}$ are connected components while $L_\emptyset^{sm}$ breaks up into the four quadrants $Q_I$ indexed by subsets $I \subseteq \{1,2\}$. Here the quadrant is given by $Q_I = \{(x_1, x_2): x_i < 0 \text{ iff } i \in I \}$. Write $D_I$ for the orthgonal Lagrangian $L_{p_I}^\perp$ where $p_I \in Q_I$. Write $E_1$ and $E_2$ for the orthogonal Lagrangians to $L_1$ and $L_2$ respectively. Lemma~\ref{lemma:Amon} gives that the category consisting of the objects $\{D_I : I \subseteq \{1,2\} \}$ is a $K$-monomial category. And by Theorem~\ref{thm:functor} for $I = \{1\}$ or $\{2\}$ we have that $E_i$ is the cone of $D_\emptyset \stackrel{\mathbf{e_i}}{\longrightarrow} D_{\{i\}}$ for $i = 1, 2$.

The $B$-model categories in this instance are the local equivariant derived category of $D^{eq}_{loc} (\mathbb{C}^2 \backslash \{0\})$ (mirror to $\wrsm{K}$), the equivariant derived category $D^{eq} (\mathbb{C}^2 \backslash \{0\})$ (mirror to $\wrsm{K_{eq}}$) and the usual derived category $D (\mathbb{C}^2 \backslash \{0\})$ of coherent sheaves (mirror to $\mathcal{W}_\mathbb{T} (K)$). We focus only the first category and observe some features of Theorem~\ref{theorem:lhms}. This category is generated by the equivariant sheaves $\mathcal{O} (0,0)$, $\mathcal{O} (1, 0 )$, $\mathcal{O} (0, 1 )$ and $ \mathcal{O} (1, 1 )$. The quiver with relations of morphisms between these objects is illustrated in Figure~\ref{fig:quiv} where the sole relation is  $e_1 e_2 = e_2 e_1$. 

\begin{figure}[h]
	\begin{tikzpicture}[cross line/.style={preaction={draw=white, -, line width=6pt}}, scale=2.3]
	\node (A) at (-.3,0) {$\mathcal{O} (0,0)$};
	\node (B) at (1,.5) {$\mathcal{O} (1, 0 )$};
	\node (C) at (1,-.5) {$\mathcal{O} (0, 1 )$};
	\node(D) at (2.3,0) {$\mathcal{O} (1, 1 )$};
	\path[->,font=\scriptsize]
	(A) edge node[above] {$e_1$} (B)
	(A) edge node[above] {$e_2$} (C)
	(D) edge node[above] {$f[1]$} (A)
	(C) edge node[above] {$e_1$} (D)
	(B) edge node[above] {$e_2$} (D);
	\end{tikzpicture}
	\caption{\label{fig:quiv} Quiver with relations for $D^{eq}_{loc} (\mathbb{C}^2 \backslash \{0\})$.}
\end{figure}
The mirror collection $D_{\emptyset}, D_{\{1\}}, D_{\{2\}}$ and $D_{\{1,2\}}$ have geodesic paths which give the morphisms mirror to $e_i$ in the quiver. However, it is by no means obvious that the morphism $f[1]$ should exist in the mirror category $\wrsm{K}$. Our approach has used the algebraic characterization of $K$-monomial categories. In this case, this amounts to observing that  $E_1$ is the cone of $D_\emptyset \stackrel{\mathbf{e_1}}{\longrightarrow} D_{\{1\}}$ and also the cone $D_{\{2\}} \stackrel{\mathbf{e_1}}{\longrightarrow} D_{\{1,2\}}$. The morphism $e_2 : E_1 \to E_1$ is an isomorphism and shows that the Koszul complex $\mathcal{K} (\{1,2\})$ is exact so that $D_{\{1,2\}}$ is isomorphic to the twisted complex $(D_\emptyset[-1] \oplus D_{\{1\}} \oplus D_{\{2\}}, \delta)$. Thus $\Hom_{\wrsm{K}} (D_{\{1,2\}}, D_\emptyset ) = \Hom_{\wrsm{K}} (D_\emptyset , D_\emptyset) [1] = \mathbb{K} [1]$ and we observe the morphism mirror to $f[1]$. 

\bibliography{mybib}{}
\bibliographystyle{plain}
\end{document}